\newtheorem{thm}{Theorem}[section]
\newtheorem{cor}[thm]{Corollary}
\newtheorem{lmm}[thm]{Lemma}
\newtheorem{prop}[thm]{Proposition}
\theoremstyle{definition}
\theoremstyle{remark}
\newtheorem{rem}[thm]{Remark}
\numberwithin{equation}{section}
\newcounter{cc}
\def\Po{{\mathscr{P}}}
\def\dtv{d_{T\!V}}
\def\dkl{d_{K\!L}}
\def\dd#1{{\rm d}#1}
\def\le{\leqslant}
\def\ge{\geqslant}
\def\dd#1{{\mathrm d}#1}
\def\ve{\varepsilon}
\title{A Charlier-Parseval approach to Poisson approximation\\ and its
applications}
\author{Vytas Zacharovas, Hsien-Kuei Hwang \\
        Institute of Statistical Science\\
        Academia Sinica\\ Taipei 115\\ Taiwan}
\date{\today}
\begin{document}
\maketitle


\begin{abstract}
A new approach to Poisson approximation is proposed. The basic idea
is very simple and based on properties of the Charlier polynomials
and the Parseval identity. Such an approach quickly leads to new
effective bounds for several Poisson approximation problems. A
selected survey on diverse Poisson approximation results is also
given.
\end{abstract}

\noindent \emph{MSC 2000 Subject Classifications}: Primary 62E17;
secondary 60C05 60F05.

\noindent \emph{Key words}: Poisson approximation, total variation
distance, Wasserstein distance, Kolmogorov distance, point metric,
Kullback-Leibler distance, $\chi^2$-distance, Charlier polynomials,
Parseval identity, Tauberian theorems

\maketitle

\section{Introduction}

Poisson approximation to many discrete distributions (notably the
Poisson-binomial distribution) has received extensive attention in
the literature and many different approaches have been proposed. The
main problem is to study the closeness between the discrete
distribution in question and a suitably chosen Poisson distribution.
Applications in diverse problems also stimulated much of its recent
interest among probabilists and scientists in applied disciplines.
We propose in this paper a new, self-contained approach to Poisson
approximation, which leads readily to many new effective bounds for
several distances studied before, including total variation,
Kolmogorov, Wasserstein, Kullback-Leibler, point metric, and
$\chi^2$; see below for more information and references. In addition
to the application to these distances, we also attempt to survey
most of the quantitative results we collected for the Poisson
approximation distances discussed in this paper.

\subsection{A historical account with brief review of results}

We start with a brief historical account of Poisson approximation,
focusing particular on the evolution of the total variation
distance; a more detailed, technical discussion will be given in
Section~\ref{sec:cd}. For other surveys, see
\cite{Haight67,BHJ92,Barbour01,CDM05,Roos07}.

\paragraph{The early history of Poisson approximation.} Poisson
distribution appeared naturally as the limit of the sum of a large
number of independent trials each with very small probability of
success. Such a limit form, being the most primitive version of
Poisson approximation, dates back to at least de Moivre's work
\cite{de-Moivre56} in the early eighteenth century and Poisson's
book \cite{Poisson37} in the nineteenth century. Haight
\cite{Haight67} writes: ``\dots although Poisson (or de Moivre)
discovered the mathematical expression (1.1-1) [which is
$e^{-\lambda}\lambda^k/k!$], Bortkiewicz discovered the probability
distribution (1.1-1)." And according to Good \cite{Good86},
``perhaps the Poisson distribution should have been named after von
Bortkiewicz (1898) because he was the first to write extensively
about rare events whereas Poisson added little to what de Moivre had
said on the matter and was probably aware of de Moivre's work;" see
also Seneta's account in \cite{Seneta83} on Abbe's work. In addition
to Bortkiewicz's book \cite{Bortkiewicz98}, another important
contribution to the early history of Poisson approximation was made
by Charlier \cite{Charlier05} for his type B expansion, which will
play a crucial role in our development of arguments.

The next half a century or so after Bortkiewicz and Charlier then
witnessed an increase of interests in the properties and
applications of the Poisson distribution and Charlier's expansion.
In particular, Jordan \cite{Jordan26} proved the orthogonality of
the Charlier polynomials with respect to the Poisson measure, and
considered a formal expansion pair, expressing the Taylor
coefficients of a given function in terms of series of Charlier
polynomials and vice versa. A sufficient condition justifying the
validity of such an expansion pair was later on provided by Uspensky
\cite{Uspensky31}; he also derived very precise estimates for the
coefficients in the case of binomial distribution. His
complex-analytic approach was later on extended by Shorgin
\cite{Shorgin77} to the more general Poisson-binomial distribution
(each trial with a different probability; see next paragraph).
Schmidt \cite{Schmidt33} then gives a sufficient and necessary
condition for justifying the Charlier-Jordan expansion; see also
Boas \cite{Boas49} and the references therein. Prohorov
\cite{Prohorov53} was the first to study, using elementary
arguments, the total variation distance between binomial and Poisson
distributions, thus upgrading the classical limit theorem to an
approximation theorem.

\paragraph{From classical to modern.} However, a large portion of the
development of modern theory of Poisson approximation deviates
significantly from the classical line, and much of its modern
interest can be attributed to the pioneering paper by Le Cam
\cite{LeCam60}, extending the previous study by Prohorov
\cite{Prohorov53} for binomial distribution. Le Cam considered
particularly the sum $S_n$ of $n$ independent Bernoulli random
variables with parameters $p_1,p_2,\dots, p_n$, respectively, and
proved that the total variation distance
\[
    \dtv(\mathscr{L}(S_n),\Po(\lambda)) := \frac12
    \sum_{j\ge0} \left|\mathbb{P}(S_n=j) - e^{-\lambda}
    \frac{\lambda^j}{j!}\right|
\]
between the distribution of $S_n$ (often referred to as the
Poisson-binomial distribution) and that of a Poisson with mean
$\lambda := \sum_{1\le j\le n} p_j$ is bounded above by
\[
    \dtv(\mathscr{L}(S_n),\Po(\lambda)) \le 8\theta,
\]
whenever $p_* := \max_j p_j\le 1/4$, where $\theta :=
\lambda_2/\lambda$, $\lambda_2 := \sum_{1\le j\le n} p_j^2$. He also
proved in the same paper the following inequality, now often
referred to under his name,
\begin{align} \label{lecam-ineq}
    \dtv(\mathscr{L}(S_n),\Po(\lambda)) \le \lambda_2.
\end{align}
These results were later on further improved in the literature and
the approach he used became the source of developments of more
advanced tools; see Table~\ref{tb1} for a selected list of known
results of the simplest form $\dtv \le c \theta$.

\begin{table}[!h]
\begin{center}
\begin{tabular}{|c|c|c|c|c|}\hline
    Author(s) & Year & $\dtv\le $ & Assumption &
    Approach \\ \hline\hline
    Le Cam & 1960 & $ 8\theta$ & $p_*\le \frac14$
    & Operator and Fourier \\ \hline
    Kerstan & 1964 & $ 1.05\theta$ & $p_*\le \frac14$
    & Operator and Fourier \\ \hline
    Chen  & 1974 & $ 5\theta$ &  & Chen-Stein \\ \hline
    Barbour and Hall & 1984 & $ \theta$
    & & Chen-Stein\\ \hline
    Presman & 1985 & $ 2.08\theta$ & & Fourier\\ \hline
    Daley and Vere-Jones & 1988 & $ 0.71\theta$ &
    $p_*\le \frac14$ & Fourier \\ \hline
\end{tabular}
\end{center}
\label{tb1} \caption{\emph{Some results of the form
$\dtv:=\dtv(\mathscr{L}(S_n),\Po(\lambda)) \le c\theta$. Here
$\theta := \lambda_2/\lambda$ and $p_* := \max_j p_j$. It is known
that $\dtv(\mathscr{L}(S_n),\Po(\lambda)) \sim \theta/\sqrt{2\pi e}$
when $\theta\to0$; see Deheuvels and Pfeifer \cite{DP88} or Hwang
\cite{Hwang99}. Numerically, $1/\sqrt{2\pi e} \approx 0.242$.}}
\end{table}

Form Table~\ref{tb1}, we should point out that the leading constant
in the first-order estimate for $\dtv$ is often less important than
the generality of the approach used, although the pursuit for
optimal leading constant is of independent interest \emph{per se}.
One reason is that if an approach is quickly amended for obtaining
higher-order estimates, then one can push the calculations further
by obtaining more terms in the asymptotic expansions with smaller
and smaller errors, so that the implied constants in the error terms
matter less (the derivation of which often involves detailed
calculus).

On the other hand, estimates for the total variation distance
between the distribution of $S_n$ and a suitably chosen Poisson
distribution has been the subject of many papers in the last five
decades. Other forms in the literature include $\dtv \le
\varphi(\theta)$, $\dtv\le \varphi(\theta,\max_j p_j)$, $\dtv \le
\varphi(\theta,\lambda)$, $\dots$, for certain functionals $\varphi$
($\varphi$ not the same for each occurrence). Thus it is often
difficult to compare these results; further complications arise
because some metrics are related to others by simple inequalities
and the results for one can be transferred to the others; also the
complexity of the diverse methods of proof is not easily compared.
Despite these, we quickly review those that are pertinent to ours, a
more detailed, technical comparative discussion for some of these
will be given later; the special case of binomial distribution will
however not be compared separately; see, for example, Prohorov
\cite{Prohorov53}, Vervaat \cite{Vervaat69}, Romanowska
\cite{Romanowska77}, Matsunawa \cite{Matsunawa82}, Pfeifer
\cite{Pfeifer85}, Kennedy and Quine \cite{KQ89}, Poor \cite{Poor91}.

Kerstan \cite{Kerstan64} refined some results of Le Cam
\cite{LeCam60} on $\dtv$ by a similar approach. He also derived a
second-order estimate. Herrmann \cite{Herrmann65} further extended
results in Kerstan \cite{Kerstan64} in two directions: to \emph{sums
of random variables each assuming finitely many integer values} and,
in addition to higher-order estimates from the Charlier expansion,
to \emph{signed measures} whose generating functions are of the
forms $\exp(\sum_{1\le j\le s} (-1)^{j-1}\lambda_j (z-1)^j/j)$. We
will comment on Kerstan's and Herrmann's second-order estimates
later. As far as we are aware, Herrmann \cite{Herrmann65} was the
first to use such signed measures for Poisson approximation
problems, although such approximations are later on referred to as
Kornya-Presman or Kornya-type approximations, the two references
being Kornya \cite{Kornya83} and Presman \cite{Presman83}. Note that
the idea of using other signed measures (binomial) were already
discussed in Le Cam \cite{LeCam60}. Serfling \cite{Serfling74}
extended Le Cam's inequality (\ref{lecam-ineq}) to dependent cases;
see also \cite{Serfling78}. Chen \cite{Chen75} proposed a new
approach to Poisson approximation, based on Stein's method of normal
approximation (see Stein \cite{Stein86}).

From 1980 on, most of the approaches proposed previously for Poisson
approximation problems received much more attention and were further
developed and refined. Among these, \emph{the Chen-Stein method}
(with or without \emph{couplings}) is undoubtedly the most widely
used and the most fruitful one. It is readily amended for dealing
with dependent situations, but leads usually to less precise bounds
for numerical purposes. On the other hand, direct or indirect
classical \emph{Fourier analysis}, although involving less
probability ingredient and relying on more explicit forms of
generating functions, often gives better numerical bounds. For these
and other approaches (including \emph{semigroup} with Fourier
analysis, \emph{information-theoretic}), see Deheuvels and Pfeifer
\cite{DP86a}, Stein \cite{Stein86}, Aldous (1989), Barbour et al.\
\cite{BHJ92}, Steele \cite{Steele94}, Janson \cite{Janson94}, Roos
\cite{Roos99,Roos01}, Kontoyiannis et al.\ \cite{KHJ05} and the
references therein.

\subsection{Our new approach}

The new approach we are developing in this paper starts from the
integral representation for a given sequence $\{A_n\}_{n\ge0}$
(satisfying certain conditions specified in the next section)
\begin{align} \label{CP0}
    \sum_{n\ge0}\left|\frac{A_n}{e^{-\lambda}
    \frac{\lambda^n}{n!}}\right|^2e^{-\lambda}
    \frac{\lambda^n}{n!} =\int_0^\infty
    e^{-r}I(\sqrt{r/\lambda})\,dr,
\end{align}
where $\lambda>0$ and
\[
    I(r):=\frac{1}{2\pi}\int_{-\pi}^\pi\left
    |e^{-\lambda re^{it}}\sum_{j\ge0}
    A_j(1+re^{it})^j\right|^2\,dt.
\]
Note that $I(r) = \sum_{n\ge0} |a_n|^2 r^{2n}$, where $a_n$ denotes
the coefficient of $z^n$ in the Taylor expansion of $e^{-\lambda
z}\sum_{j\ge0} A_j (1+z)^j$. This means that (\ref{CP0}) can be
written in the form
\[
    \sum_{n\ge0}\frac{|A_n|^2}{e^{-\lambda}
    \frac{\lambda^n}{n!}}= \sum_{n\ge0} |a_n|^2
    \frac{n!}{\lambda^n},
\]
which, as far as we are aware, already appeared in the paper
Pollaczek-Geiringer \cite{PG28}, but no further use of it has been
discussed; see also Jacob \cite{Jacob33}, Schmidt \cite{Schmidt33},
Siegmund-Schultze \cite{SS93} and the references cited there. Also
the series on the right-hand side is in almost all cases we are
considering less useful than the integral in \ref{CP0}.

The seemingly strange and complicated starting point (\ref{CP0})
turns out to be very useful for developing effective tools for most
Poisson approximation problems. Other ingredients required are
surprisingly simple, with very little use of complex analysis. A
typical result is of the form
\[
    \dtv (\mathscr{L}(S_n),\Po(\lambda)) \le
    \frac{(\sqrt{e}-1)\theta}{\sqrt{2}(1-\theta)^{3/2}},
\]
where $(\sqrt{e}-1)/\sqrt{2}\approx 0.46$; see Theorem~\ref{thm-P0}.
The relation (\ref{CP0}), which will be proved below, is based on
the orthogonality of Charlier polynomials and Parseval identity;
thus we call it \emph{the Charlier-Parseval identity}.

Other features of our approach are: first, it reduces the estimate
of the probability distances to that of certain integral
representations with a similar form to the right-hand side of
(\ref{CP0}), and thus being of certain Tauberian character; second,
it can be readily extended to derive asymptotic expansions; third,
the use of the correspondence between Charlier polynomials and
Poisson distribution can be quickly amended for other families of
orthogonal polynomials and their corresponding probability
distributions; fourth, the same idea used applies equally well to
the de-Poissonization procedure, and leads to some interesting new
results, details being discussed elsewhere.

\paragraph{Organization of the paper.} This paper is organized as
follows. We begin with the development of our approach in the next
section. Then except for Section~\ref{sec:cd}, which is focusing on
reviewing and comparing with known results, the next three sections
consist of applications of our Charlier-Parseval approach:
Section~\ref{sec:App1} to several distances of Poisson approximation
to $S_n$ for large $\lambda$, Section~\ref{sec:App2} to second order
estimates, Section~\ref{sec:App3} to approximations by signed
measures.

\section{The new Charlier-Parseval approach}

Crucial to the development our approach is the use of Charlier
polynomials, so we first derive a few properties of Charlier
polynomials we will need.

\subsection{Definition and basic properties of Charlier polynomials}
The Charlier polynomials $C_k(\lambda,n)$ are defined by
\begin{equation}\label{def-char}
    \sum_{n\ge0}C_k(\lambda,n) \frac{\lambda^n}{n!}\,z^n
    =(z-1)^ke^{\lambda z}\qquad(k=0,1,\dots).
\end{equation}
Multiplying both sides by $z-1$, we see that
\begin{equation}\label{diff-eq}
    \frac{\lambda^{n-1}}{(n-1)!}C_k(\lambda,n-1)
    -\frac{\lambda^n}{n!}C_k(\lambda,n)
    =\frac{\lambda^n}{n!}C_{k+1}(\lambda,n),
\end{equation}
which implies that the Charlier polynomials
$\varphi_k(n):=C_{k}(\lambda,n)$ are solutions to the system of
difference equations $x\varphi_k(x-1)-\lambda \varphi_k(x)=\lambda
\varphi_{k+1}(x)$, with the initial condition $\varphi_0(x)\equiv
1$. In particular,
\begin{align} \label{C1C2}
    C_1(\lambda,n) = \frac{n-\lambda}{\lambda}\quad \text{and}
    \quad C_2(\lambda,n) = \frac{n^2-(2\lambda+1)n+\lambda^2}
    {\lambda^2}.
\end{align}

An alternative expression for $C_k(\lambda,n)$ is given by
\[
    \frac{\lambda^n}{n!}C_k(\lambda,n)
    =e^\lambda\frac{d^k}{d\lambda^k}\,
    e^{-\lambda}\frac{\lambda^n}{n!},
\]
which follows from substituting the relation $(z-1)^ke^{\lambda z}
=e^\lambda (d^k/d\lambda^k)\,e^{\lambda (z-1)}$ into
(\ref{def-char}).

Since by (\ref{def-char})
\begin{align} \label{Cklm}
    C_k(\lambda,n) \frac{\lambda^n}{n!}
    =[z^n](z-1)^ke^{\lambda z},
\end{align}
where $[z^n]\phi(z)$ denotes the coefficient of $z^n$ in the Taylor
expansion of $\phi(z)$, we have, for each fixed $n$,
\[
\begin{split}
    \frac{\lambda^n}{n!}\sum_{k\ge0}
    \frac{\lambda^k}{k!}C_k(\lambda,n)w^k
    &=[z^n] \sum_{k\ge0}
    \frac{\lambda^k}{k!}w^k(z-1)^ke^{\lambda z}\\
    &=[z^n]e^{-\lambda w+z\lambda(w+1)}\\
    &=\frac{\lambda^n}{n!}(1+w)^me^{-\lambda w}.
\end{split}
\]
It follows that
\[
    \sum_{n\ge0}C_n(\lambda,k) \frac{\lambda^n}{n!}\,w^n
    = (1+w)^ke^{-\lambda w}.
\]
Comparing this relation with (\ref{def-char}), we obtain the
property $C_k(\lambda,n)=(-1)^{n+k}C_n(\lambda,k)$, for all $k,n\ge
0$.

Another important property we will need is the following
orthogonality relation (see \cite[p.\ 35]{Szego39}).
\begin{lmm} The Charlier polynomials are orthogonal with respect to
the Poisson measure $e^{-\lambda}{\lambda^n}/{n!}$, namely,
\begin{equation}\label{ortho}
    \sum_{n\ge0}C_k(\lambda,n)C_\ell(\lambda,n)
    e^{-\lambda}\frac{\lambda^n}{n!}
    =\delta_{k,\ell}\frac{k!}{\lambda^k},
\end{equation}
where $\delta_{a,b}$ denotes the Kronecker symbol.
\end{lmm}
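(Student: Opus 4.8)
The plan is to compute the generating function of the inner products directly from the defining relation \eqref{def-char}, exploiting the fact that $e^{-\lambda}\lambda^n/n!$ is exactly the weight that appears there. First I would write the left-hand side of \eqref{ortho} using \eqref{Cklm}: since $C_k(\lambda,n)\lambda^n/n! = [z^n](z-1)^k e^{\lambda z}$ and likewise $C_\ell(\lambda,n)\lambda^n/n! = [w^n](w-1)^\ell e^{\lambda w}$, the term $C_k(\lambda,n)C_\ell(\lambda,n)e^{-\lambda}\lambda^n/n!$ equals $e^{-\lambda}\frac{n!}{\lambda^n}[z^n](z-1)^k e^{\lambda z}\cdot[w^n](w-1)^\ell e^{\lambda w}$. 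Summing over $n\ge0$ then calls for a device that turns $\sum_n \frac{n!}{\lambda^n}[z^n]f(z)[w^n]g(z)$ into a single contour integral or a closed form.

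The cleanest such device is the integral $\frac{n!}{\lambda^n} = \int_0^\infty e^{-\lambda t}(\lambda t)^n\,\lambda\,dt / \lambda^n\cdot(\text{const})$ — more precisely $\int_0^\infty e^{-u}u^n\,du = n!$, so $\frac{n!}{\lambda^n} = \lambda^{-n}\int_0^\infty e^{-u}u^n\,du$. Substituting and interchanging sum and integral, $\sum_{n\ge0}C_k(\lambda,n)C_\ell(\lambda,n)e^{-\lambda}\lambda^n/n!$ becomes $e^{-\lambda}\int_0^\infty e^{-u}\Bigl(\sum_{n\ge0}[z^n](z-1)^ke^{\lambda z}\,[w^n](w-1)^\ell e^{\lambda w}\,(u/\lambda)^n\Bigr)du$. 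The inner sum is a Hadamard-type product which, for formal power series $f(z)=\sum a_nz^n$ and $g(w)=\sum b_nw^n$, can be recovered as $\sum_n a_nb_ns^n = \frac{1}{2\pi i}\oint f(z)g(s/z)\frac{dz}{z}$. Taking $s=u/\lambda$, $f(z)=(z-1)^ke^{\lambda z}$, $g(w)=(w-1)^\ell e^{\lambda w}$, the inner sum equals $\frac{1}{2\pi i}\oint (z-1)^k e^{\lambda z}\bigl(u/(\lambda z)-1\bigr)^\ell e^{u/z}\frac{dz}{z}$, and one is left with a two-fold integral to evaluate.

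Alternatively — and this is the route I would actually follow, since it avoids the double integral — I would form the \emph{bivariate} generating function $\sum_{k,\ell\ge0}\frac{\lambda^k}{k!}\frac{\lambda^\ell}{\ell!}x^ky^\ell\sum_{n\ge0}C_k(\lambda,n)C_\ell(\lambda,n)e^{-\lambda}\lambda^n/n!$ and show it equals $\sum_{k\ge0}\frac{\lambda^k}{k!}(xy)^k = e^{\lambda xy}$, which by comparison of coefficients is exactly \eqref{ortho}. To do this, interchange the order of summation and use the computation already carried out in the excerpt: $\frac{\lambda^n}{n!}\sum_{k\ge0}\frac{\lambda^k}{k!}C_k(\lambda,n)x^k = \frac{\lambda^n}{n!}(1+x)^n e^{-\lambda x}$, i.e.\ $\sum_{k\ge0}\frac{\lambda^k}{k!}C_k(\lambda,n)x^k = (1+x)^n e^{-\lambda x}$, and likewise for $y$. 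Hence the triple sum collapses to $e^{-\lambda}e^{-\lambda x}e^{-\lambda y}\sum_{n\ge0}\frac{\bigl(\lambda(1+x)(1+y)\bigr)^n}{n!} = e^{-\lambda}e^{-\lambda x}e^{-\lambda y}e^{\lambda(1+x)(1+y)} = e^{\lambda xy}$, as claimed.

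The main obstacle is not the algebra — the generating-function manipulation above is short — but the \emph{justification of the interchanges of summation}. One must check that the double (indeed triple) series converges absolutely for $x,y$ in a neighbourhood of $0$, which amounts to a crude bound on $|C_k(\lambda,n)|$, or equivalently on $[z^n](z-1)^ke^{\lambda z}$, summable against $\lambda^n x^k/(n!\,k!)$ times the Poisson weights; a Cauchy-estimate $|C_k(\lambda,n)|\le k!\,r^{-k}e^{\lambda(r+1)}\lambda^{-n}n!\,\rho^{-n}$-type bound (optimised in $r,\rho$) suffices to make Fubini applicable for $|x|,|y|$ small, after which the identity of the two analytic functions $e^{\lambda xy}$ and the generating function extends to all $x,y$ by analytic continuation and the coefficient comparison is legitimate. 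Everything else is the routine bookkeeping of power-series coefficients.
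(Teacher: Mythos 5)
Your bivariate generating-function argument is correct, and it is a genuinely different route from the paper's proof. The paper works with the finite expansion $C_k(\lambda,n)=\sum_{0\le j\le k}\binom{k}{j}(-1)^{k-j}n(n-1)\cdots(n-j+1)/\lambda^j$: differentiating the defining relation (\ref{def-char}) $j$ times at $z=1$ shows that $C_k$ is orthogonal (in the Poisson measure) to every falling factorial of degree $j<k$ and pairs to $k!$ with the one of degree $k$; expanding $C_\ell$ (with $\ell\le k$) into falling factorials then yields (\ref{ortho}) using only finite sums, so essentially no convergence issues arise. You instead exploit the dual expansion $\sum_{k\ge0}\tfrac{\lambda^k}{k!}C_k(\lambda,n)x^k=(1+x)^ne^{-\lambda x}$ (which the text above the lemma already establishes) to collapse the triple sum to $e^{\lambda xy}$ and read off (\ref{ortho}) by comparing coefficients; this buys a one-line computation at the price of a Fubini justification, which you rightly flag as the real work. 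One correction there: the Cauchy-type bound you quote, carrying a factor $n!\,\lambda^{-n}\rho^{-n}$, does not close the argument, because the summand contains \emph{two} Charlier factors and $\bigl(n!/\lambda^n\bigr)^2\rho^{-2n}\cdot\lambda^n/n!=(n!/\lambda^n)\rho^{-2n}$ is not summable in $n$. Use instead the elementary bound $|C_k(\lambda,n)|\le(1+n/\lambda)^k$ coming from the falling-factorial expansion; the triple sum is then dominated by $e^{-\lambda}e^{\lambda(|x|+|y|)}\exp\bigl(\lambda e^{|x|+|y|}\bigr)<\infty$ for \emph{all} $x,y$, so Fubini applies everywhere and the analytic-continuation step becomes unnecessary.
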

For self-containedness and in view of the importance of this
orthogonality relation to our analysis below, we give here a proof
similar to the original one by Jordan \cite{Jordan26}.
\begin{proof}
We start from the expansion
\begin{equation}\label{Exp-Ch-poly}
    C_k(\lambda,n)
    =\sum_{0\le j\le k}\binom{k}{j}(-1)^{k-j}
    \frac{n(n-1)\cdots(n-j+1)}{\lambda^j},
\end{equation}
which follows directly from (\ref{Cklm}). Differentiating both sides
of (\ref{def-char}) $j$ times with respect to $z$ and substituting
$z=1$, we get
\[
    \sum_{n\ge0}e^{-\lambda}\frac{\lambda^n}{n!}
    C_k(\lambda,n)n(n-1)\cdots(n-j+1)
    =\begin{cases}
        j!&\text{if $j=k$};\\
        0&\text{if $j< k$},
    \end{cases}
\]
which means that the Charlier polynomials $C_k(\lambda,x)$ are
orthogonal  to any falling factorials of the form
$x(x-1)\cdots(x-j+1)$ with $j<k$ with respect to the Poisson
measure. Now without loss of generality, we may assume that $\ell\le
k$. Then applying (\ref{Exp-Ch-poly}), we get
\begin{equation*}
\begin{split}
    \sum_{n\ge0} e^{-\lambda}\frac{\lambda^n}{n!}
    C_k(\lambda,n)C_\ell(\lambda,n)
    &=\sum_{0\le j\le \ell}\binom{\ell}{j} (-1)^{\ell-j}
    {\lambda^{-j}}\sum_{n\ge0} e^{-\lambda}\frac{\lambda^n}{n!}
    C_k(\lambda,n){n(n-1)\cdots(n-j+1)}\\
    &=\sum_{0\le j\le \ell}\binom{\ell}{j} (-1)^{\ell-j}
    {\lambda^{-j}}\delta_{k,j}k! \\
    &=\delta_{k,\ell}\frac{k!}{\lambda^k}.
\end{split}
\end{equation*}
This completes the proof.
\end{proof}

\subsection{The Charlier-Parseval identity}

Assume that we have a generating function
\[
    F(z)=\sum_{n\ge0} A_n z^n,
\]
which can be written in the form
\begin{align}\label{Fzfz}
    F(z)=e^{\lambda(z-1)}f(z).
\end{align}
Let
\[
    f(z)=\sum_{j\ge0} a_j(z-1)^j.
\]
Then, by (\ref{Cklm}), we have \emph{formally} the Charlier-Jordan
expansion
\begin{align} \label{An-Ch-exp}
    A_n = e^{-\lambda}\frac{\lambda^n}{n!}
    \sum_{j\ge0} a_j C_j (\lambda,n),
\end{align}
and we expect that $A_n$ will be close to $e^{-\lambda}\lambda^n/n!$
if $f(z)$ is close to $1$, or, alternatively, if $a_0$ is close to
$1$ and all other $a_j$'s are close to $0$. The following identity
provides our first step in quantifying such a heuristic.

\begin{prop}[Charlier-Parseval identity] \label{parseval}
Assume that $f(z)$ is analytic in the whole complex plane and
satisfies
\begin{align}\label{fz-est1}
    |f(z)|=O\left( e^{H|z-1|^2}\right),
\end{align}
as $|z|\to \infty$. Then for any $\lambda>2H$
\begin{align} \label{Ch-Par}
\begin{split}
    \sum_{n\ge0}\left|\frac{A_n}{e^{-\lambda}
    \frac{\lambda^n}{n!}}\right|^2e^{-\lambda}
    \frac{\lambda^n}{n!} =\int_0^\infty
    I(\sqrt{r/\lambda})e^{-r}\,dr,
\end{split}
\end{align}
where
\begin{align}\label{Ir}
    I(r):=\frac{1}{2\pi}\int_{-\pi}^\pi|f(1+re^{it})|^2\,dt.
\end{align}
\end{prop}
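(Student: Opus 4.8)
The plan is to establish \eqref{Ch-Par} by expanding the left-hand side in the orthonormal basis of Charlier polynomials and then recognizing the resulting series as a term-by-term integration of the Fourier coefficients $|a_j|^2$ appearing in $f$. First I would note that, by \eqref{An-Ch-exp} (the Charlier--Jordan expansion) together with the orthogonality relation \eqref{ortho}, the left-hand side of \eqref{Ch-Par} equals, at least formally,
\[
    \sum_{n\ge0}\left|\sum_{j\ge0} a_j C_j(\lambda,n)\right|^2
    e^{-\lambda}\frac{\lambda^n}{n!}
    =\sum_{j\ge0} |a_j|^2 \frac{j!}{\lambda^j}.
\]
On the other side, since $f(1+re^{it})=\sum_{j\ge0} a_j r^j e^{ijt}$, Parseval's identity on the circle gives $I(r)=\sum_{j\ge0}|a_j|^2 r^{2j}$, and hence, using $\int_0^\infty r^{2j}e^{-r}\,dr$ after the substitution $r\mapsto\sqrt{r/\lambda}$ — more precisely $\int_0^\infty (r/\lambda)^{j}e^{-r}\,dr = j!/\lambda^j$ — one obtains term by term
\[
    \int_0^\infty I(\sqrt{r/\lambda})e^{-r}\,dr
    =\sum_{j\ge0}|a_j|^2\frac{j!}{\lambda^j},
\]
which matches the left-hand side. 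So the identity is, in essence, two different evaluations of the same sum $\sum_j |a_j|^2 j!/\lambda^j$.

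The real work is justifying the formal manipulations, and this is where hypotheses \eqref{fz-est1} and $\lambda>2H$ enter. I would proceed as follows. The growth bound $|f(z)|=O(e^{H|z-1|^2})$ gives, via Cauchy's estimates on circles $|z-1|=r$, the coefficient bound $|a_j| \le C\, r^{-j} e^{Hr^2}$ for all $r>0$; optimizing over $r$ (taking $r^2 = j/(2H)$) yields $|a_j| = O\big((2eH/j)^{j/2}\big)$, so that $|a_j|^2 j!/\lambda^j$ decays like $(2H/\lambda)^j$ up to subexponential factors, hence the series $\sum_j |a_j|^2 j!/\lambda^j$ converges precisely when $\lambda>2H$. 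This same estimate shows: (i) $I(r)=\sum_j|a_j|^2 r^{2j}$ converges for every $r\ge0$ and $I(\sqrt{r/\lambda})e^{-r}$ is integrable on $(0,\infty)$, so the right-hand integral is finite and Tonelli/Fubini licenses swapping $\int_0^\infty$ with $\sum_j$; and (ii) the Charlier--Jordan series $\sum_j a_j C_j(\lambda,n)$ converges absolutely for each $n$ (using the crude bound $|C_j(\lambda,n)|\le j!\,\lambda^{-j}(1+n)^j/$ something from \eqref{Exp-Ch-poly}, or simply \eqref{Cklm}) and, more importantly, that interchanging $\sum_n$ with the double sum $\sum_{j,k} a_j\overline{a_k} C_j C_k$ is valid, so that orthogonality can be applied termwise. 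The cleanest way to package (ii) is to observe that $\{(\lambda^j/j!)^{1/2} C_j(\lambda,\cdot)\}_j$ is an orthonormal system in $\ell^2$ of the Poisson weight, that the partial sums of \eqref{An-Ch-exp} converge to $A_n/(e^{-\lambda}\lambda^n/n!)$ in that $\ell^2$ space (which follows once we know the coefficient sequence $(a_j (j!/\lambda^j)^{1/2})_j$ is square-summable, i.e. exactly $\lambda>2H$), and then Parseval in this Hilbert space gives the left-hand side of \eqref{Ch-Par} as $\sum_j |a_j|^2 j!/\lambda^j$ directly.

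The main obstacle is therefore not any single computation but the careful bookkeeping of convergence: one must verify that the formal expansion \eqref{An-Ch-exp} is genuinely an $\ell^2$-identity (not merely a formal power-series manipulation), which is exactly what the condition $\lambda>2H$ buys, and that the Fourier expansion of $f(1+re^{it})$ on the circle is uniformly controlled enough in $r$ to integrate term by term against $e^{-r}\,dr$. Both reduce to the single Cauchy-type coefficient bound $|a_j|^2 j!/\lambda^j = O\big((2H/\lambda)^j j^{O(1)}\big)$ derived above; once that is in hand, Fubini (for the integral side) and Hilbert-space Parseval (for the sum side) finish the proof, and the two answers agree because both equal $\sum_{j\ge0}|a_j|^2 j!/\lambda^j$. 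I would close by remarking that the representation $I(r)=\sum_{j\ge0}|a_j|^2 r^{2j}$ is worth recording separately, as it is the form used in the applications.
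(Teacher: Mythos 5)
Your proposal is correct and follows essentially the same route as the paper: both sides are identified with $\sum_{j\ge0}|a_j|^2 j!/\lambda^j$, the integral side via $I(r)=\sum_j|a_j|^2r^{2j}$ and termwise integration, the sum side via $L^2$-convergence of the Charlier--Jordan expansion and orthogonality. The only cosmetic difference is that you justify convergence through an explicit Cauchy coefficient bound, whereas the paper deduces it directly from $I(r)=O(e^{2Hr^2})$; both are fine.
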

\begin{proof}
Since by definition $I(r)=\sum_{j\ge0} |a_j|^2 r^{2j}$ and the
condition (\ref{fz-est1}) implies the convergence of the series
$\sum_{j\ge0} |a_j|^2 j!/\lambda^j$, it follows that
\begin{equation}\label{Parseval}
    \int_0^\infty I\left(\sqrt{r/\lambda} \right)e^{-r}\,dr
    =\sum_{j\ge0}|a_j|^2 \frac{j!}{\lambda^j}.
\end{equation}
Both the series and the integral are convergent because, by
(\ref{fz-est1}), $I(r) = O(e^{2Hr^2})$.

Again by definition
\[
    \sum_{n\ge0} A_n z^n
    =e^{\lambda(z-1)}\sum_{j\ge0} a_j(z-1)^j.
\]
Taking coefficient of $z^n$ on both sides, we obtain
(\ref{An-Ch-exp}), which can be written as
\[
    \frac{A_n}{e^{-\lambda}\frac{\lambda^n}{n!}}
    =\sum_{j\ge0} a_j C_j(\lambda,n),
\]
where the convergence of the above series is pointwise. But the
convergence of the series in (\ref{Parseval}) implies that the
series on the right side also converges in $L_2$-norm with respect
to the Poisson measure $e^{-\lambda} \lambda^n/n!$. Thus the
Proposition follows from (\ref{ortho}).
\end{proof}

In the special cases when $F(z) = (z-1)^ke^{\lambda(z-1)}$, or $A_n=
C_k(\lambda,n) e^{-\lambda}\lambda^n/n!$, we have the identity
\begin{align*}
    \sum_{n\ge0}e^{-\lambda}\frac{\lambda^n}{n!}
    \,|C_k(\lambda,n)|^2 = k!\lambda^{-k}\qquad(k=0,1,\dots),
\end{align*}
which is nothing but (\ref{ortho}) with $k=\ell$. This implies that
\begin{align}\label{id1-CPI}
    \sum_{n\ge0}e^{-\lambda}\frac{\lambda^n}{n!}
    \,|C_k(\lambda,n)| \le \sqrt{k!}\lambda^{-k/2}
    \qquad(k=0,1,\dots).
\end{align}

\subsection{A probabilistic interpretation of the Charlier-Parseval
identity}

Assume that $F(z)$ is a probability generating function of some
non-negative integer valued random variable $X$ having the form
\[
    F(z):=\sum_{m\ge0} \mathbb{P}(X=m)z^m
    = e^{\lambda(z-1)}\sum_{j\ge0} a_j(z-1)^j.
\]
Applying the Charlier-Parseval identity (\ref{Ch-Par}) and
(\ref{Parseval}) to $F$ gives
\[
    \sum_{m\ge0}\left|\frac{\mathbb{P}(X=m)}
    {e^{-\lambda}\frac{\lambda^m}{m!}}-1\right|^2
    e^{-\lambda}\frac{\lambda^m}{m!}
    =\sum_{j\ge1}\frac{j!}{\lambda^j}|a_j|^2,
\]
provided that both series converge. In view of the orthogonality
relations (\ref{ortho}), the coefficients $a_j$ can be expressed as
\[
    a_j=\frac{\lambda^j}{j!}\sum_{m\ge0}
    \mathbb{P}(X=m)C_j(\lambda,m)
    =\frac{\lambda^j}{j!}\mathbb{E}C_j(\lambda,X).
\]
Thus
\[
    \sum_{m\ge0}\left|\frac{\mathbb{P}(X=m)}
    {e^{-\lambda}\frac{\lambda^m}{m!}}-1\right|^2
    e^{-\lambda}\frac{\lambda^m}{m!}
    =\sum_{j\ge1}\frac{\lambda^j}{j!}
    \bigl|\mathbb{E}C_j(\lambda,X)\bigr|^2.
\]
This identity relates the closeness of $X$ to Poisson measure by
means of the moments of $X$ since the quantity
$\mathbb{E}C_j(\lambda,X)$ is a linear combination of the moments of
$X$.

On the other hand, it is also clear, by Cauchy-Schwarz inequality,
that the series on the right-hand side satisfies
\[
    \sum_{j\ge1}\frac{\lambda^j}{j!}
    \bigl|\mathbb{E}C_j(\lambda,X)\bigr|^2
    =\sup \frac{\left(\mathbb{E} \sum_{j\ge1} a_jC_j(\lambda,X)
    \right)^2}{\sum_{j\ge1} a_j^2j!/\lambda^j},
\]
where the supremum is taken over all real sequences
$\{a_j\}_{j\ge1}$ such that $\sum_{j\ge1} a_j^2j!/\lambda^j<\infty$.
Let
\[
    g(x):=\sum_{j\ge1} a_j C_j(\lambda,x).
\]
Then
\[
    \sup \frac{\left(\mathbb{E}
    \sum_{j\ge1} a_jC_j(\lambda,X)
    \right)^2}{\sum_{j\ge1}a_j^2j!/\lambda^j}
    =\sup_{\mathbb{E}g(\zeta)=0}
    \frac{\bigl(\mathbb{E}g(X)\bigr)^2}{\mathbb{E}g(\zeta)^2},
\]
where $\zeta$ is a Poisson random variable with mean $\lambda$.

Applying the difference equation (\ref{diff-eq}) for Charlier
polynomials and taking into account that $a_0=\mathbb{E}g(X)=0$. we
then have
\[
    g(X)=\frac{1}{\lambda}\sum_{j\ge1}
    a_j\mathbb{E}\bigl(XC_{j-1}
    (\lambda,X-1)-\lambda C_{j-1}(\lambda,X)\bigr)
    =\frac{1}{\lambda}\bigl(Xh(X-1)-\lambda h(X)\bigr),
\]
where $h(x)=\sum_{j\ge1}a_kC_{j-1}(\lambda,x)$. Thus we can write
\[
    \Biggl(\sum_{m\ge0}\left|\frac{\mathbb{P}(X=m)}
    {e^{-\lambda}\frac{\lambda^m}{m!}}-1\right|^2
    e^{-\lambda}\frac{\lambda^m}{m!}\Biggr)^{1/2}
    =\sup\mathbb{E}\bigl(Xh(X-1)-\lambda h(X)\bigr),
\]
the supremum being taken over all functions $h$ such that
\(\mathbb{E}\bigl(\zeta h(\zeta-1)-\lambda h(\zeta)\bigr)^2=1\). The
right-hand side of the last expression is reminiscent of the
Chen-Stein equation; see the book \cite{BHJ92}; see also Goldstein
and Reinert \cite{GR05} and the references therein for the
connection between orthogonal polynomials and Stein's method.

\subsection{Asymptotic forms of the Charlier-Parseval
identity}

The identity (\ref{Ch-Par}) can be readily extended to the following
effective (or asymptotic) versions for large $\lambda$.

\begin{prop}[Asymptotic forms of the Charlier-Parseval identity]
Let $F(z)$ and $f(z)$ be defined as above. Assume that $f$ is an
entire function and satisfies the condition
\begin{equation}\label{fz-est2}
    |f(z)|\le K e^{H|z-1|^2},
\end{equation}
for all $z\in \mathbb{C}$, with some positive constants $K$ and $H$.
Then uniformly for all $N\ge 0$ and $\lambda\ge(2+\ve)H$ with
$\ve>0$
\begin{align} \label{Ch-Par1}
    \sum_{n\ge0}\left|\frac{A_n}{e^{-\lambda}\frac{\lambda^n}{n!}}
    -\sum_{0\le j\le N} a_jC_j(\lambda,n)\right|^2 e^{-\lambda}
    \frac{\lambda^n}{n!}
    &\le K^2\frac{2+\ve}{\ve}\left(
    \frac{(2+\ve)H}{\lambda}\right)^{N+1},\\
    \sum_{n\ge0}\left|{A_n}-{e^{-\lambda}
    \frac{\lambda^n}{n!}}\sum_{0\le j\le N}
    a_jC_j(\lambda,n)\right|
    &\le K\sqrt{\frac{2+\ve}{\ve}}\left(
    \frac{(2+\ve)H}{\lambda}\right)^{(N+1)/2},\label{Ch-Par2}
\end{align}
and uniformly for all $n\ge 0$
\begin{align} \label{Ch-Par3}
    \left|A_n-e^{-\lambda}\frac{\lambda^n}{n!}\left(
    \sum_{0\le j\le N} a_jC_{j}(\lambda,n)\right)\right|
    \le K\frac{2+\ve}{\ve}
    \cdot\frac1{\sqrt{\lambda}}\left(
    \frac{(2+\ve)H}{\lambda}\right)^{(N+1)/2}.
\end{align}
\end{prop}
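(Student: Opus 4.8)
The plan is to apply the Charlier--Parseval apparatus not to $f$ itself but to the ``tail'' of its Charlier--Jordan expansion. Put $g(z):=f(z)-\sum_{0\le j\le N}a_j(z-1)^j=\sum_{j>N}a_j(z-1)^j$ and $\widetilde A_n:=A_n-e^{-\lambda}\frac{\lambda^n}{n!}\sum_{0\le j\le N}a_jC_j(\lambda,n)$, so that $\widetilde A_n$ is the coefficient of $z^n$ in $e^{\lambda(z-1)}g(z)$ and, by (\ref{An-Ch-exp}), $\widetilde A_n/(e^{-\lambda}\lambda^n/n!)=\sum_{j>N}a_jC_j(\lambda,n)$. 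Since $\sum_{j\ge0}|a_j|^2j!/\lambda^j<\infty$ (this is (\ref{Parseval}), which converges because $\lambda>2H$), its tail $\sum_{j>N}|a_j|^2j!/\lambda^j$ is finite, so $\sum_{j>N}a_jC_j(\lambda,\cdot)$ converges in $L_2$ of the Poisson measure and the orthogonality relation (\ref{ortho}) gives the exact identity
\[
    \sum_{n\ge0}\left|\frac{\widetilde A_n}{e^{-\lambda}\frac{\lambda^n}{n!}}\right|^2
    e^{-\lambda}\frac{\lambda^n}{n!}=\sum_{j>N}|a_j|^2\frac{j!}{\lambda^j}.
\]
So everything reduces to estimating this Charlier--Jordan tail, and the three displayed inequalities will fall out by Cauchy--Schwarz and elementary Gamma-integral bookkeeping.

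The crux is the estimate: for every $r>0$,
\[
    \sum_{j>N}|a_j|^2\Bigl(\frac r\lambda\Bigr)^{\!j}
    \le K^2\Bigl(\frac{(2+\ve)H}{\lambda}\Bigr)^{\!N+1}e^{2r/(2+\ve)}.
\]
To prove it I would set $s^2:=r/\lambda$ and compare $s^{2j}$ with a larger radius: with $\rho^2:=ts^2$ and $t:=\lambda/((2+\ve)H)\ge1$ one has $s^{2j}=\rho^{2j}t^{-j}\le\rho^{2j}t^{-(N+1)}$ for $j>N$, whence $\sum_{j>N}|a_j|^2s^{2j}\le t^{-(N+1)}\sum_{j\ge0}|a_j|^2\rho^{2j}=t^{-(N+1)}I(\rho)$; finally $I(\rho)\le K^2e^{2H\rho^2}$ by (\ref{fz-est2}) and (\ref{Ir}), and $\rho^2=r/((2+\ve)H)$ gives $2H\rho^2=2r/(2+\ve)$ and $t^{-(N+1)}=((2+\ve)H/\lambda)^{N+1}$. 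I expect this to be the only genuine obstacle: here the uniform entire growth bound on $f$ gets converted into geometric decay of the Charlier coefficients, and here the hypothesis $\lambda\ge(2+\ve)H$ is used (so that $t\ge1$ and the later integrals converge).

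Granting the crux, (\ref{Ch-Par1}) follows by writing $j!/\lambda^j=\int_0^\infty(r/\lambda)^je^{-r}\,dr$ and integrating against $e^{-r}\,dr$: since $2r/(2+\ve)-r=-r\ve/(2+\ve)$,
\[
    \sum_{j>N}|a_j|^2\frac{j!}{\lambda^j}\le K^2\Bigl(\frac{(2+\ve)H}{\lambda}\Bigr)^{\!N+1}
    \int_0^\infty e^{-r\ve/(2+\ve)}\,dr
    =K^2\,\frac{2+\ve}{\ve}\Bigl(\frac{(2+\ve)H}{\lambda}\Bigr)^{\!N+1},
\]
which with the identity above is precisely (\ref{Ch-Par1}). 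Inequality (\ref{Ch-Par2}) is then immediate from the Cauchy--Schwarz inequality against the probability weights $e^{-\lambda}\lambda^n/n!$ (which sum to $1$): $\sum_n|\widetilde A_n|\le\bigl(\sum_n|\widetilde A_n/(e^{-\lambda}\lambda^n/n!)|^2e^{-\lambda}\lambda^n/n!\bigr)^{1/2}$, into which one substitutes (\ref{Ch-Par1}).

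For the pointwise bound (\ref{Ch-Par3}), note first that (\ref{Ch-Par2}) forces $\widetilde A_n\to0$, so $\widetilde A_n=\sum_{m>n}(\widetilde A_{m-1}-\widetilde A_m)$ and hence $|\widetilde A_n|\le\sum_{m\ge1}|\widetilde A_{m-1}-\widetilde A_m|$ uniformly in $n$. The point is that $\widetilde A_{m-1}-\widetilde A_m$ is the coefficient of $z^m$ in $(z-1)e^{\lambda(z-1)}g(z)=e^{\lambda(z-1)}\sum_{k>N+1}a_{k-1}(z-1)^k$, i.e.\ multiplication by $z-1$ shifts the Charlier index up by one. Running the same Cauchy--Schwarz-plus-orthogonality argument as above (now summing over $m$ first) gives $\sum_{m\ge1}|\widetilde A_{m-1}-\widetilde A_m|\le\bigl(\sum_{j>N}|a_j|^2(j+1)!/\lambda^{j+1}\bigr)^{1/2}$, and with $(j+1)!/\lambda^{j+1}=\int_0^\infty(r/\lambda)^{j+1}e^{-r}\,dr$ one integrates $(r/\lambda)$ times the crux estimate to obtain $\sum_{j>N}|a_j|^2(j+1)!/\lambda^{j+1}\le\frac1\lambda K^2\bigl(\frac{2+\ve}{\ve}\bigr)^2((2+\ve)H/\lambda)^{N+1}$, using $\int_0^\infty re^{-r\ve/(2+\ve)}\,dr=((2+\ve)/\ve)^2$. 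Taking square roots yields (\ref{Ch-Par3}), the gained factor $\lambda^{-1/2}$ being exactly the effect of the index shift.
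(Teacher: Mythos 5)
Your proposal is correct and follows essentially the same route as the paper: apply the Charlier--Parseval identity to the tail $f(z)-\sum_{0\le j\le N}a_j(z-1)^j$, extract the factor $\bigl((2+\ve)H/\lambda\bigr)^{N+1}$ from the tail sum using $\lambda\ge(2+\ve)H$ together with $I(\rho)\le K^2e^{2H\rho^2}$, then use Cauchy--Schwarz for (\ref{Ch-Par2}) and multiplication by $1-z$ with telescoping for (\ref{Ch-Par3}). Your only deviations are cosmetic --- you extract the geometric factor pointwise in $r$ before integrating rather than after, and you telescope from above rather than below --- and your write-up of (\ref{Ch-Par3}) in fact tracks the factor $\bigl((2+\ve)H/\lambda\bigr)^{(N+1)/2}$ more explicitly than the paper's final display does.
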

\begin{proof}
Applying (\ref{Ch-Par}) with $\lambda=(2+\ve)H$ and using the upper
bound $I(r)\le K^2e^{2Hr^2}$ (by (\ref{fz-est2})), we get
\begin{align*}
    \sum_{j\ge0} \frac{|a_j|^2j!}{\bigl((2+\ve)H\bigr)^j}
    &=\int_0^\infty I\left(\sqrt{\frac{r}{(2+\ve)H}}
    \right)e^{-r}\,dr \\
    &\le K^2 \int_0^\infty
    e^{-r(1-2/(2+\ve))}\,dr\\
    &= K^2\frac{2+\ve}{\ve}.
\end{align*}
Applying again Proposition~\ref{parseval} but to the function
$f(z)=g(z)-\sum_{0\le j\le N} a_j(z-1)^j$ and using the above
estimate for $\lambda\ge(2+\ve)H$, we get
\[
\begin{split}
    \sum_{n\ge0}\left|\frac{A_n}{e^{-\lambda}
    \frac{\lambda^n}{n!}}-\sum_{0\le j\le N}
    a_jC_j(\lambda,n)\right|^2
    e^{-\lambda}\frac{\lambda^n}{n!}
    &=\sum_{j>N}|a_j|^2\frac{j!}{\lambda^j}\\
    &\le \frac{1}{\lambda^{N+1}} \sum_{j>N}
    \frac{|a_j|^2j!}{\bigl((2+\ve)H\bigr)^{j-(N+1)}}\\
    &=\frac{\bigl((2+\ve)H\bigr)^{N+1}}{\lambda^{N+1}}
    \sum_{j>N} \frac{|a_j|^2j!}{\bigl((2+\ve)H\bigr)^{j}}\\
    &\le K^2\frac{2+\ve}{\ve}
    \left(\frac{(2+\ve)H}{\lambda}\right)^{N+1}.
\end{split}
\]
Thus (\ref{Ch-Par1}) follows and the estimate (\ref{Ch-Par2}) is an
immediate consequence of Cauchy-Schwarz inequality.

For (\ref{Ch-Par3}), we apply Proposition~\ref{parseval} to the
function
\[
    (1-z)\left(f(z)-\sum_{0\le j\le N} a_j(z-1)^j\right),
\]
and obtain
\[
    \sum_{n\ge0}\left|\frac{A_n-A_{n-1}}
    {e^{-\lambda}\frac{\lambda^n}{n!}}-\sum_{0\le j\le N}
    a_jC_{j+1}(\lambda,n)\right|^2
    e^{-\lambda}\frac{\lambda^n}{n!}
    =\sum_{j>N}\frac{|a_{j}|^2(j+1)!}{\lambda^{j+1}}.
\]
By partial summation, (\ref{diff-eq}) and Cauchy-Schwarz inequality
\begin{align}
    \left|A_n-e^{-\lambda}\frac{\lambda^n}{n!}
    \left(\sum_{0\le j\le N}a_jC_{j}
    (\lambda,n)\right)\right|
    &\le\sum_{0\le m\le n}
    \left|{A_m-A_{m-1}}-e^{-\lambda}\frac{\lambda^m}{m!}
    \left(\sum_{0\le j\le N}
    a_jC_{j+1}(\lambda,m)\right)\right|\nonumber \\
    & \le\left(\sum_{m\ge0}
    \left|\frac{A_m-A_{m-1}}{e^{-\lambda}
    \frac{\lambda^m}{m!}}-\sum_{0\le j\le N}
    a_jC_{j+1}(\lambda,n)\right|^2
    e^{-\lambda}\frac{\lambda^m}{m!}\right)^{1/2}\nonumber\\
    & =\left(\sum_{j>N}\frac{|a_{j}|^2(j+1)!}
    {\lambda^{j+1}}\right)^{1/2}. \label{An-bd}
\end{align}
Now for $\lambda\ge(2+\ve)H$
\begin{align*}
    \sum_{n\ge0} \frac{|a_n|^2(n+1)!}{\lambda^{N+1}}
    &=\frac{1}{\lambda}\int_0^\infty
    I\left(\sqrt{r/\lambda} \right)re^{-r}\,dr\\
    &\le\frac{K^2}{\lambda} \int_0^\infty
    e^{-r(1-2H/\lambda)}r\,dr\\
    &=\frac{K^2}{\lambda(1-2H/\lambda)^2}.
\end{align*}
Thus (\ref{Ch-Par3}) follows from substituting this bound into
(\ref{An-bd}).
\end{proof}

\subsection{Some useful estimates of Tauberian type}

We now derive a few other effective bounds for certain partial sums
or series by applying the Charlier-Parseval bounds we derived above;
these bounds are more suitable for use for the diverse Poisson
approximation distances we will consider. They are the types of
results that have more or less the flavor of typical Tauberian
theorems.

Assume that $\zeta_\lambda$ is a Poisson$(\lambda)$ distribution.
Denote by
\[
    Z(n)=\min \left\{\mathbb{P}(\zeta_\lambda\le n),
    \mathbb{P}(\zeta_\lambda>n)\right\}.
\]
It is clear that $Z(n)\le 1/2$.

\begin{prop}\label{int-ineq} Let $F, f, A_n, a_n$ and $I$ be
defined as in (\ref{Fzfz}) and (\ref{Ir}). Assume that $f(z)$ is an
entire function and satisfies the condition (\ref{fz-est1}). Then
for $\lambda>2H$ the following inequalities hold. For $n\ge 0$,
\begin{align}\label{int-ineq-1}
    \sum_{n\ge0}|A_n|
    &\le \left(\int_0^\infty
    I(\sqrt{r/\lambda})e^{-r}\,dr\right)^{1/2}, \\
    \label{int-ineq-2} |A_n|
    &\le \frac{1}{\sqrt{\lambda}}
    \left(\int_0^\infty I(\sqrt{r/\lambda})re^{-r}
    \,dr\right)^{1/2}\sqrt{Z(n)}.
\end{align}

If we additionally assume that $F(1)=0$, then for $n\ge0$,
\begin{align}\label{int-ineq-3}
    \sum_{n\ge0}|A_0+A_1+\cdots+A_n|
    &\le\sqrt{\lambda}\left(\int_0^\infty
    I(\sqrt{r/\lambda} )r^{-1}e^{-r}\,dr\right)^{1/2},\\
    \label{int-ineq-4} |A_0+A_1+\cdots+A_n|
    &\le\left(\int_0^\infty I(\sqrt{r/\lambda})
    e^{-r}\,dr\right)^{1/2}\sqrt{Z(n)}.
\end{align}
\end{prop}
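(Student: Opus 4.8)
The plan is to derive all four inequalities from the Charlier--Parseval identity (Proposition~\ref{parseval}), applied either to $f$ itself or to the modified functions $(1-z)f(z)$ and $f(z)/(1-z)$, followed each time by the Cauchy--Schwarz inequality against the Poisson weights $e^{-\lambda}\lambda^n/n!$. The mechanism is simple: the $I$-function (in the sense of~(\ref{Ir})) attached to $(1-z)f(z)$ equals $r^2I(r)$ and the one attached to $f(z)/(1-z)$ equals $I(r)/r^2$, so applying~(\ref{Ch-Par}) to these three functions produces, respectively, $\int_0^\infty I(\sqrt{r/\lambda})e^{-r}\,dr$, $\lambda^{-1}\int_0^\infty I(\sqrt{r/\lambda})\,re^{-r}\,dr$, and $\lambda\int_0^\infty I(\sqrt{r/\lambda})\,r^{-1}e^{-r}\,dr$ for the corresponding $L^2$ norms with respect to the Poisson measure; these are exactly the quantities on the right-hand sides.

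First I would dispatch the two full-sum bounds. For~(\ref{int-ineq-1}) I would write $|A_n|=\bigl(|A_n|\big/\sqrt{e^{-\lambda}\lambda^n/n!}\,\bigr)\sqrt{e^{-\lambda}\lambda^n/n!}$, apply Cauchy--Schwarz, estimate the first factor by $\bigl(\int_0^\infty I(\sqrt{r/\lambda})e^{-r}\,dr\bigr)^{1/2}$ via~(\ref{Ch-Par}), and use $\sum_n e^{-\lambda}\lambda^n/n!=1$ for the second. For~(\ref{int-ineq-3}) I would pass to the partial sums $B_n:=A_0+\cdots+A_n$, whose generating function is $F(z)/(1-z)=e^{\lambda(z-1)}g(z)$ with $g(z):=f(z)/(1-z)$; since $a_0=f(1)=F(1)=0$ the function $g$ is entire, $g(z)=-\sum_{j\ge0}a_{j+1}(z-1)^j$, and its $I$-function is $\sum_{j\ge0}|a_{j+1}|^2r^{2j}=I(r)/r^2$. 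Applying~(\ref{Ch-Par}) to $\{B_n\}$ gives $\sum_n|B_n|^2\big/(e^{-\lambda}\lambda^n/n!)=\lambda\int_0^\infty I(\sqrt{r/\lambda})\,r^{-1}e^{-r}\,dr$, and~(\ref{int-ineq-3}) follows by the same Cauchy--Schwarz step.

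For the two pointwise bounds the one genuine idea is a telescoping device: if a sequence $\{C_n\}$ satisfies $C_n\to0$ then $C_n=\sum_{0\le m\le n}(C_m-C_{m-1})$ and also $C_n=-\sum_{m>n}(C_m-C_{m-1})$, where $C_{-1}:=0$. Cauchy--Schwarz applied to either representation, with the Poisson weights summed only over $\{0,\dots,n\}$ or over $\{n+1,n+2,\dots\}$, contributes a factor $\sqrt{\mathbb{P}(\zeta_\lambda\le n)}$ or $\sqrt{\mathbb{P}(\zeta_\lambda>n)}$; keeping the better of the two produces $\sqrt{Z(n)}$. For~(\ref{int-ineq-2}) I would take $C_n=A_n$ (which tends to $0$ since $\sum_n|A_n|<\infty$ by~(\ref{int-ineq-1})), noting that the increments $A_m-A_{m-1}$ are the coefficients of $(1-z)F(z)=e^{\lambda(z-1)}(1-z)f(z)$, so~(\ref{Ch-Par}) gives $\sum_m|A_m-A_{m-1}|^2\big/(e^{-\lambda}\lambda^m/m!)=\lambda^{-1}\int_0^\infty I(\sqrt{r/\lambda})\,re^{-r}\,dr$; combining with the telescoping bound yields~(\ref{int-ineq-2}). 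For~(\ref{int-ineq-4}) I would take $C_n=B_n$, whose increments $B_m-B_{m-1}=A_m$ are governed directly by~(\ref{Ch-Par}) for $f$; here the extra hypothesis $F(1)=0$ is exactly what makes $\sum_m A_m=0$, hence $B_n\to0$ and the tail representation $B_n=-\sum_{m>n}A_m$ legitimate, and the argument gives~(\ref{int-ineq-4}).

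The main obstacle, though a minor one, is to check that Proposition~\ref{parseval} really applies to the auxiliary functions $(1-z)f(z)$ and $f(z)/(1-z)$: both must be entire and satisfy a bound of the shape~(\ref{fz-est1}). Division by $z-1$ is harmless, since $a_0=0$ makes the quotient entire and leaves the exponent $H$ unchanged. Multiplication by $z-1$ enlarges the exponent only by an arbitrarily small $\ve>0$, which is absorbed because the hypothesis $\lambda>2H$ is strict: one would apply Proposition~\ref{parseval} with $H+\ve$ in place of $H$, choosing $\ve$ so that $\lambda>2(H+\ve)$. The convergence of the resulting Parseval series $\sum_j|a_j|^2(j+1)!/\lambda^{j+1}$ and $\sum_j|a_j|^2(j-1)!/\lambda^{j-1}$, as well as the limits $A_n\to0$ and $B_n\to0$ used for the telescoping step, all follow from this same strict inequality together with~(\ref{int-ineq-1}) and $F(1)=0$. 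Once these routine verifications are made, the four estimates are immediate.
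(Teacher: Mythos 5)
Your proposal is correct and follows essentially the same route as the paper: Cauchy--Schwarz against the Poisson weights combined with the Charlier--Parseval identity applied to $F$, $F/(1-z)$, and $(1-z)F$, with the $\sqrt{Z(n)}$ factor obtained by taking the minimum of the head- and tail-segment Cauchy--Schwarz bounds. Your explicit check that the auxiliary functions still satisfy the growth hypothesis (absorbing the factor $|z-1|$ into $e^{\ve|z-1|^2}$ using the strictness of $\lambda>2H$) is a detail the paper leaves implicit, but the argument is otherwise identical.
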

\begin{proof} By Cauchy-Schwarz inequality
\[
    \sum_{n\ge0} |A_n|=\sum_{n\ge0}
    \frac{|A_n|}{e^{-\lambda}\frac{\lambda^n}{n!}}
    \left(e^{-\lambda}\frac{\lambda^n}{n!}\right)^{1/2}
    \left(e^{-\lambda}\frac{\lambda^n}{n!}\right)^{1/2}
    \le \left(\sum_{n\ge0}\left|
    \frac{A_n}{e^{-\lambda}\frac{\lambda^n}{n!}}\right|^2
    e^{-\lambda}\frac{\lambda^n}{n!} \right)^{1/2}.
\]
The upper bound (\ref{int-ineq-1}) then follows from (\ref{Ch-Par}).

The third inequality (\ref{int-ineq-3}) is proved by applying
(\ref{int-ineq-1}) to the function $F_1(z) := F(z)/(1-z)$. Note that
the condition $F(1)=0$ implies that $F_1(z)$ is regular at $z=1$.
With this $F_1$, (\ref{int-ineq-1}) now has the form
\[
    \sum_{n\ge0} |A_0+A_1+\cdots+A_n|
    \le\left(\int_0^\infty I_1(\sqrt{r/\lambda} )
    e^{-r}\,dr\right)^{1/2},
\]
where
\[
    I_1(r)=\frac{1}{2\pi r^2}\int_{-\pi}^\pi
    |f(1+re^{it})|^2\,dt = I(r)/r^2,
\]
and (\ref{int-ineq-3}) follows.

For the fourth inequality (\ref{int-ineq-4}), we start from applying
the Cauchy-Schwarz inequality, giving
\begin{equation}\label{first-bd}
    |A_0+A_1+\cdots+A_n|
    \le \left(\sum_{j\ge0}
    \left|\frac{A_j}{e^{-\lambda}\frac{\lambda^j}{j!}}
    \right|^2e^{-\lambda}\frac{\lambda^j}{j!}
    \right)^{1/2}\left(\sum_{0\le j\le n}
    e^{-\lambda} \frac{\lambda^j}{j!}\right)^{1/2}.
\end{equation}
On the other hand, the condition $F(1)=0$ implies that $\sum_{j\ge0}
A_j=0$. Consequently,
\begin{align}\label{second-bd}
    |A_0+A_1+\cdots+A_n|&=|A_{n+1}+A_{n+2}+\cdots|\notag \\
    &\le \left(\sum_{j\ge0}
    \left|\frac{A_j}{e^{-\lambda}
    \frac{\lambda^j}{j!}}\right|^2
    e^{-\lambda}\frac{\lambda^j}{j!}\right)^{1/2}
    \left(\sum_{j>n}e^{-\lambda}\frac{\lambda^j}{j!}
    \right)^{1/2}.
\end{align}
Taking the minimum of the two upper bounds (\ref{first-bd}) and
(\ref{second-bd}), we obtain (\ref{int-ineq-4}).

Finally, the second inequality (\ref{int-ineq-2}) follows from
(\ref{int-ineq-4}) by applying it to the generating function
$(1-z)F(z)$ instead of $F(z)$.
\end{proof}

\section{Applications. I. Distances for Poisson approximation}
\label{sec:App1}

We apply in this section the diverse tools based on the
Charlier-Parseval identity and derive bounds for the closeness
between the Poisson-binomial distribution and a Poisson distribution
with the same mean. We need a few simple inequalities.

\subsection{Lemmas}
\begin{lmm}\label{1zez} The inequalities
\begin{align}
    |(1+z)e^{-z}| &\le e^{|z|^2/2} \label{ez-ineq}\\
    \left|(1+z)e^{-z} + \sum_{0\le j\le m}
    \frac{j-1}{j!}\,(-z)^j \right|
    &\le c_m |z|^{m+1} e^{|z|^2/2},\label{ez-ineq2}
\end{align}
hold for all $z\in \mathbb{C}$, where $m\ge1$ and
\begin{align} \label{cm}
    c_m :=\frac1{m!}\int_0^1 e^{t^2/2} (1-t)^{m-1}
    (m-1+t) dt.
\end{align}
\end{lmm}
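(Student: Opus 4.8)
The plan is to prove both inequalities from a single integral representation of the function $g(z) := (1+z)e^{-z}$. First I would establish the identity $g(z) = 1 + \int_0^1 (-z)^2 e^{-tz}(1-t)\,\dd t \cdot(\text{suitable normalization})$; more precisely, repeated integration by parts (or Taylor's theorem with integral remainder applied to $g$) gives, for each $m\ge 1$,
\[
    (1+z)e^{-z} = \sum_{0\le j\le m}\frac{j-1}{j!}(-z)^j
    + \frac{(-z)^{m+1}}{m!}\int_0^1 e^{-tz}(1-t)^{m-1}(m-1+t)\,\dd t,
\]
the point being that the Taylor coefficients of $g$ are exactly $(j-1)/j!$ (check: $g(z)=e^{-z}+ze^{-z}=\sum_j(-1)^j z^j/j!+\sum_j(-1)^{j-1}z^j/(j-1)!$, whose $z^j$ coefficient is $(-1)^j(1/j!-1/(j-1)!)=(-1)^j(1-j)/j!=(j-1)(-1)^j/j!$, matching $(j-1)(-z)^j$-ish up to the sign bookkeeping I would verify carefully). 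The kernel $(1-t)^{m-1}(m-1+t)/m!$ is what produces the constant $c_m$ in \eqref{cm}.

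Granting that representation, \eqref{ez-ineq2} is immediate: take absolute values inside the integral, bound $|e^{-tz}|=e^{-t\,\mathrm{Re}\,z}\le e^{t|z|}\le e^{t^2/2+|z|^2/2}$ (using $t|z|\le (t^2+|z|^2)/2$), pull out $e^{|z|^2/2}$, and what remains is exactly $|z|^{m+1}c_m$ with $c_m=\frac1{m!}\int_0^1 e^{t^2/2}(1-t)^{m-1}(m-1+t)\,\dd t$. For \eqref{ez-ineq}, I would not use the $m$-term expansion but rather a direct argument: write $(1+z)e^{-z}=1+\int_0^1(-z)^2 e^{-tz}(1-t)\,\dd t\cdot$? — cleaner is to note $(1+z)e^{-z}=\sum_{j\ge 0}\frac{j-1}{j!}(-z)^j$ and compare termwise with $e^{|z|^2/2}=\sum_{k\ge0}|z|^{2k}/(2^k k!)$; since $(1+z)e^{-z}=1-\frac{z^2}{2}+\frac{z^3}{3}-\cdots$ has the property that grouping consecutive pairs of terms... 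Actually the slickest route: $\frac{d}{dz}\big[(1+z)e^{-z}\big]=-ze^{-z}$, so $(1+z)e^{-z}=1-\int_0^z te^{-t}\,\dd t$ along a ray, and one shows $|1-\int_0^z te^{-t}\dd t|\le e^{|z|^2/2}$. The most robust version is still the integral-remainder identity at general $m$: letting $m\to\infty$ (the remainder $\to 0$ since $c_m|z|^{m+1}e^{|z|^2/2}\to 0$ for fixed $z$, as $c_m=O(1/m!)$ roughly) recovers the full series, and a crude bound $|(1+z)e^{-z}|\le 1+\sum_{j\ge 2}\frac{|j-1|}{j!}|z|^j$ can be compared against $e^{|z|^2/2}$ directly, or one simply uses $m=1$: $(1+z)e^{-z}=1+(-z)^2\int_0^1 e^{-tz}(1-t)\cdot\frac{t}{1}\dd t$? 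I would settle the exact $m=1$ kernel by direct computation and then bound as above, getting $|(1+z)e^{-z}-1|\le |z|^2 e^{|z|^2/2}\int_0^1 t(1-t)\dd t\cdot(\ldots)$; combined with $1\le e^{|z|^2/2}$ this is not quite tight enough, so \eqref{ez-ineq} likely needs the sharper observation that the alternating-type series for $(1+z)e^{-z}$ has coefficients dominated in absolute value by those of $e^{|z|^2/2}$ after regrouping, i.e. $\big|\frac{j-1}{j!}\big|\le$ [coefficient of $z^j$ in $e^{z^2/2}$ bound] — I would verify $\sum_j\frac{|j-1|}{j!}x^j\le e^{x^2/2}$ for all real $x\ge0$ by a short monotonicity/calculus argument, which then gives \eqref{ez-ineq} on all of $\mathbb{C}$ by the triangle inequality on power series.

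The main obstacle I anticipate is getting \eqref{ez-ineq} with the clean constant $1$ in the exponent (no extra multiplicative constant), since a naive triangle-inequality bound on $\sum_j\frac{|j-1|}{j!}|z|^j$ overshoots. The fix is to handle the $j=0,1$ terms (which contribute $1+0$, not an absolute-value blowup) together with the tail, and to use the exact integral representation rather than termwise bounds near $z=0$; I expect the inequality $\int_0^1 t e^{-tx}\,\dd t \le$ (something $\le e^{x^2/2}/x^2$-ish) to be the delicate elementary estimate, verifiable by comparing derivatives. Once \eqref{ez-ineq} is pinned down, \eqref{ez-ineq2} and the explicit form of $c_m$ follow routinely from the Taylor-with-integral-remainder identity by the $|e^{-tz}|\le e^{t^2/2+|z|^2/2}$ bound described above.
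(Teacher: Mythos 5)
Your treatment of (\ref{ez-ineq2}) is essentially the paper's: Taylor's formula with integral remainder yields the kernel $(1-t)^{m-1}(m-1+t)/m!$, and the bound $|e^{-tz}|\le e^{t|z|}\le e^{t^2/2+|z|^2/2}$ finishes the estimate. (Your sign on the remainder is off --- the correct identity is $(1+z)e^{-z}+\sum_{0\le j\le m}\frac{j-1}{j!}(-z)^j=-\frac{(-z)^{m+1}}{m!}\int_0^1 e^{-tz}(1-t)^{m-1}(m-1+t)\,dt$, since the Taylor coefficient of $(1+z)e^{-z}$ is $-\,(j-1)(-1)^j/j!$ --- but you flagged this, and it is harmless once absolute values are taken.)

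The genuine gap is in (\ref{ez-ineq}). The route you ultimately settle on --- verifying $\sum_{j\ge0}\frac{|j-1|}{j!}x^j\le e^{x^2/2}$ for $x\ge0$ and concluding over $\mathbb{C}$ by the triangle inequality on the power series --- cannot work, because that inequality is false: the left-hand side equals $2+(x-1)e^x$, which at $x=1$ gives $2>e^{1/2}\approx1.649$. More structurally, (\ref{ez-ineq}) holds with \emph{equality} at $z=-2$ (both sides equal $e^2$), so no argument that replaces the coefficients of $(1+z)e^{-z}$ by their absolute values can possibly succeed; the cancellation between $|1+z|$ and $|e^{-z}|$ is essential. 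The paper's proof exploits exactly this in two lines: writing $z=re^{it}$, one has $|1+z|^2=1+2r\cos t+r^2\le e^{2r\cos t+r^2}$ by $1+u\le e^u$, hence $|(1+z)e^{-z}|\le e^{r\cos t+r^2/2}\cdot e^{-r\cos t}=e^{r^2/2}$. Your alternative sketches (integrating $-te^{-t}$ along a ray; the $m=1$ remainder combined with $1\le e^{|z|^2/2}$) are either left incomplete or, as you note yourself, not tight enough, so none of them closes this gap.
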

\begin{proof}
Write $z=re^{it}$, where $r>0$ and $t\in\mathbb{R}$. Then, by
$1+x\le e^x$ for $x\in \mathbb{R}$,
\[
\begin{split}
    |(1+z)e^{-z}|&=\sqrt{1+2r\cos t+r^2}\,e^{-r\cos t}\\
    &\le e^{r\cos t +r^2/2-r\cos t} \\
    &=e^{r^2/2}.
\end{split}
\]
For (\ref{ez-ineq2}), we start with the relation
\[
    e^z - \sum_{j<m} \frac{z^j}{j!}
    = \frac{z^m}{(m-1)!}\int_0^1 e^{tz} (1-t)^{m-1}dt,
\]
and deduce that
\[
    (1-z)e^z + \sum_{0\le j\le m} \frac{j-1}{j!}\,z^j
    = -\frac{z^{m+1}}{m!}\int_0^1 e^{tz} (1-t)^{m-1}
    (m-1+t) dt,
\]
for $m\ge1$. Thus (\ref{ez-ineq2}) follows from the inequality
$|tz|\le |z|^2/2+t^2/2$.
\end{proof}

\begin{rem} \label{rem0}
Note that in the proof of (\ref{ez-ineq2}), we have the inequality
\[
    \frac{1+(x-1)e^x}{x^2e^{x^2/2}}\le c_1 = \sqrt{e}-1
    = 0.64872\dots\qquad(x\in\mathbb{R}),
\]
which can easily be sharpened, by elementary calculus, to
\[
    \frac{1+(x-1)e^x}{x^2e^{x^2/2}}\le 0.63236\dots.
\]
But this improvement over $c_1$ is marginal, so we retain the
simpler upper bound $c_1$ in the following use.
\end{rem}

The next lemma is crucial in applying our Charlier-Parseval bounds
derived above.
\begin{lmm}
\label{P0-ineq} The inequality
\begin{equation}\label{Ch-ineq1}
    \left| \prod_{1\le k\le n}(1+v_k)e^{-v_k} -1\right|
    \le c_1 V_2 e^{V_2/2},
\end{equation}
holds for any complex numbers $\{v_k\}$, where
\begin{align}\label{Vm}
    V_m := \sum_{1\le k\le n} |v_k|^m.
\end{align}
\end{lmm}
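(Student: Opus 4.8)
The plan is to reduce the product over $k$ to a telescoping/inductive bound, using the single-factor inequality (\ref{ez-ineq}) together with its refinement (\ref{ez-ineq2}) for $m=1$, which is exactly the content of Remark~\ref{rem0}. Write $P_n := \prod_{1\le k\le n}(1+v_k)e^{-v_k}$ and $P_0 := 1$. The identity I would exploit is the telescoping decomposition
\[
    P_n - 1 = \sum_{1\le k\le n} P_{k-1}\bigl((1+v_k)e^{-v_k}-1\bigr),
\]
which follows since $P_k - P_{k-1} = P_{k-1}\bigl((1+v_k)e^{-v_k}-1\bigr)$. Taking absolute values and bounding each partial product $|P_{k-1}| \le \prod_{1\le j\le k-1} e^{|v_j|^2/2} \le e^{V_2/2}$ by (\ref{ez-ineq}), I get
\[
    |P_n - 1| \le e^{V_2/2} \sum_{1\le k\le n} \bigl|(1+v_k)e^{-v_k}-1\bigr|.
\]

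The next step is to bound the single-factor deviation $|(1+v_k)e^{-v_k}-1|$. Applying (\ref{ez-ineq2}) with $m=1$ gives $|(1+z)e^{-z}-1| \le c_1 |z|^2 e^{|z|^2/2}$ for all complex $z$ (the $m=1$ sum $\sum_{0\le j\le 1}\frac{j-1}{j!}(-z)^j$ equals $-1$, recovering exactly this form). Hence
\[
    \sum_{1\le k\le n}\bigl|(1+v_k)e^{-v_k}-1\bigr|
    \le c_1 \sum_{1\le k\le n} |v_k|^2 e^{|v_k|^2/2}.
\]
Here there is a small obstacle: the factor $e^{|v_k|^2/2}$ attached to each term of the sum is not immediately $\le e^{V_2/2}$ unless $|v_k|^2 \le V_2$, which is true since $V_2 = \sum_j |v_j|^2 \ge |v_k|^2$. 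So each $e^{|v_k|^2/2} \le e^{V_2/2}$, and combining everything,
\[
    |P_n - 1| \le e^{V_2/2}\cdot c_1 \cdot e^{V_2/2}\sum_{1\le k\le n}|v_k|^2
    = c_1 V_2 e^{V_2}.
\]

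This gives $c_1 V_2 e^{V_2}$ rather than the claimed $c_1 V_2 e^{V_2/2}$, so the main work is sharpening the argument to save a factor $e^{V_2/2}$. The fix is to not bound $|P_{k-1}|$ and $e^{|v_k|^2/2}$ separately but to keep them together: from the telescoping sum,
\[
    |P_n-1| \le c_1 \sum_{1\le k\le n} |P_{k-1}|\,|v_k|^2 e^{|v_k|^2/2}
    \le c_1 \sum_{1\le k\le n} |v_k|^2 \prod_{1\le j\le k} e^{|v_j|^2/2},
\]
using $|P_{k-1}| e^{|v_k|^2/2} = \prod_{1\le j\le k} e^{|v_j|^2/2}$ via (\ref{ez-ineq}) applied to $j=1,\dots,k-1$. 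Writing $W_k := \sum_{1\le j\le k}|v_j|^2$ (so $W_n = V_2$ and $|v_k|^2 = W_k - W_{k-1}$), this is $c_1\sum_{1\le k\le n}(W_k-W_{k-1})e^{W_k/2}$, and since $e^{W_k/2} \le e^{W_k/2}$ is increasing one compares the sum with the integral $\int_0^{V_2} e^{w/2}\,dw = 2(e^{V_2/2}-1) \le 2 e^{V_2/2}$; more carefully, $(W_k-W_{k-1})e^{W_k/2} \le \int_{W_{k-1}}^{W_k} e^{w/2}\,dw \cdot e^{(W_k-W_{k-1})/2}$ is not quite clean, so the cleanest route is the elementary bound $(e^{a}-1) \le a e^{a}$ reorganized as: $\sum_k (W_k - W_{k-1}) e^{W_k/2} \le 2\sum_k (e^{W_k/2}-e^{W_{k-1}/2})e^{(W_k-W_{k-1})/2}$ — if this still loses the constant, the genuinely correct and simplest finish is to prove the claim by induction on $n$, where the inductive step requires only the scalar inequality $|P_{n-1}|\cdot|(1+v_n)e^{-v_n}-1| + |P_{n-1}-1|\cdot\!$-type estimate combined with $c_1 W_{n-1}e^{W_{n-1}/2} + c_1|v_n|^2 e^{W_n/2} \le c_1 W_n e^{W_n/2}$, which holds because $e^{W_{n-1}/2}\le e^{W_n/2}$. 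That induction is the step I expect to be the real (though routine) obstacle, and it is the approach I would ultimately carry out.
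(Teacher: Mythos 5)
Your proof is correct and is essentially the paper's own argument: the paper telescopes the product by partial summation, bounds each partial product via (\ref{ez-ineq}) and each factor $(1+v_k)e^{-v_k}-1$ via (\ref{ez-ineq2}) with $m=1$, arriving at exactly your intermediate bound $c_1\sum_{1\le k\le n}|v_k|^2\prod_{1\le j\le k}e^{|v_j|^2/2}=c_1\sum_k|v_k|^2e^{W_k/2}$. The integral-comparison detour and the closing induction are unnecessary (though the induction you sketch is valid): since $W_k\le V_2$ for every $k$, each $e^{W_k/2}\le e^{V_2/2}$ and the bound $c_1V_2e^{V_2/2}$ follows at once — the extra factor $e^{V_2/2}$ you feared losing only arises if one bounds the partial product $|P_{k-1}|$ by $e^{V_2/2}$ instead of by $e^{W_{k-1}/2}$.
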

\begin{proof}
By partial summation
\begin{align} \label{partial-summ}
    \prod_{1\le k\le n} \xi_k - \prod_{1\le k\le n}
    \eta_k = \sum_{1\le k\le n} (\xi_k - \eta_k)
    \left(\prod_{1\le j<k} \xi_j\right)
    \left(\prod_{k<j\le n} \eta_j \right),
\end{align}
for nonzero $\{\xi_k\}$ and $\{\eta_k\}$. Applying this formula, we
get
\[
    \prod_{1\le k\le n}(1+v_k)e^{-v_k} -1
    = \sum_{1\le k\le n}\bigl((1+v_k)e^{-v_k}-1\bigr)
    \prod_{1\le j<k}(1+v_j)e^{-v_j}.
\]
By the two inequalities (\ref{ez-ineq}) and (\ref{ez-ineq2}) with
$m=1$, we then obtain
\[
    \left|\prod_{1\le k\le n}(1+v_k)e^{-v_k} -1\right|
    \le c_1\sum_{1\le k\le n} |v_k|^2
    \prod_{1\le j<k}e^{|v_j|^2/2},
\]
and (\ref{Ch-ineq1}) follows.
\end{proof}

\subsection{New results}

We are ready to apply in this section the tools we developed above
to derive bounds for several Poisson approximation distances.

Let
\[
    S_n:=X_1+X_2+\cdots+X_n,
\]
where the $X_j$'s are independent Bernoulli random variables with
\[
    \mathbb{P}(X_j=1)=1-\mathbb{P}(X_j=0)=p_j
    \qquad(1\le j\le n).
\]
Then, \emph{here and throughout this section},
\begin{align}\label{PB-Fz}
    F(z) := \sum_{0\le m\le n}\mathbb{P}(S_n=m)z^m
    = \prod_{1\le j\le n}(q_j+p_jz),
\end{align}
where $q_j := 1-p_j$. Define $\lambda_m := \sum_{1\le j\le n}
p_j^m$, $\lambda = \lambda_1$ and $\theta := \lambda_2/\lambda_1$.

Let $\Po(\lambda)$ denote a Poisson distribution with mean
$\lambda$.
\begin{thm}\label{thm-P0}
We have the following estimates: $(i)$ for the $\chi^2$-distance
\[
    d_{\chi^2}(\mathscr{L}(S_n),\Po(\lambda))
    :=\sum_{m\ge0}\left|\frac{\mathbb{P}(S_n=m)}
    {e^{-\lambda}\frac{\lambda^m}{m!}}-1\right|^2
    e^{-\lambda}\frac{\lambda^m}{m!}
    \le \frac{2c_1^2\theta^2}{(1-\theta)^{3}};
\]
$($ii$)$ for the total variation distance
\[
    \dtv(\mathscr{L}(S_n),\Po(\lambda))
    :=\frac{1}{2}\sum_{m\ge0}\left|
    \mathbb{P}(S_n=m)-e^{-\lambda}\frac{\lambda^m}{m!}\right|
    \le\frac{c_1\theta}{\sqrt{2}(1-\theta)^{3/2}};
\]
and $($iii$)$ for the Wasserstein (or Fortet-Mourier) distance
\[
    d_W(\mathscr{L}(S_n),\Po(\lambda))
    := \sum_{m\ge0}\Biggl|\mathbb{P}(S_n\le m)
    -\sum_{j\le m}e^{-\lambda}\frac{\lambda^j}{j!}\Biggr|
    \le\frac{c_1\lambda_2}{\sqrt{\lambda}(1-\theta)}.
\]
We also have the following non-uniform bounds for $m\ge0$: $($iv$)$
for the Kolmogorov distance
\[
    \Biggl|\mathbb{P}(S_n\le m)-\sum_{j\le m}
    e^{-\lambda}\frac{\lambda^j}{j!}\Biggr|
    \le \frac{\sqrt{2}c_1\theta}{(1-\theta)^{3/2}}
    \,\sqrt{Z(m)};
\]
and $($v$)$ for the point metric
\[
    \left|\mathbb{P}(S_n=m)
    -e^{-\lambda}\frac{\lambda^m}{m!}\right|
    \le\frac{\sqrt{6}c_1\theta}{(1-\theta)^2\sqrt{\lambda}}
    \,\sqrt{Z(m)}.
\]
\end{thm}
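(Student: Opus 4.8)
The plan is to apply the entire Charlier--Parseval machinery of Section~2 to the single generating function
\[
    \widetilde F(z):=F(z)-e^{\lambda(z-1)}=\sum_{m\ge0}\widetilde A_m z^m,
    \qquad
    \widetilde A_m:=\mathbb{P}(S_n=m)-e^{-\lambda}\frac{\lambda^m}{m!},
\]
which by \eqref{PB-Fz} has the shape \eqref{Fzfz} with
\[
    \widetilde f(z)=f(z)-1,\qquad
    f(z)=\prod_{1\le k\le n}\bigl(1+p_k(z-1)\bigr)e^{-p_k(z-1)}.
\]
Each of the five quantities in the theorem is, up to the harmless $\tfrac12$ in $\dtv$, exactly one of the left-hand sides occurring in Propositions~\ref{parseval} and~\ref{int-ineq} applied to this $\widetilde F$: part~(i) is the Charlier--Parseval identity \eqref{Ch-Par} itself, (ii) is \eqref{int-ineq-1}, (iii) is \eqref{int-ineq-3}, (iv) is \eqref{int-ineq-4}, and (v) is \eqref{int-ineq-2}. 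Note $\widetilde F(1)=F(1)-1=0$, so the extra hypothesis $F(1)=0$ needed for \eqref{int-ineq-3}--\eqref{int-ineq-4} holds automatically.

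The one estimate that feeds all parts is a bound on $\widetilde I(r)=\frac1{2\pi}\int_{-\pi}^\pi|\widetilde f(1+re^{it})|^2\,dt$. Taking $v_k=p_k(z-1)$ in Lemma~\ref{P0-ineq}, so that $V_2=\lambda_2|z-1|^2$, gives at once
\[
    |\widetilde f(z)|=\Bigl|\prod_{1\le k\le n}(1+v_k)e^{-v_k}-1\Bigr|
    \le c_1\lambda_2|z-1|^2 e^{\lambda_2|z-1|^2/2}.
\]
Hence $\widetilde f$ satisfies the growth condition \eqref{fz-est1} with any $H>\lambda_2/2$; since $\theta=\lambda_2/\lambda<1$ (the excluded case $\theta=1$ forces all $p_j\in\{0,1\}$, $S_n$ degenerate, and every displayed bound is $+\infty$, hence trivial), one may pick $H$ with $\lambda_2/2<H<\lambda/2$, so Propositions~\ref{parseval} and~\ref{int-ineq} do apply to $\widetilde F$. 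Since the bound above depends on $z$ only through $|z-1|$, setting $z=1+re^{it}$ yields
\[
    \widetilde I(r)\le c_1^2\lambda_2^2\,r^4 e^{\lambda_2 r^2},
    \qquad\text{so}\qquad
    \widetilde I\!\left(\sqrt{r/\lambda}\right)\le\frac{c_1^2\lambda_2^2}{\lambda^2}\,r^2 e^{\theta r}.
\]

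From here every part collapses to the elementary Gamma integrals $\int_0^\infty r^{k}e^{-(1-\theta)r}\,dr=k!/(1-\theta)^{k+1}$. Concretely: (i) comes from \eqref{Ch-Par} with $k=2$, giving $2c_1^2\theta^2/(1-\theta)^3$; (ii) from $\dtv=\tfrac12\sum_m|\widetilde A_m|\le\tfrac12\bigl(d_{\chi^2}\bigr)^{1/2}$ via \eqref{int-ineq-1}; (iii) from \eqref{int-ineq-3}, whose kernel carries the extra weight $r^{-1}$ ($k=1$) and whose prefactor $\sqrt\lambda$ converts $\lambda_2/\lambda$ into $\lambda_2/\sqrt\lambda$; (iv) from \eqref{int-ineq-4}, the same integral as in (i), multiplied by $\sqrt{Z(m)}$; and (v) from \eqref{int-ineq-2}, whose kernel carries the weight $r$ ($k=3$, producing $(1-\theta)^{-2}$) and whose prefactor $1/\sqrt\lambda$ combines with $\lambda_2/\lambda$ via $\lambda_2/\lambda^{3/2}=\theta/\sqrt\lambda$. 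Collecting constants reproduces the five bounds verbatim. I do not anticipate a real obstacle: the work is entirely in the already-established propositions. The only points requiring care are the reduction to $\widetilde F$ together with the verification that its entire factor $\widetilde f$ meets \eqref{fz-est1} with an admissible $H<\lambda/2$ exactly when $\theta<1$, and then the bookkeeping of the powers of $r$ and of the external $\sqrt\lambda$ / $1/\sqrt\lambda$ factors in \eqref{int-ineq-1}--\eqref{int-ineq-4} so that the exponents of $(1-\theta)$ and the numerical constants emerge as stated.
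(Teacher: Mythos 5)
Your proposal is correct and follows essentially the same route as the paper: apply Proposition~\ref{parseval} and the four inequalities of Proposition~\ref{int-ineq} to $F(z)-e^{\lambda(z-1)}$, bound $I(r)$ via Lemma~\ref{P0-ineq} with $v_k=p_kre^{it}$ to get $I(r)\le c_1^2\lambda_2^2r^4e^{\lambda_2r^2}$, and evaluate the resulting Gamma integrals. Your explicit check that $\widetilde f$ satisfies \eqref{fz-est1} with an admissible $H<\lambda/2$ precisely when $\theta<1$ is a small point of care the paper leaves implicit, but the argument is the same.
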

\begin{proof}
For $(i)$, we apply (\ref{Ch-Par}) to the function
$F(z)-e^{\lambda(z-1)}$ and use the inequality (\ref{Ch-ineq1}) with
$v_j=p_jre^{it}$ to estimate the integral $I$. This yields
\begin{align}\label{PB-Ir}
    I(r) &=\frac{1}{2\pi}\int_{-\pi}^\pi
    \left|\prod_{1\le j\le n}(1+p_jre^{it})
    e^{-p_jre^{it}}-1\right|^2\,dt \notag \\
    &\le c_1 ^2\lambda_2^2r^4e^{\lambda_2r^2}
\end{align}
hence
\begin{align*}
    \int_0^\infty I(\sqrt{r/\lambda} )e^{-r}\,dr
    &\le c_1 ^2\theta^2\int_0^\infty
    r^2 e^{-r(1-\theta)}\,dr \\
    &= \frac{2c_1 ^2 \theta^2}{(1-\theta)^{3}},
\end{align*}
and the estimate in $(i)$ for the $\chi^2$-distance follows.

Similarly, the inequalities in $($\emph{ii}$)$ and in
$($\emph{iv}$)$ follow from substituting the estimate (\ref{PB-Ir})
into the two inequalities (\ref{int-ineq-1}) and (\ref{int-ineq-4})
respectively.

As to the non-uniform estimate in $($\emph{v}$)$ for the point
metric, we have, again, by (\ref{PB-Ir}),
\begin{align*}
    \int_0^\infty I(\sqrt{r/\lambda} )re^{-r}\,dr
    &\le c_1 ^2\theta^2\int_0^\infty r^3 e^{-r(1-\theta)}\,dr
    \\ &\le \frac{6c_1 ^2\theta^2}{(1-\theta)^4}.
\end{align*}
Substituting this estimate in (\ref{int-ineq-2}) gives the
inequality in $($\emph{v}$)$.

Finally, the upper bound in $($\emph{iii}$)$ for $d_W$ is derived
similarly by the inequality (\ref{int-ineq-3}) using again
(\ref{PB-Ir})
\[
    \int_0^\infty r^{-1}e^{-r} I(\sqrt{r/\lambda} )\,dr
    \le \frac{c_1 ^2\theta^2}{(1-\theta)^2}.
\]
This completes the proof of the theorem.
\end{proof}


The reason of studying the $\chi^2$-distance (also referred to as
the quadratic divergence) is at least twofold in addition to its
applications in real problems. First, it is structurally simpler
than most other distances because it satisfies the following
identity.
\begin{cor} Let $\{a_j\}$ be given by
\begin{equation}\label{F-e-exp}
    F(z)-e^{\lambda (z-1)}=e^{\lambda(z-1)}
    \sum_{j\ge2} a_j(z-1)^j,
\end{equation}
where $F$ is given in (\ref{PB-Fz}). Then the $\chi^2$-distance
satisfies the identity
\begin{align} \label{id-d-chi2}
    d_{\chi^2}(\mathscr{L}(S_n),\Po(\lambda))
    =\sum_{j\ge2}\frac{j!}{\lambda^j}|a_j|^2.
\end{align}
\end{cor}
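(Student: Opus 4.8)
The plan is to recognize that the asserted identity \eqref{id-d-chi2} is nothing more than the Charlier–Parseval identity \eqref{Ch-Par} applied to the particular generating function $F(z)-e^{\lambda(z-1)}$, once one checks that this function admits the representation \eqref{Fzfz} with an analytic (indeed entire) multiplier $f$ obeying the growth condition \eqref{fz-est1}. Concretely, I would set
\[
    f(z) := \prod_{1\le j\le n}(1+p_j(z-1))e^{-p_j(z-1)} - 1,
\]
so that $F(z)-e^{\lambda(z-1)} = e^{\lambda(z-1)}f(z)$, using $F(z)=\prod_j(q_j+p_jz)=\prod_j(1+p_j(z-1))$ and $\lambda=\sum_j p_j$. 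This $f$ is a polynomial times exponentials, hence entire, and its Taylor expansion about $z=1$ has the form $\sum_{j\ge0}a_j(z-1)^j$ with $a_0=a_1=0$: indeed $f(1)=0$, and differentiating the product at $z=1$ kills the linear term because $\frac{d}{dz}\big[(1+p_j(z-1))e^{-p_j(z-1)}\big]\big|_{z=1}=p_j-p_j=0$ for every $j$. This is exactly why the sum in \eqref{F-e-exp} starts at $j=2$.

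Next I would verify the growth hypothesis. By Lemma~\ref{P0-ineq} applied with $v_k = p_k(z-1)$, we have
\[
    \left|\,\prod_{1\le k\le n}(1+p_k(z-1))e^{-p_k(z-1)}-1\,\right|
    \le c_1 V_2\, e^{V_2/2},
\]
where $V_2 = \sum_k p_k^2|z-1|^2 = \lambda_2 |z-1|^2$. Hence $|f(z)| \le c_1\lambda_2|z-1|^2 e^{\lambda_2|z-1|^2/2} = O\big(e^{H|z-1|^2}\big)$ as $|z|\to\infty$ for any $H>\lambda_2/2$; in particular \eqref{fz-est1} holds with, say, $H=\lambda_2/2+\epsilon$, and the condition $\lambda>2H$ needed in Proposition~\ref{parseval} reads $\lambda > \lambda_2$, i.e. $\theta<1$, which is the standing assumption throughout this section (and is implicit in the finiteness of the bounds in Theorem~\ref{thm-P0}). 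With these verifications in hand, Proposition~\ref{parseval} (more precisely the identity \eqref{Parseval} together with \eqref{Ch-Par}) gives
\[
    \sum_{n\ge0}\left|\frac{A_n}{e^{-\lambda}\frac{\lambda^n}{n!}}\right|^2 e^{-\lambda}\frac{\lambda^n}{n!} = \sum_{j\ge0}|a_j|^2\frac{j!}{\lambda^j},
\]
where here $A_n = \mathbb{P}(S_n=n)\text{-coefficient of the difference}$, i.e. $A_n = [z^n]\big(F(z)-e^{\lambda(z-1)}\big) = \mathbb{P}(S_n=n) - e^{-\lambda}\lambda^n/n!$. Since $a_0=a_1=0$, the right-hand sum starts at $j=2$, and the left-hand side is precisely $d_{\chi^2}(\mathscr{L}(S_n),\Po(\lambda))$ by its definition in Theorem~\ref{thm-P0}$(i)$. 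This yields \eqref{id-d-chi2}.

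The one point requiring a little care — the main (mild) obstacle — is making sure all the convergence bookkeeping in Proposition~\ref{parseval} is legitimately inherited: one must confirm that $\sum_j |a_j|^2 j!/\lambda^j<\infty$ (which follows from \eqref{fz-est1} and $\lambda>2H$, exactly as argued in the proof of that proposition, via $I(r)=O(e^{2Hr^2})$), and that the $L^2(e^{-\lambda}\lambda^n/n!)$ convergence of $\sum_j a_j C_j(\lambda,n)$ to $A_n/(e^{-\lambda}\lambda^n/n!)$ combined with the orthogonality relation \eqref{ortho} gives the Parseval identity termwise. All of this is already packaged in Proposition~\ref{parseval}, so the corollary is genuinely immediate once the representation $F(z)-e^{\lambda(z-1)} = e^{\lambda(z-1)}f(z)$ with the stated properties of $f$ is in place; I would present it as a one-paragraph derivation, citing Lemma~\ref{P0-ineq} for the growth bound and Proposition~\ref{parseval} (in the form \eqref{Parseval}) for the identity, and noting the vanishing of $a_0,a_1$ to justify the summation range $j\ge2$.
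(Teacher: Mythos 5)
Your proposal is correct and follows essentially the same route as the paper: write $F(z)-e^{\lambda(z-1)}=e^{\lambda(z-1)}\sum_{j\ge2}a_j(z-1)^j$, obtain the Charlier expansion of $\mathbb{P}(S_n=m)-e^{-\lambda}\lambda^m/m!$, and invoke the Parseval identity (\ref{Parseval}). The only difference is that you explicitly verify the growth hypothesis (\ref{fz-est1}) via Lemma~\ref{P0-ineq} and note the resulting condition $\theta<1$, details the paper leaves implicit.
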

\begin{proof}
By (\ref{F-e-exp}), we have
\begin{equation}\label{expansion}
    \mathbb{P}(S_n=m)-e^{-\lambda}\frac{\lambda^m}{m!}
    =e^{-\lambda}\frac{\lambda^m}{m!}\sum_{j\ge2}
    a_jC_j(\lambda,m).
\end{equation}
Then (\ref{id-d-chi2}) follows from (\ref{Parseval}).
\end{proof}
Second, the $\chi^2$-distance is often used to provide bounds for
other distances; see \cite{BV08}. An example is as follows.

\begin{cor} The information divergence (or the Kullback-Leibner
divergence) satisfies
\begin{align} \label{PB-dkl}
    \dkl(\mathscr{L}(S_n),\Po(\lambda))
    :=\sum_{m\ge0} \mathbb{P}(S_n=m)
    \log\left(\frac{\mathbb{P}(S_n=m)}
    {e^{-\lambda}\frac{\lambda^m}{m!}}\right)
    \le \frac{2c_1 ^2\theta^2}{(1-\theta)^{3}}.
\end{align}
\end{cor}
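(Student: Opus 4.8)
The plan is to bound the Kullback--Leibler divergence by the $\chi^2$-distance and then invoke part $(i)$ of Theorem~\ref{thm-P0}. The standard inequality here is that for any probability measures $P$ and $Q$ with $P$ absolutely continuous with respect to $Q$, one has $\dkl(P,Q)\le \chi^2(P,Q)$, where in our notation $\chi^2(P,Q)=\sum_m (P(m)/Q(m)-1)^2 Q(m)$. I would take $P=\mathscr{L}(S_n)$ and $Q=\Po(\lambda)$; note that $S_n$ takes only finitely many values, so $P\ll Q$ holds trivially and all sums are finite, sidestepping any convergence issue.

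First I would record the elementary pointwise inequality $\log x\le x-1$ for $x>0$, which rearranges to $-\log x \ge 1-x$. Writing $x_m := \mathbb{P}(S_n=m)\big/\bigl(e^{-\lambda}\lambda^m/m!\bigr)$, we have
\begin{align*}
    \dkl(\mathscr{L}(S_n),\Po(\lambda))
    &= \sum_{m\ge0} \mathbb{P}(S_n=m)\log x_m
    = \sum_{m\ge0} e^{-\lambda}\frac{\lambda^m}{m!}\, x_m \log x_m.
\end{align*}
The key step is the pointwise bound $x\log x \le x^2-x = (x-1)^2 + (x-1)$, valid for $x\ge 0$ (it follows from $\log x\le x-1$ after multiplying by $x\ge0$). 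Summing this against the Poisson weights and using that $\sum_m \mathbb{P}(S_n=m) = \sum_m e^{-\lambda}\lambda^m/m! = 1$, the linear term $\sum_m e^{-\lambda}(\lambda^m/m!)(x_m-1) = \sum_m \mathbb{P}(S_n=m) - \sum_m e^{-\lambda}\lambda^m/m! = 0$ cancels, leaving
\[
    \dkl(\mathscr{L}(S_n),\Po(\lambda))
    \le \sum_{m\ge0} e^{-\lambda}\frac{\lambda^m}{m!}\,(x_m-1)^2
    = d_{\chi^2}(\mathscr{L}(S_n),\Po(\lambda)).
\]
Then part $(i)$ of Theorem~\ref{thm-P0} gives $d_{\chi^2}(\mathscr{L}(S_n),\Po(\lambda))\le 2c_1^2\theta^2/(1-\theta)^3$, which is exactly (\ref{PB-dkl}).

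I do not anticipate a serious obstacle: the only point requiring a little care is justifying the cancellation of the linear term, which needs the normalization $\sum_m\mathbb{P}(S_n=m)=1$ (clear, since $S_n$ is a genuine random variable) together with $\sum_m e^{-\lambda}\lambda^m/m!=1$ (the Poisson normalization), and the finiteness of every sum involved, which holds because $S_n$ is supported on $\{0,1,\dots,n\}$. One could alternatively phrase the whole argument via the inequality $t\log t - t + 1 \le (t-1)^2$ for $t\ge 0$ applied termwise, which makes the nonnegativity of each summand manifest and avoids even mentioning the cancellation; either route is routine. If one wanted the divergence measured in a different base, the bound simply picks up the corresponding $1/\log(\text{base})$ factor, but as stated (natural logarithm) the constant $2c_1^2$ carries over verbatim.
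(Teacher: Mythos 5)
Your proof is correct and follows essentially the same route as the paper: both derive $\dkl\le d_{\chi^2}$ from the elementary inequality $\log x\le x-1$ (the paper applies it to the ratio $y_n/x_n$ and simplifies $\sum_n y_n^2/x_n-1=\sum_n x_n(y_n/x_n-1)^2$, which is exactly your cancellation of the linear term) and then invoke part $(i)$ of Theorem~\ref{thm-P0}. Nothing further is needed.
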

\begin{proof}
Given two sequences of non-negative real numbers $x_j$ and $y_j$
such that
\[
    x_0+x_1+\cdots=1\quad\hbox{and}\quad y_0+y_1+\cdots=1.
\]
By the elementary inequality $\log x\le x-1$, we obtain
\[
    \sum_{n\ge0} y_n\log\frac{y_n}{x_n}
    \le \sum_{n\ge0} y_n
    \left(\frac{y_n}{x_n}-1\right)
    =\sum_{n\ge0}\frac{y_n^2}{x_n}-1
    = \sum_{n\ge0} x_n\left(\frac{y_n}{x_n}-1\right)^2.
\]
Thus $\dkl\le d_{\chi^2}$. Now (\ref{PB-dkl}) follows from applying
this inequality with $x_m=e^{-\lambda}\lambda^m/m!$ and
$y_m=\mathbb{P}(S_n=m)$ and then using the inequality in $(i)$ of
Theorem~\ref{thm-P0}.
\end{proof}

Since $Z(m)\le 1/2$, from the two non-uniform estimates
$($\emph{iv}$)$ and $(v)$ of Theorem~\ref{thm-P0}, we easily obtain
that the Kolmogorov distance satisfies
\[
    d_K(\mathscr{L}(S_n),\Po(\lambda))
    := \sup_m\left|\mathbb{P}(S_n\le m)
    - e^{-\lambda} \sum_{0\le j\le m}
    \frac{\lambda^j}{j!}\right|
    \le \frac{c_1 \theta}{(1-\theta)^{3/2}};
\]
and the point metric is bounded above by
\[
    d_P(\mathscr{L}(S_n),\Po(\lambda))
    := \sup_m\left|\mathbb{P}(S_n= m)
    - e^{-\lambda}\frac{\lambda^m}{m!}\right|
    \le\frac{\sqrt{3} c_1 \theta}{\sqrt{\lambda}\,
    (1-\theta)^{2}}.
\]
Note that the estimate so obtained for the Kolmogorov distance is
worse than that obtained by the simple relation $d_K\le\dtv$ and the
estimate $($\emph{ii}$)$ of Theorem~\ref{thm-P0}.

The quantity $Z(m)$ can be readily bounded above by the following
estimate; see also \cite[p.~259]{BHJ92} or \cite{HZ08}.
\begin{lmm}
\[
    Z(m)\le e^{-(m-\lambda)^2/(2(m+\lambda))}.
\]
\end{lmm}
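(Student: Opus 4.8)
The plan is to bound $Z(m) = \min\{\P(\zeta_\lambda \le m), \P(\zeta_\lambda > m)\}$ by a Chernoff-type exponential estimate using the standard exponential Markov inequality applied to the Poisson distribution. First I would consider the two cases separately. If $m \ge \lambda$, then $Z(m) \le \P(\zeta_\lambda > m) \le \P(\zeta_\lambda \ge m)$, and for any $t > 0$,
\[
    \P(\zeta_\lambda \ge m) \le e^{-tm}\, \mathbb{E} e^{t\zeta_\lambda}
    = e^{-tm} e^{\lambda(e^t-1)}.
\]
Symmetrically, if $m < \lambda$, then $Z(m) \le \P(\zeta_\lambda \le m)$ and for any $t > 0$, $\P(\zeta_\lambda \le m) \le e^{tm}\, \mathbb{E} e^{-t\zeta_\lambda} = e^{tm} e^{\lambda(e^{-t}-1)}$. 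So in either case the exponent to control is of the form $\lambda(e^{\pm t}-1) \mp tm$.

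The next step is to \emph{not} optimise exactly over $t$ (which would give the sharp Cram\'er rate but an unwieldy expression), but instead to choose a convenient suboptimal $t$ that produces the clean quadratic form $(m-\lambda)^2/(2(m+\lambda))$. I expect the right choice is $t = \log(m/\lambda)$ in the case $m \ge \lambda$, which after simplification gives exponent $\lambda - m + m\log(\lambda/m) = -\lambda\,\psi(m/\lambda)$ where $\psi(u) = u\log u - u + 1$; one then needs the elementary inequality $\psi(u) \ge (u-1)^2/(u+1)$ for $u \ge 1$ — equivalently $\lambda\,\psi(m/\lambda) \ge (m-\lambda)^2/(\lambda+m)$. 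The case $m < \lambda$ reduces to the same inequality with $u = m/\lambda \in (0,1)$, using $\psi(u) \ge (u-1)^2/(u+1)$ there as well (or a direct symmetric choice $t = \log(\lambda/m)$). Combining the two cases, and noting the factor of $2$ we still have room for, gives exactly $Z(m) \le e^{-(m-\lambda)^2/(2(m+\lambda))}$.

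The main obstacle is verifying the scalar inequality $\psi(u) := u\log u - u + 1 \ge \frac{(u-1)^2}{2(u+1)}$ for all $u > 0$ (note $Z$ involves a factor $1/2$ in the exponent, which is why the denominator carries the $2$). This is a one-variable calculus fact: set $g(u) = \psi(u) - \frac{(u-1)^2}{2(u+1)}$, observe $g(1) = 0$, and check that $g'(u)$ has the right sign on each side of $u=1$ — a short computation shows $g'(u) = \log u - \frac{(u-1)(u+3)}{2(u+1)^2}$ after simplification, which vanishes at $u=1$ and whose derivative is nonnegative, so $g$ attains its minimum $0$ at $u=1$. Once this inequality is in hand the rest is bookkeeping: plug $u = m/\lambda$ into $\lambda\psi(m/\lambda) \ge (m-\lambda)^2/(2(m+\lambda))$ in the case $m \ge \lambda$, do the mirror-image argument for $m < \lambda$, and take the minimum. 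I would also remark that a cruder bound $Z(m) \le e^{-(m-\lambda)^2/(2m)}$ for $m \ge \lambda$ and $e^{-(m-\lambda)^2/(2\lambda)}$ for $m < \lambda$ is even easier (using $\psi(u) \ge (u-1)^2/(2\max(u,1))$), but the stated symmetric form with $m+\lambda$ in the denominator is cleaner for subsequent use and follows from the slightly stronger scalar inequality above.
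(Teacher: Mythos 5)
Your proposal is correct and follows essentially the same route as the paper: the Chernoff/Markov bound on the Poisson tails reduces both cases to $e^{-\lambda\psi(m/\lambda)}$ with $\psi(x)=1-x+x\log x$, and the key step is the same scalar inequality $\psi(x)\ge (1-x)^2/(2(1+x))$. The only (immaterial) difference is how that inequality is verified — you use a second-derivative test on $g(u)=\psi(u)-(u-1)^2/(2(u+1))$ (and indeed $g''(u)=1/u-4/(u+1)^3\ge0$ follows from $(u+1)^2\ge 4u$), whereas the paper rewrites it as $\int_0^x\log(1+t)\,\dd t\ge x^2/(2(2+x))$ and uses $\log(1+t)\ge t/(1+t)$.
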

\begin{proof}
Let $r=m/\lambda$. If $m\ge \lambda$, then
\[
    Z(m)\le \mathbb{P}(\zeta_\lambda\ge m)
    \le r^{-m}e^{\lambda(r-1)}=e^{-\lambda\psi(m/\lambda)},
\]
where $\psi(x) := 1-x+x\log x$. We now prove that
\begin{align}\label{xlogx}
    \psi(x) \ge \frac{(1-x)^2}{2(1+x)}
    \qquad(x>0),
\end{align}
or, equivalently,
\[
    \int_0^x \log(1+t)\dd t \ge \frac{x^2}{2(2+x)}
    \qquad(x>-1).
\]
To prove (\ref{xlogx}), observe first that $\log(1+t) \ge t/(1+t)$
for $t>-1$ since $\int_0^t \log(1+v)\dd v\ge0$. Then
\[
    \int_0^x \log(1+t) \dd t \ge \int_0^x \frac{t}{1+t}
    \, \dd t,
\]
which is bounded below by $x^2/(2(2+x))$ by considering the two
cases $x\ge0$ and $x\in(-1,0]$. Thus, by (\ref{xlogx}),
\[
    Z(m) \le e^{-(m-\lambda)^2/(2(m+\lambda))}.
\]
Similarly, if $m\le \lambda$, then $r<1$, and
\[
    Z(m)\le P(\xi_\lambda\le m)
    \le r^{-m}e^{\lambda(r-1)}=e^{-\lambda\psi(m/\lambda)}
    \le e^{-(m-\lambda)^2/(2(m+\lambda))}.
\]
\end{proof}

\section{Applications. II. Second-order estimates}
\label{sec:App2}

We show in this section that the same approach we developed above
can be readily extended for obtaining higher order estimates. For
simplicity, we consider only the second-order estimates for which we
need only to refine Lemma \ref{P0-ineq}. From the formal expansion
(\ref{expansion}), we expect that
\[
    \mathbb{P}(S_n=m)-e^{-\lambda}\frac{\lambda^m}{m!}
    \approx a_2 e^{-\lambda}\frac{\lambda^m}{m!}
    C_2(\lambda,m) +\text{smaller order terms},
\]
where $a_2 = -\lambda_2/2$, and the error terms for Poisson
approximation would be smaller if we take the term $a_2 e^{-\lambda}
\lambda^m C_2(\lambda,m)/m!$ into account.
\begin{lmm} For any complex numbers $\{v_k\}$, the following
inequality holds
\begin{equation}\label{ineq-P1}
\begin{split}
    \left| \prod_{1\le k\le n}(1+v_k)e^{-v_k}
    -1+\frac{1}{2}\sum_{1\le k\le n}v_k^2\right|
    \le \left(\frac{c_1}{4} V_2^2
    +c_2V_3 \right) e^{V_2/2},
\end{split}
\end{equation}
where $V_m$ is defined in (\ref{Vm}), $c_1 = \sqrt{e}-1$ and (see
(\ref{cm}))
\[
    c_2 = \frac12\int_0^1 e^{t^2/2}(1-t^2) dt
    \approx 0.3706.
\]
\end{lmm}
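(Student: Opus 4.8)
The plan is to mimic the proof of Lemma~\ref{P0-ineq}, but carrying the Taylor expansion of $(1+v)e^{-v}$ one order further so that the quadratic term $-v_k^2/2$ gets pulled out of the product. The starting point is the partial-summation/telescoping identity (\ref{partial-summ}): writing $\xi_k=(1+v_k)e^{-v_k}$ and noting $-\frac12\sum_k v_k^2$ is itself a ``sum over $k$'' of the increments of the partial sums, I would express
\[
    \prod_{1\le k\le n}(1+v_k)e^{-v_k} -1+\frac12\sum_{1\le k\le n}v_k^2
\]
as a single sum over $k$ of terms of the form
$\bigl((1+v_k)e^{-v_k}-1+\tfrac12 v_k^2\bigr)\prod_{j<k}(1+v_j)e^{-v_j}$ plus a correction coming from the fact that the prefactor in the telescoping is $\prod_{j<k}(1+v_j)e^{-v_j}$ rather than $1$; that correction is $\tfrac12\sum_k v_k^2\bigl(\prod_{j<k}(1+v_j)e^{-v_j}-1\bigr)$.

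Then each of the two resulting sums gets bounded separately. For the first, I invoke (\ref{ez-ineq2}) with $m=2$, which gives exactly $\bigl|(1+v)e^{-v}-1+\tfrac12 v^2\bigr|\le c_2|v|^3 e^{|v|^2/2}$ (note the identity $\sum_{0\le j\le 2}\frac{j-1}{j!}(-v)^j = -1+\tfrac12 v^2$, so this is the right Taylor remainder), and combine it with (\ref{ez-ineq}) bounding $\prod_{j<k}|(1+v_j)e^{-v_j}|\le e^{V_2/2}$; summing over $k$ produces the $c_2 V_3\,e^{V_2/2}$ term. For the second (correction) sum, I bound $\bigl|\prod_{j<k}(1+v_j)e^{-v_j}-1\bigr|$ by Lemma~\ref{P0-ineq} itself — it is at most $c_1 V_2 e^{V_2/2}$ — and then $\tfrac12\sum_k|v_k|^2\cdot c_1 V_2 e^{V_2/2} = \tfrac{c_1}{2}V_2^2 e^{V_2/2}$. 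This yields a bound $\bigl(\tfrac{c_1}{2}V_2^2 + c_2 V_3\bigr)e^{V_2/2}$, which is weaker than the claimed $\bigl(\tfrac{c_1}{4}V_2^2 + c_2 V_3\bigr)e^{V_2/2}$ by a factor $2$ in the first coefficient, so some extra care is needed.

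The main obstacle is precisely recovering the sharper constant $c_1/4$ in front of $V_2^2$. The fix is to not apply Lemma~\ref{P0-ineq} as a black box to the partial product, but to expand it one more step: in the correction sum, replace $\prod_{j<k}(1+v_j)e^{-v_j}-1$ using (\ref{partial-summ}) again, getting $\sum_{j<k}\bigl((1+v_j)e^{-v_j}-1\bigr)\prod_{i<j}(1+v_i)e^{-v_i}$, bound each factor $|(1+v_j)e^{-v_j}-1|\le c_1|v_j|^2 e^{|v_j|^2/2}$ by (\ref{ez-ineq2}) with $m=1$ and each partial product by $e^{V_2/2}$, so the correction sum is at most $\tfrac{c_1}{2}\sum_k|v_k|^2\sum_{j<k}|v_j|^2\,e^{V_2/2} = \tfrac{c_1}{2}\bigl(\sum_{j<k}|v_j|^2|v_k|^2\bigr)e^{V_2/2}$. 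Since $\sum_{j<k}|v_j|^2|v_k|^2 = \tfrac12\bigl(V_2^2 - V_4\bigr)\le \tfrac12 V_2^2$, this gives $\tfrac{c_1}{4}V_2^2 e^{V_2/2}$, which is the required constant. Combining the two estimates completes the proof.
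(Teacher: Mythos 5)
Your proposal is correct and follows essentially the same route as the paper: the identical telescoping decomposition via (\ref{partial-summ}), the bound (\ref{ez-ineq2}) with $m=2$ for the $c_2V_3$ term, and the symmetrization $\sum_{j<k}|v_j|^2|v_k|^2\le \tfrac12 V_2^2$ to recover the constant $c_1/4$. The paper obtains that last step by applying (\ref{Ch-ineq1}) to the truncated sequence $\{v_j\}_{j<k}$ while retaining the partial sum $\sum_{j<k}|v_j|^2$, which is exactly the ``unfolded'' version you describe in your fix.
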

\begin{proof}
By (\ref{partial-summ}),
\begin{align*}
    \prod_{1\le k\le n} (1+v_k) e^{-v_k} - 1
    +\frac12\sum_{1\le k\le n} v_k^2
    &=\sum_{1\le k\le n}\left(
    (1+v_k)e^{-v_k} - 1+\frac{v_k^2}{2}\right)
    \prod_{1\le j<k} (1+v_j)e^{-v_j}\\
    &\qquad-\frac12 \sum_{1\le k\le n}v_k^2\left(
    \prod_{1\le j<k}(1+v_k)e^{-v_k} - 1\right).
\end{align*}
By (\ref{ez-ineq}), (\ref{ez-ineq2}) with $m=2$ and
(\ref{Ch-ineq1}), we then obtain
\begin{align*}
    \left|\prod_{1\le k\le n} (1+v_k) e^{-v_k} - 1
    +\frac12\sum_{1\le k\le n} v_k^2\right|
    &\le c_2\sum_{1\le k\le n}|v_k|^3 \exp\left(
    \frac12\sum_{1\le j\le k} |v_j|^2\right)\\
    &\qquad+\frac{c_1}2 \sum_{1\le k\le n}|v_k|^2
    \sum_{j<k} |v_j|^2 \exp\left(
    \frac12\sum_{1\le j< k} |v_j|^2\right),
\end{align*}
and (\ref{ineq-P1}) follows.
\end{proof}

For simplicity, let
\[
    P_1(z) := e^{\lambda(z-1)}
    \left(1-\frac{\lambda_2}{2}(z-1)^2\right).
\]
Then
\begin{align} \label{P1z}
    [z^m] P_1(z) &= e^{-\lambda}\frac{\lambda^m}{m!}
    \left(1-\frac{\lambda_2}{2}C_2(m,\lambda)\right),\\
    [z^m] \frac{P_1(z)}{1-z} &= \sum_{j\le m}
    e^{-\lambda}\frac{\lambda^j}{j!}+
    \frac{\lambda_2}{2}C_1(m,\lambda)e^{-\lambda}
    \frac{\lambda^m}{m!} \nonumber,
\end{align}
where $C_1, C_2$ are given in (\ref{C1C2}).

With the inequality (\ref{ineq-P1}) and Proposition \ref{int-ineq},
we can now refine Theorem~\ref{thm-P0} as follows.
\begin{thm}\label{thm-P1} For $\theta<1$, we have the following
second-order estimates for $\chi^2$-, total variation and
Wasserstein distances, respectively,
\[
    \sum_{m\ge0}\frac{\left(\mathbb{P}(S_n=m)
    -[z^m]P_1(z)\right)^2}
    {e^{-\lambda}\frac{\lambda^m}{m!}}
    \le\left(\frac{\sqrt{3}\,c_1  \theta^2}
    {\sqrt{2}(1-\theta)^{5/2}}+
    \frac{\sqrt{6}\,c_2\lambda_3}
    {\lambda^{3/2}(1-\theta)^2}\right)^2,
\]
\[
    \frac12\sum_{m\ge0}\left|\mathbb{P}(S_n=m)-
    [z^m]P_1(z)\right|
    \le \frac{\sqrt{3}\,c_1  \theta^2}
    {2\sqrt{2}(1-\theta)^{5/2}}+
    \frac{\sqrt{3}\,c_2\lambda_3}
    {\sqrt{2}\lambda^{3/2}(1-\theta)^2},
\]
\[
    \sum_{m\ge0}\left|\mathbb{P}(S_n\le m) -
    [z^m]\frac{P_1(z)}{1-z}\right|
    \le \sqrt{\lambda}\left(
    \frac{\sqrt{3}c_1 \theta^2}{2\sqrt{2}(1-\theta)^{2}}
    +\frac{\sqrt{2}\,c_2\lambda_3}{\lambda^{3/2}
    (1-\theta)^{3/2}}\right);
\]
and the second-order non-uniform estimates for Kolmogorov distance
and point metric, respectively,
\begin{align*}
    \left|\mathbb{P}(S_n\le m) -[z^m]\frac{P_1(z)}{1-z}\right|
    \le \sqrt{Z(m)}\left(\frac{\sqrt{3}\,c_1  \theta^2}
    {\sqrt{2}(1-\theta)^{5/2}}+
    \frac{\sqrt{6}\,c_2\lambda_3}
    {\lambda^{3/2}(1-\theta)^2}\right),
\end{align*}
\begin{align*}
    \left|\mathbb{P}(S_n=m)-[z^m]P_1(z)\right|
    \le \sqrt{\frac{Z(m)}{\lambda}}
    \left(\frac{\sqrt{15}\,c_1 \theta^2}{\sqrt{2}(1-\theta)^{3}}
    +\frac{2\sqrt{6}\, c_2\lambda_3}
    {\lambda^{3/2}(1-\theta)^{5/2}}\right).
\end{align*}
\end{thm}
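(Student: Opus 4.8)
The plan is to mirror exactly the proof of Theorem~\ref{thm-P0}, but with the refined Lemma on $\prod(1+v_k)e^{-v_k}-1+\tfrac12\sum v_k^2$ (inequality~(\ref{ineq-P1})) in place of Lemma~\ref{P0-ineq}. First I would apply the Charlier-Parseval identity (\ref{Ch-Par}) and the Tauberian bounds of Proposition~\ref{int-ineq} to the generating function $G(z) := F(z) - P_1(z)$, where $F$ is the Poisson-binomial generating function (\ref{PB-Fz}). Writing $F(z) = e^{\lambda(z-1)}\prod_{1\le j\le n}(1+p_j(z-1))e^{-p_j(z-1)}$ and using (\ref{P1z}), we see that $G(z) = e^{\lambda(z-1)} g(z)$ with
\[
    g(z) = \prod_{1\le j\le n}(1+p_j(z-1))e^{-p_j(z-1)}
    - 1 + \frac{\lambda_2}{2}(z-1)^2,
\]
which is exactly the quantity controlled by (\ref{ineq-P1}) with $v_j = p_j(z-1)$. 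Note $G(1) = 0$, since both $F(1)=1$ and $P_1(1)=1$, so the inequalities (\ref{int-ineq-3}) and (\ref{int-ineq-4}) (and hence (\ref{int-ineq-2})) are available.

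The core computation is the bound for $I(r) = \frac{1}{2\pi}\int_{-\pi}^\pi |g(1+re^{it})|^2\,dt$. Setting $v_j = p_j r e^{it}$ we have $V_2 = \lambda_2 r^2$ and $V_3 = \lambda_3 r^3$, so (\ref{ineq-P1}) gives the pointwise bound
\[
    |g(1+re^{it})| \le \left(\frac{c_1}{4}\lambda_2^2 r^4
    + c_2 \lambda_3 r^3\right) e^{\lambda_2 r^2/2},
\]
and therefore, using $(a+b)^2 \le 2a^2 + 2b^2$,
\[
    I(r) \le \left(\frac{c_1^2}{8}\lambda_2^4 r^8
    + 2c_2^2 \lambda_3^2 r^6\right) e^{\lambda_2 r^2},
\]
(or one keeps the un-split form $(\tfrac{c_1}{4}\lambda_2^2 r^4 + c_2\lambda_3 r^3)^2 e^{\lambda_2 r^2}$ and uses Minkowski's inequality on the two resulting integrals, which is in fact what yields the stated sums of two terms rather than a single term). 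Then each of the five displayed estimates follows by substituting into the corresponding Tauberian inequality and evaluating a Gamma integral: for the $\chi^2$-estimate use (\ref{int-ineq-1})-type bound via (\ref{Ch-Par}) with $\int_0^\infty I(\sqrt{r/\lambda})e^{-r}\,dr$, which produces $\int_0^\infty r^k e^{-r(1-\theta)}\,dr = k!/(1-\theta)^{k+1}$ for $k=8$ and $k=6$; for total variation use (\ref{int-ineq-1}); for Wasserstein use (\ref{int-ineq-3}) with the extra factor $r^{-1}$; for Kolmogorov use (\ref{int-ineq-4}); and for the point metric use (\ref{int-ineq-2}) with the extra factor $r$. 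Applying Minkowski (triangle inequality for the $L^2$-type quantities, or just $\sqrt{a+b}\le\sqrt a+\sqrt b$ after the integral) to the two-term bound for $I$ converts each into a sum of two terms; the powers of $\theta$, $\lambda$, and $(1-\theta)$, together with the numerical constants $\sqrt 3$, $\sqrt 6$, $\sqrt{15}$, then come out by bookkeeping of $\sqrt{k!}$ factors (e.g. $\sqrt{8!/2} = \sqrt{20160}$, but the exponents are arranged so that $\theta^2 = \lambda_2^2/\lambda^2$ absorbs the appropriate powers of $\lambda$, and $\lambda_3/\lambda^{3/2}$ appears from $\lambda_3/\lambda^{3/2}$ after pulling out $\lambda^{-3/2}$ from the second term).

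I would carry out the five cases in the same order as the statement: (i) the $\chi^2$-bound directly from (\ref{Ch-Par}); (ii) total variation from (\ref{int-ineq-1}); (iii) Wasserstein from (\ref{int-ineq-3}); (iv) Kolmogorov from (\ref{int-ineq-4}); (v) point metric from (\ref{int-ineq-2}). In each case the integral reduces to $\int_0^\infty r^{2a} e^{-r(1-\theta)}\,dr$ for the $\lambda_2$-term and $\int_0^\infty r^{2b} e^{-r(1-\theta)}\,dr$ for the $\lambda_3$-term (with $a,b$ shifted by $\pm 1$ or $\pm\tfrac12$ depending on which Tauberian inequality and on the $r^{\pm 1}$ weight), each equal to a factorial over a power of $1-\theta$.

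The main obstacle — and it is really just careful bookkeeping rather than a conceptual difficulty — is tracking how the constants $\sqrt 3$, $\sqrt 6$, $\sqrt{15}$, and $2\sqrt 6$ in the final bounds emerge. The cleanest route is \emph{not} to split $(a+b)^2 \le 2a^2+2b^2$ on $I(r)$ but to keep $I(r) \le (\tfrac{c_1}{4}\lambda_2^2 r^4 + c_2\lambda_3 r^3)^2 e^{\lambda_2 r^2}$ and then, after substituting $r \mapsto \sqrt{r/\lambda}$ and multiplying by the appropriate weight $e^{-r}$, $re^{-r}$, or $r^{-1}e^{-r}$, apply Minkowski's inequality in $L^2(e^{-r}\,dr)$ (or the relevant weighted space) to the sum of the two monomial terms; this gives $\bigl(\int (\cdots)\bigr)^{1/2} \le (\text{term}_1)^{1/2} + (\text{term}_2)^{1/2}$, each a clean Gamma integral. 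The factor $\sqrt 3$ then arises from $\sqrt{c_1^2 \cdot 3!/(\cdots)} / (c_1/\text{const})$-type simplifications — more precisely, from $\frac12\sqrt{\int_0^\infty (\tfrac{c_1}{4})^2\lambda_2^4 (r/\lambda)^4 r^{\,k} e^{-r(1-\theta)}\,dr}$-style evaluations where the relevant factorial (e.g. $4! = 24$ for the $\chi^2$ case, giving $\tfrac14\sqrt{24} = \sqrt{6}/\ldots$, combined with the $\tfrac12$ from $c_1/4$ versus $c_1/\sqrt 2$) produces the stated numerical coefficient; the $\lambda_3/\lambda^{3/2}$ form appears because $\lambda_3 (r/\lambda)^{3/2} = \lambda_3 \lambda^{-3/2} r^{3/2}$. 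I would record, once, the identity $\int_0^\infty r^{k}e^{-r(1-\theta)}\,dr = k!\,(1-\theta)^{-k-1}$ and then let each of the five estimates drop out by substitution, citing (\ref{int-ineq-1})--(\ref{int-ineq-4}) and (\ref{Ch-Par}) and the reduction formulas (\ref{P1z}) for the subtracted main term.
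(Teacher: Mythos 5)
Your proposal matches the paper's proof essentially step for step: the same decomposition $F(z)-P_1(z)=e^{\lambda(z-1)}g(z)$, the same application of (\ref{ineq-P1}) with $v_j=p_j(z-1)$ yielding $I(r)\le\bigl(\tfrac{c_1}{4}\lambda_2^2r^4+c_2\lambda_3r^3\bigr)^2e^{\lambda_2r^2}$, and the same use of Minkowski's inequality on the unsplit square before invoking (\ref{Ch-Par}) and (\ref{int-ineq-1})--(\ref{int-ineq-4}) (your explicit check that $F(1)-P_1(1)=0$ is a point the paper leaves implicit). The only blemish is the passing reference to exponents $k=8$ and $k=6$ in the $r$-integral --- after the substitution $r\mapsto\sqrt{r/\lambda}$ these become $r^4$ and $r^3$ --- which your later bookkeeping with $4!=24$ and $\tfrac14\sqrt{24}=\sqrt{3}/\sqrt{2}$ silently corrects.
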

\begin{proof}
Let
\[
    F(z)=\prod_{1\le j\le n}(1+p_j(z-1))-e^{\lambda(z-1)}
    \left(1-\frac{\lambda_2}{2}(z-1)^2\right).
\]
Take $v_j=p_j(z-1)$ in inequality (\ref{ineq-P1}). Then
\[
    \left|\prod_{1\le j\le n}(1+p_j(z-1))e^{-p_j(z-1)}
    -1-\frac{\lambda_2}{2}(z-1)^2\right|
    \le \left(\frac{c_1 }{4}\lambda_2^2|z-1|^4+
    c_2 \lambda_3|z-1|^3\right)
    e^{\frac{\lambda_2}{2}|z-1|^2}.
\]
It follows that
\begin{align}\label{Ir-2nd}
    I(r)\le \left(\frac{c_1 }{4}\lambda_2^2r^4
    +c_2\lambda_3r^3\right)^2 e^{{\lambda_2}r^2}.
\end{align}
Substituting this upper bound into the identity (\ref{Ch-Par}) and
using the relation (\ref{P1z}), we obtain
\[
\begin{split}
    &\left(\sum_{m\ge0}\frac{\left(\mathbb{P}(S_n=m)
    -[z^m]P_1(z)\right)^2}
    {e^{-\lambda}\frac{\lambda^m}{m!}}\right)^{1/2} \\
    &\qquad\le \left(\int_0^\infty\left(\frac{c_1 }{4}\theta^2r^2
    +\frac{c_2\lambda_3}{\lambda^{3/2}}r^{3/2}\right)^2
    e^{-(1-\theta)r}\,dr\right)^{1/2}\\
    &\qquad\le\frac{c_1 }{4}\theta^2\left(\int_0^\infty
    r^4e^{-(1-\theta)r}\,dr\right)^{1/2}
    +\frac{c_2\lambda_3}{\lambda^{3/2}}
    \left(\int_0^\infty r^3e^{-(1-\theta)r}\,dr\right)^{1/2}\\
    &\qquad=\frac{c_1 }{4}\theta^2\cdot
    \frac{\sqrt{24}}{(1-\theta)^{5/2}}
    +\frac{c_2\lambda_3}{\lambda^{3/2}}\cdot\frac{\sqrt{6}}
    {(1-\theta)^2},
\end{split}
\]
where we used the Minkowsky inequality. This proves the second-order
estimate for the $\chi^2$-distance.

Similarly, the corresponding estimates for the total variation
distance and the (non-uniform estimate of the) Kolmogorov distance
follow from (\ref{Ir-2nd}) and the two inequalities
(\ref{int-ineq-1}) and (\ref{int-ineq-4}), respectively.

For the point metric, we have, using again (\ref{Ir-2nd}) and the
inequality (\ref{int-ineq-2}),
\[
\begin{split}
    &\sqrt{\frac{\lambda}{Z(m)}}
    \left|\mathbb{P}(S_n=m)-[z^m]P_1(z)\right|\\
    &\qquad\le \left(\int_0^\infty
    \left(\frac{c_1 }{4}\theta^2r^2
    +\frac{c_2\lambda_3}{\lambda^{3/2}}r^{3/2}
    \right)^2re^{-(1-\theta)r}\,dr\right)^{1/2}\\
    &\qquad\le \frac{c_1 }{4}\theta^2
    \left(\int_0^\infty r^5e^{-(1-\theta)r}\,dr\right)^{1/2}
    +\frac{c_2\lambda_3}{\lambda^{3/2}}\left(\int_0^\infty
    r^4e^{-(1-\theta)r}\,dr\right)^{1/2}\\
    &\qquad=\frac{\sqrt{15}\,c_1 \theta^2}{\sqrt{2}(1-\theta)^{3}}
    +\frac{2\sqrt{6}\,c_2\lambda_3}
    {\lambda^{3/2}(1-\theta)^{5/2}}.
\end{split}
\]
Finally, the second-order estimate for the Wasserstein distance
follows from (\ref{Ir-2nd}) and the inequality (\ref{int-ineq-3})
\[
\begin{split}
    &\lambda^{-1/2}
    \sum_{m\ge0}\left|\mathbb{P}(S_n\le m)
    -[z^m]\frac{P_1(z)}{1-z}\right|\\
    &\qquad \le \left(\int_0^\infty
    \left(\frac{c_1 }{4}\theta^2r^2+\frac{c_2\lambda_3}
    {\lambda^{3/2}}r^{3/2}\right)^2r^{-1}
    e^{-(1-\theta)r}\,dr\right)^{1/2}\\
    &\qquad\le \frac{c_1 }{4}\theta^2
    \left(\int_0^\infty r^3e^{-(1-\theta)r}\,dr\right)^{1/2}
    +\frac{c_2\lambda_3}
    {\lambda^{3/2}}\left(\int_0^\infty r^2e^{-(1-\theta)r}
    \,dr\right)^{1/2}.
\end{split}
\]
\end{proof}

\begin{cor} The total variation distance between the distribution of
$S_n$ and a Poisson distribution of mean $\lambda$ satisfies, for
$\theta<1$,
\begin{equation}\label{dtv-P1}
    \dtv(S_n,\mathcal{P}(\lambda))
    \le \frac{\theta}{2^{3/2}}+
    \frac{\sqrt{3}\,c_1  \theta^2}
    {2\sqrt{2}(1-\theta)^{5/2}}+
    \frac{\sqrt{3}\,c_2\lambda_3}
    {\sqrt{2}\lambda^{3/2}(1-\theta)^2}.
\end{equation}
\end{cor}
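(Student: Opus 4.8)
The plan is to pass from $\mathscr{L}(S_n)$ to $\Po(\lambda)$ through the signed measure whose $m$-th mass is $[z^m]P_1(z)$, using the triangle inequality
\[
    \dtv(\mathscr{L}(S_n),\Po(\lambda)) \le \frac12\sum_{m\ge0}
    \bigl|\mathbb{P}(S_n=m)-[z^m]P_1(z)\bigr|
    +\frac12\sum_{m\ge0}\left|[z^m]P_1(z)
    -e^{-\lambda}\frac{\lambda^m}{m!}\right|.
\]
The first sum on the right is precisely the second-order total variation estimate already established in Theorem~\ref{thm-P1}, and it contributes the two terms $\dfrac{\sqrt3\,c_1\theta^2}{2\sqrt2(1-\theta)^{5/2}}$ and $\dfrac{\sqrt3\,c_2\lambda_3}{\sqrt2\,\lambda^{3/2}(1-\theta)^2}$ appearing in (\ref{dtv-P1}). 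So the only remaining work is to estimate the second sum, which measures how far the ``corrected Poisson'' signed measure $P_1$ sits from the genuine Poisson law.

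For this, I would use the first identity in (\ref{P1z}), namely
\[
    [z^m]P_1(z)-e^{-\lambda}\frac{\lambda^m}{m!}
    =-\frac{\lambda_2}{2}\,e^{-\lambda}\frac{\lambda^m}{m!}
    \,C_2(\lambda,m),
\]
which gives
\[
    \frac12\sum_{m\ge0}\left|[z^m]P_1(z)
    -e^{-\lambda}\frac{\lambda^m}{m!}\right|
    =\frac{\lambda_2}{4}\sum_{m\ge0}e^{-\lambda}
    \frac{\lambda^m}{m!}\,\bigl|C_2(\lambda,m)\bigr|.
\]
Now invoke the $L^1$-bound (\ref{id1-CPI}) (itself an immediate consequence of the orthogonality relation and Cauchy--Schwarz) with $k=2$, yielding $\sum_{m\ge0}e^{-\lambda}\lambda^m/m!\,|C_2(\lambda,m)|\le\sqrt{2}/\lambda$. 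Hence the second sum is at most $\dfrac{\lambda_2}{4}\cdot\dfrac{\sqrt2}{\lambda}=\dfrac{\sqrt2\,\theta}{4}=\dfrac{\theta}{2^{3/2}}$, and adding this to the bound supplied by Theorem~\ref{thm-P1} gives exactly (\ref{dtv-P1}).

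I do not expect any real obstacle here: all the analytic content is already contained in Theorem~\ref{thm-P1}, and this corollary is essentially a repackaging via one triangle inequality. The only point worth flagging is that the crude estimate (\ref{id1-CPI}) — rather than an exact evaluation of $\sum_m e^{-\lambda}\lambda^m/m!\,|C_2(\lambda,m)|$ — is already good enough, because the prefactor $\lambda_2$ carries the factor $\theta$ needed to make this a genuine first-order-in-$\theta$ contribution; a sharper handling of that sum would only improve the constant $2^{-3/2}$ and is not worth pursuing.
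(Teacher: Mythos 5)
Your proposal is correct and coincides with the paper's own argument: the authors likewise split via the signed measure $P_1$, bound the first sum by the second-order total variation estimate of Theorem~\ref{thm-P1}, and control the remaining term $\frac{\lambda_2}{4}\sum_m e^{-\lambda}\frac{\lambda^m}{m!}|C_2(\lambda,m)|$ by the $L^1$-bound (\ref{id1-CPI}) with $k=2$, giving exactly $\theta/2^{3/2}$. Your closing remark about the crude bound being adequate is also borne out by the paper's subsequent Remark, which notes that Roos's sharper inequality (\ref{Roos-ineq}) would only improve the constant $2^{-3/2}$ to $3/(4e)$.
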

\begin{proof}
By (\ref{id1-CPI}) with $k=2$, we have
\[
    \frac12\sum_{m\ge0} e^{-\lambda}
    \frac{\lambda^m}{m!}|C_2(\lambda,m)|
    \le\frac{1}{\sqrt{2}\lambda},
\]
and (\ref{dtv-P1}) follows from the second-order estimate for the
total variation distance in Theorem~\ref{thm-P1}.
\end{proof}

\begin{rem}
One can easily derive, by the difference equation (\ref{diff-eq}) of
Charlier polynomials with $k=1$, that (see for example
\cite{Hwang99})
\[
    \frac12\sum_{m\ge0} e^{-\lambda}
    \frac{\lambda^m}{m!}|C_2(\lambda,m)|
    =e^{-\lambda} \left(\frac{\lambda^{m_+-1}}{m_+!}
    (m_+-\lambda) + \frac{\lambda^{m_--1}}{m_-!}(\lambda-m_-)
    \right),
\]
where $m_\pm := \lfloor \lambda+\frac12\pm
\sqrt{\lambda+\frac14}\rfloor$. Asymptotically, for large $\lambda$,
\[
    \frac12\sum_{m\ge0} e^{-\lambda}\frac{\lambda^m}{m!}
    |C_2(\lambda,m)| = \frac{\sqrt{2}}{\sqrt{\pi e}\, \lambda}
    \left(1+O\left(\lambda^{-1}\right)\right).
\]
By a detailed calculus, Roos \cite{Roos01} showed that
\begin{align}\label{Roos-ineq}
    \frac12\sum_{m\ge0} e^{-\lambda}
    \frac{\lambda^m}{m!} |C_2(\lambda,m)|
    \le \frac{3}{2e\lambda},
\end{align}
where numerically
\[
    \left\{\frac1{\sqrt{2}},\frac{3}{2e},
    \frac{\sqrt{2}}{\sqrt{\pi e}}\right\}
    \approx \left\{0.707, 0.552,0.484\right\}.
\]
Of course, we can apply Roos's inequality (\ref{Roos-ineq}) and
replace the constant $1/2^{3/2}\approx 0.354\ldots$ by
$3/(4e)\approx 0.276\ldots$ in the first term of our inequality
(\ref{dtv-P1}).
\end{rem}

\begin{cor} The $\chi^2$-distance satisfies
\begin{align}\label{cor-d-chi2}
    d_{\chi^2}(\mathscr{L}(S_n),\Po(\lambda))
    = \frac{\theta^2}{2}\left(1+O\left(
    \frac{\theta}{(1-\theta)^{5}}\right)\right).
\end{align}
\end{cor}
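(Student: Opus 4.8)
The plan is to extract the claimed asymptotic identity $d_{\chi^2}=\frac{\theta^2}{2}(1+O(\theta/(1-\theta)^5))$ directly from the exact series representation in the first corollary, namely $d_{\chi^2}=\sum_{j\ge2}(j!/\lambda^j)|a_j|^2$, where the $a_j$ are the Charlier-Jordan coefficients of $F(z)-e^{\lambda(z-1)}=e^{\lambda(z-1)}\sum_{j\ge2}a_j(z-1)^j$ for the Poisson-binomial $F$. The idea is to isolate the leading term $j=2$ and bound the tail $j\ge3$.

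First I would compute $a_2$ explicitly: expanding $\prod_j(1+p_j(z-1))e^{-p_j(z-1)}=1-\frac{\lambda_2}{2}(z-1)^2+\cdots$ shows $a_2=-\lambda_2/2$ (this is already noted in the text preceding Lemma~4.1), so the $j=2$ term of the series is $\frac{2!}{\lambda^2}\cdot\frac{\lambda_2^2}{4}=\frac{\theta^2}{2}$. That gives the main term on the nose. Then I would estimate the remainder $\sum_{j\ge3}(j!/\lambda^j)|a_j|^2$. The natural tool is the Charlier-Parseval identity of Proposition~\ref{parseval}/Proposition~\ref{int-ineq} applied to the function $g(z):=\prod_j(1+p_j(z-1))e^{-p_j(z-1)}-1+\frac{\lambda_2}{2}(z-1)^2$, whose Taylor-around-$1$ coefficients are precisely the $a_j$ for $j\ge3$ (since $a_0=a_1=0$ as well, $F$ and $e^{\lambda(z-1)}$ having matching mean). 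By the refined Lemma~4.1 (inequality (\ref{ineq-P1})) with $v_k=p_k(z-1)$ we have $|g(z)|\le(\frac{c_1}{4}\lambda_2^2|z-1|^4+c_2\lambda_3|z-1|^3)e^{\lambda_2|z-1|^2/2}$, hence $I_g(r)\le(\frac{c_1}{4}\lambda_2^2r^4+c_2\lambda_3r^3)^2e^{\lambda_2 r^2}$, and by (\ref{Parseval}) the tail $\sum_{j\ge3}(j!/\lambda^j)|a_j|^2$ equals $\int_0^\infty I_g(\sqrt{r/\lambda})e^{-r}\,dr$, which is $O(\theta^2\cdot\theta/(1-\theta)^5)$ after carrying out the Gamma-integral bounds (the worst term being $\frac{c_1^2}{16}\theta^4\int_0^\infty r^4e^{-(1-\theta)r}\,dr\asymp\theta^4/(1-\theta)^5$, and dividing by the main term $\theta^2/2$ yields the $\theta/(1-\theta)^5$ factor; the $c_2\lambda_3$-term contributes $\theta^2\lambda_3^2/(\lambda^3(1-\theta)^4)=O(\theta^3/(1-\theta)^4)$, absorbed into the same $O$). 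Therefore $d_{\chi^2}=\frac{\theta^2}{2}+O(\theta^4/(1-\theta)^5)=\frac{\theta^2}{2}(1+O(\theta/(1-\theta)^5))$, as claimed.

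I would also double-check the one subtlety: that $g$ has no $(z-1)^0$, $(z-1)^1$, or $(z-1)^2$ component, so that the integral genuinely represents only the $j\ge3$ tail. The $(z-1)^0$ and $(z-1)^1$ coefficients of $\prod_j(1+p_j(z-1))e^{-p_j(z-1)}$ are $1$ and $0$ respectively (the log of the product is $\sum_j(\log(1+p_j(z-1))-p_j(z-1))=-\frac{\lambda_2}{2}(z-1)^2+O((z-1)^3)$), and subtracting $-\frac{\lambda_2}{2}(z-1)^2$ kills the quadratic term; so indeed $g(z)=\sum_{j\ge3}a_j(z-1)^j$. This matches the structure of inequality (\ref{ineq-P1}), whose right-hand side starts at order $|z-1|^3$, confirming consistency.

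The main obstacle is essentially bookkeeping rather than anything deep: one must be careful that the remainder estimate is expressed in a form that divides cleanly into $\theta^2/2$ to give exactly the advertised error $O(\theta/(1-\theta)^5)$, and that the $\lambda_3$-dependent terms (which involve $\lambda_3/\lambda^{3/2}$ rather than a pure function of $\theta$) are correctly bounded using $\lambda_3\le\lambda_2\,p_*\le\lambda_2$ and hence $\lambda_3^2/\lambda^3\le\lambda_2^2/\lambda^2\cdot(\lambda_2/\lambda)=\theta^3$, so that they too become $O(\theta^3)$ and are dominated by the stated error term for $\theta<1$. No genuinely new estimate beyond Lemma~4.1 and the Charlier-Parseval identity is needed.
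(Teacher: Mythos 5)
Your argument is correct and is essentially the paper's own proof: peeling off the $j=2$ term of the identity $d_{\chi^2}=\sum_{j\ge2}(j!/\lambda^j)|a_j|^2$ to get $\theta^2/2$ exactly, and bounding the tail $\sum_{j\ge3}$ via the Charlier--Parseval identity applied to $g$ together with inequality (\ref{ineq-P1}) is precisely the content of the displayed identity in the paper's proof combined with the first estimate of Theorem~\ref{thm-P1} and the observation $\lambda_3\le\lambda_2^{3/2}$. (There is only a harmless slip in your bookkeeping: the $c_2\lambda_3$-term contributes $\lambda_3^2/(\lambda^3(1-\theta)^4)\le\theta^3/(1-\theta)^4$, without the extra $\theta^2$ factor you wrote, which does not affect the conclusion.)
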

\begin{proof}
Note that
\[
    0\le\sum_{m\ge0}\frac{\left(\mathbb{P}(S_n=m)
    -[z^m]P_1(z)\right)^2}
    {e^{-\lambda}\frac{\lambda^m}{m!}}
    =\sum_{m\ge0}\frac{\left(\mathbb{P}(S_n=m)
    -{e^{-\lambda}\frac{\lambda^m}{m!}}\right)^2}
    {e^{-\lambda}\frac{\lambda^m}{m!}}-\frac{\theta^2}{2}.
\]
This identity together with the first estimate of Theorem
\ref{thm-P1} and an observation that $\lambda_3\le \lambda_2^{3/2} $
yields (\ref{cor-d-chi2}).
\end{proof}

\begin{rem}
An alternative way to prove (\ref{cor-d-chi2}) is to use the
identity (\ref{id-d-chi2}) and apply the estimate for the
coefficients $a_j$ derived in Shorgin \cite{Shorgin77}
\begin{equation}\label{shorgins-bd}
    |a_j|\le \left(\frac{e\lambda_2}{j}\right)^{j/2}
    \qquad(j\ge2),
\end{equation}
and obtain
\begin{align*}
    \sum_{j\ge3}\frac{j!}{\lambda^j}|a_j|^2
    \le \sum_{j\ge3}  j! (e/j)^j\theta^j
    = O\left(\sum_{j\ge3}  j^{1/2}\theta^j\right),
\end{align*}
by Stirling's formula $j!=O(j^{1/2} (j/e)^j)$, $j\ge1$. This and
$a_2=-\lambda_2/2$ give
\begin{align}\label{d-chi-1}
    d_{\chi^2}(\mathscr{L}(S_n),\Po(\lambda))
    = \frac{\theta^2}{2}\left(1+O\left(
    \frac{\theta}{(1-\theta)^{3/2}}\right)\right).
\end{align}
\end{rem}
For a further refinement of (\ref{cor-d-chi2}), see Corollary
\ref{d-chi2-2}. Note that (\ref{d-chi-1}) implies that
\[
    \dkl(\mathscr{L}(S_n),\Po(\lambda)) \le
    \frac{\theta^2}2\left(1+O\left(
    \frac{\theta}{(1-\theta)^{3/2}}\right)\right).
\]

\section{Applications. III. Approximations by signed measures}
\label{sec:App3}

Since the probability generating function of $S_n$ can be
represented as
\[
    \mathbb{E}z^{S_n} = \exp\left(\sum_{j\ge1}
    \frac{(-1)^{j-1}}{j}\,\lambda_j(z-1)^j\right),
\]
it is well-known since Herrmann \cite{Herrmann65} that smaller error
terms can be achieved if we use finite number of terms in the
exponent to approximate $\mathbb{E}z^{S_n}$; namely,
\[
    \mathbb{E}z^{S_n} \approx \exp\left(\sum_{1\le j\le k}
    \frac{(-1)^{j-1}}{j}\,\lambda_j(z-1)^j\right),
\]
for $k\ge1$. Anther advantage of such approximations is that the
remainder terms tend to zero not only when $\theta\to0$ but also
when $\lambda\to \infty$ (while $\theta$ remaining, say less than
$1-\ve$, $\ve>0$ being a small number). This gives rise to Poisson
approximation via signed measures (sometimes also referred to as
compound Poisson approximations); see Cekanavicius
\cite{Cekanavicius97}, Roos \cite{Roos04}, Barbour et al.\
\cite{BCX07} for more information.

Although these approximations are not probability generating
functions for $k\ge2$, they can numerically and asymptotically be
readily computed. Indeed, for $k=2$
\[
    [z^m]e^{\lambda(z-1)-\lambda_2(z-1)^2/2}
    = e^{-\lambda-\lambda_2/2}
    \frac{\lambda_2^{m/2}}{m!}\,H_m\left(
    \frac{\lambda+\lambda_2}{\sqrt{\lambda_2}}\right),
\]
where the $H_m(x)$'s are the Hermite polynomials.

\subsection{Approximation by
$e^{\lambda(z-1)-\lambda_2(z-1)^2/2}$}

We consider the simplest case of such forms when $k=2$.
\begin{lmm} The inequality
\begin{align}\label{ineq-P2}
    \left|\prod_{1\le k\le n} (1+v_k) e^{-v_k}
    -\exp\left(-\frac12\sum_{1\le k\le n} v_k^2\right)\right|
    \le \left( c_2 V_3 + \frac18V_4\right) e^{V_2/2}
\end{align}
holds for any complex numbers $\{v_k\}$, where $V_m$ is given in
(\ref{Vm}) and $c_2$ in (\ref{cm}).
\end{lmm}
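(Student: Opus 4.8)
The plan is to follow exactly the telescoping template already used for Lemma~\ref{P0-ineq} and for the second-order refinement (\ref{ineq-P1}): write the difference of the two products as a single sum via the partial-summation identity (\ref{partial-summ}), bound each mismatched factor by a power of $|v_k|$ times an exponential, and control the remaining product factors by (\ref{ez-ineq}).

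Concretely, I would set $\xi_k := (1+v_k)e^{-v_k}$ and $\eta_k := e^{-v_k^2/2}$, so that the left-hand side of (\ref{ineq-P2}) equals $\bigl|\prod_{1\le k\le n}\xi_k - \prod_{1\le k\le n}\eta_k\bigr|$. Applying (\ref{partial-summ}) turns this into $\bigl|\sum_{1\le k\le n}(\xi_k-\eta_k)\bigl(\prod_{j<k}\xi_j\bigr)\bigl(\prod_{j>k}\eta_j\bigr)\bigr|$. For the surviving product factors, (\ref{ez-ineq}) gives $|\xi_j|\le e^{|v_j|^2/2}$, while $|\eta_j| = e^{-\operatorname{Re}(v_j^2)/2}\le e^{|v_j|^2/2}$ because $\operatorname{Re}(v_j^2)\ge -|v_j|^2$; together they contribute at most $\exp\bigl(\tfrac12\sum_{j\ne k}|v_j|^2\bigr)$ in each summand.

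The heart of the argument is the single-factor estimate
\[
    \bigl|(1+v)e^{-v} - e^{-v^2/2}\bigr|
    \le \Bigl(c_2|v|^3 + \tfrac18|v|^4\Bigr)e^{|v|^2/2}
    \qquad(v\in\mathbb{C}),
\]
which I would obtain by splitting
\[
    (1+v)e^{-v} - e^{-v^2/2}
    = \Bigl[(1+v)e^{-v} - 1 + \tfrac{v^2}{2}\Bigr]
      + \Bigl[1 - \tfrac{v^2}{2} - e^{-v^2/2}\Bigr].
\]
The first bracket is precisely the quantity bounded in (\ref{ez-ineq2}) with $m=2$, since $\sum_{0\le j\le 2}\tfrac{j-1}{j!}(-v)^j = -1+\tfrac{v^2}{2}$, so it is at most $c_2|v|^3 e^{|v|^2/2}$. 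For the second bracket I set $w:=-v^2/2$ and use the elementary remainder bound $|e^w-1-w|\le\tfrac12|w|^2e^{|w|}$ (e.g.\ from $e^w-1-w = w^2\int_0^1 e^{tw}(1-t)\,dt$ together with $|e^{tw}|\le e^{|w|}$); since $|w|=|v|^2/2$ this gives $\le\tfrac18|v|^4 e^{|v|^2/2}$.

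Feeding $|\xi_k-\eta_k|\le (c_2|v_k|^3+\tfrac18|v_k|^4)e^{|v_k|^2/2}$ into the telescoped sum makes the total exponent in each summand equal to $\tfrac12\sum_{j\ne k}|v_j|^2 + \tfrac12|v_k|^2 = \tfrac12 V_2$, so the whole expression is bounded by $e^{V_2/2}\sum_{1\le k\le n}(c_2|v_k|^3+\tfrac18|v_k|^4) = e^{V_2/2}\bigl(c_2 V_3 + \tfrac18 V_4\bigr)$ with the notation (\ref{Vm}), which is (\ref{ineq-P2}). I do not expect a genuine obstacle; the only mildly delicate points are matching the polynomial $-1+\tfrac{v^2}{2}$ with the general formula in (\ref{ez-ineq2}) and checking $|e^{-v^2/2}|\le e^{|v|^2/2}$ for complex $v$, everything else being the by-now-routine telescoping estimate.
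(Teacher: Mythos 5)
Your proposal is correct and follows essentially the same route as the paper: the telescoping identity (\ref{partial-summ}) with $\xi_k=(1+v_k)e^{-v_k}$, $\eta_k=e^{-v_k^2/2}$, the single-factor splitting into $[(1+v)e^{-v}-1+\tfrac{v^2}{2}]$ handled by (\ref{ez-ineq2}) with $m=2$ and the remainder $e^{-v^2/2}-1+\tfrac{v^2}{2}$ handled by the integral form of Taylor's theorem, yielding exactly $c_2|v|^3+\tfrac18|v|^4$ times $e^{|v|^2/2}$. The details (including the bound $|e^{-v^2/2}|\le e^{|v|^2/2}$ for the trailing factors) all check out.
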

\begin{proof}
Again by (\ref{partial-summ}),
\begin{align*}
    &\prod_{1\le k\le n} (1+v_k) e^{-v_k}
    -\prod_{1\le k\le n} e^{-v_k^2/2} \\
    &\qquad=\sum_{1\le k\le n}
    \left((1+v_k)e^{-v_k} - e^{-v_k^2/2}\right)
    \left(\prod_{1\le j<k} (1+v_j)e^{-v_j}\right)
    \left(\prod_{k<j\le n} e^{-v_j^2/2} \right).
\end{align*}
Now
\begin{align*}
    \left|(1+z)e^{-z}- e^{-z^2/2}\right|
    &= \left|(1+z)e^{-z} -1+\frac{z^2}2
    -\left(e^{-z^2/2}-1+\frac{z^2}2\right)\right|\\
    &=\left|-\frac{z^3}{2} \int_0^1 (1-t^2) e^{-tz} dt
    - \frac{z^4}{4} \int_0^1 (1-t) e^{-tz^2/2} dt\right|\\
    &\le c_2 |z|^3 e^{|z|^2/2} + \frac{|z|^4}{8} e^{|z|^2/2}.
\end{align*}
This and the inequality (\ref{ez-ineq}) yield (\ref{ineq-P2}).
\end{proof}

Let
\[
    P_2(z) := e^{\lambda(z-1)-\lambda_2(z-1)^2/2}.
\]
\begin{thm}\label{thm-P2}
Assume that $\theta < 1$. Then
\begin{align*}
    \sum_{m\ge0}\frac{\left(\mathbb{P}(S_n=m)
    -[z^m]P_2(z)\right)^2}{e^{-\lambda}\frac{\lambda^m}{m!}}
    &\le \frac{\lambda_3^2}{\lambda^{3}}
    \left(\frac{\sqrt{6}\,c_2}{(1-\theta)^2}
    +\frac{\sqrt{3\theta}}{2\sqrt{2}(1-\theta)^{5/2}}\right)^2,\\
    \sum_{m\ge0}\left|\mathbb{P}(S_n=m)-[z^m]P_2(z)\right|
    &\le\frac{\lambda_3}{\lambda^{3/2}}\left(
    \frac{\sqrt{6}\,c_2}{(1-\theta)^2}
    +\frac{\sqrt{3\theta}}{2\sqrt{2}(1-\theta)^{5/2}}\right),\\
    \sum_{m\ge0}\left|\mathbb{P}(S_n\le m)
    -[z^m]\frac{P_2(z)}{1-z}\right|
    &\le \frac{\lambda_3}{\lambda}\left(
    \frac{\sqrt{2}\,c_2}{(1-\theta)^{3/2}}
    +\frac{\sqrt{3\theta}}{4\sqrt{2}(1-\theta)^{2}}\right),\\
    \left|\mathbb{P}(S_n\le m)-[z^m]\frac{P_2(z)}{1-z}\right|
    &\le \frac{\lambda_3}{\lambda^{3/2}}\sqrt{Z(m)}\left(
    \frac{\sqrt{6}\,c_2}{(1-\theta)^2}
    +\frac{\sqrt{3\theta}}{2\sqrt{2}(1-\theta)^{5/2}}\right),\\
    \left|\mathbb{P}(S_n=m)-[z^m]P_2(z)\right|
    &\le \frac{\lambda_3}{\lambda^{2}}\sqrt{Z(m)}
    \left(\frac{2\sqrt{6}\,c_2}{(1-\theta)^{5/2}}
    +\frac{\sqrt{15\theta}}{2\sqrt{2}(1-\theta)^{3}}\right).
\end{align*}
\end{thm}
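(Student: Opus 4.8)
The plan is to mimic exactly the proof of Theorem~\ref{thm-P1}, with Lemma producing the bound (\ref{ineq-P2}) in place of (\ref{ineq-P1}). First I would set
\[
    F(z) = \prod_{1\le j\le n}(1+p_j(z-1)) - P_2(z)
    = e^{\lambda(z-1)}\left(\prod_{1\le j\le n}(1+p_j(z-1))e^{-p_j(z-1)}
    - e^{-\lambda_2(z-1)^2/2}\right),
\]
so that $f(z) = \prod_{1\le j\le n}(1+p_j(z-1))e^{-p_j(z-1)} - e^{-\lambda_2(z-1)^2/2}$. Taking $v_j = p_j(z-1)$ in (\ref{ineq-P2}) gives $V_m = \lambda_m|z-1|^m$, hence $|f(z)| \le \bigl(c_2\lambda_3|z-1|^3 + \tfrac18\lambda_4|z-1|^4\bigr)e^{\lambda_2|z-1|^2/2}$, and with $z = 1+re^{it}$ this yields, exactly as in (\ref{Ir-2nd}),
\[
    I(r) \le \left(c_2\lambda_3 r^3 + \tfrac18\lambda_4 r^4\right)^2
    e^{\lambda_2 r^2}.
\]
The $\lambda_4$-term is not what appears in the statement; the trick (which the paper has used repeatedly) is to bound $\lambda_4 \le \lambda_2\lambda_3/p_*\le\dots$ — more simply, to note $\tfrac18\lambda_4 r^4 \le$ something comparable, but in fact I expect the cleaner route is $\lambda_4 \le \lambda_3$ is false; rather one uses $r^4 e^{\lambda_2 r^2}$ type integrals and absorbs $\lambda_4/\lambda^2 \le \theta\lambda_3/\lambda^{3/2}\cdot(\text{stuff})$. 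Let me instead just say: after the substitution $r\mapsto\sqrt{r/\lambda}$ one gets terms in $\lambda_3/\lambda^{3/2}$ and $\lambda_4/\lambda^2$, and since $\lambda_4 \le \lambda_2\lambda_3/\lambda_1^{?}$... the precise bookkeeping is $\lambda_4\le \lambda_3\cdot p_* \le \lambda_3$, giving $\lambda_4/\lambda^2 \le \theta\cdot\lambda_3/\lambda^{3/2}\cdot\lambda^{-1/2}\cdot\dots$; in any case $\tfrac18\lambda_4 r^4$ merges into the $\theta$-coefficient so that after scaling we obtain an integrand bounded by $\bigl(\tfrac{c_2\lambda_3}{\lambda^{3/2}}r^{3/2} + \tfrac{\lambda_3}{\lambda^{3/2}}\cdot\tfrac{\sqrt\theta}{8}r^2\bigr)^2 e^{-(1-\theta)r}$ up to the right constants.

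Then for each of the five distances I would substitute this $I$-bound into the appropriate inequality from the toolbox: (\ref{Ch-Par}) for the $\chi^2$-distance, (\ref{int-ineq-1}) for total variation, (\ref{int-ineq-3}) for Wasserstein, (\ref{int-ineq-4}) for the non-uniform Kolmogorov bound, and (\ref{int-ineq-2}) for the non-uniform point metric — noting that $F(1)=0$ (since both $\prod(q_j+p_j) = 1$ and $P_2(1)=1$), as required by (\ref{int-ineq-3}) and (\ref{int-ineq-4}). In each case the resulting integral splits, via Minkowski's inequality, into two Gamma-type integrals $\int_0^\infty r^\alpha e^{-(1-\theta)r}\,dr = \Gamma(\alpha+1)/(1-\theta)^{\alpha+1}$ with appropriate $\alpha$: for $\chi^2$ and Kolmogorov the weight is $e^{-r}$ giving $\alpha \in\{3,4\}$ hence $\sqrt{\Gamma(4)} = \sqrt6$ and $\sqrt{\Gamma(5)} = \sqrt{24} = 2\sqrt6$ in the first slot and $\sqrt{\Gamma(6)}/\dots$; for Wasserstein the extra $r^{-1}$ lowers $\alpha$ by one; for the point metric the extra $r$ raises it by one. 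Reading off the constants $\sqrt{\Gamma(4)} = \sqrt6$, $\tfrac12\sqrt{\Gamma(5)} = \sqrt6$, $\tfrac1{2\sqrt2}\sqrt{\Gamma(5)} = \tfrac{\sqrt{24}}{2\sqrt2} = \sqrt3$, etc., reproduces the five displayed right-hand sides, with the $\sqrt{Z(m)}$ factor appearing precisely in the Kolmogorov and point-metric lines because those come from (\ref{int-ineq-4}) and (\ref{int-ineq-2}).

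The one genuinely delicate point — and the step I expect to be the main obstacle — is handling the $\tfrac18 V_4$ term in (\ref{ineq-P2}) so that it collapses into the stated $\sqrt\theta$-coefficient rather than producing a spurious $\lambda_4$ in the final bounds. The clean way is: since $p_j\le 1$ for all $j$, one has $\lambda_4 = \sum p_j^4 \le \sum p_j^3 = \lambda_3$, and also $\lambda_3 \le \sum p_j^2 = \lambda_2$, so $\lambda_4/\lambda^2 \le \lambda_3/\lambda^2 = (\lambda_3/\lambda^{3/2})\lambda^{-1/2}$; but to get the $\sqrt\theta$ one instead uses $\lambda_4 \le \lambda_2\,\max_j p_j^2 \le \lambda_2\cdot(\lambda_2/\lambda_1)\cdot\dots$ — no: the correct elementary bound is $\lambda_4 = \sum p_j^4 \le \bigl(\max_j p_j^2\bigr)\lambda_2 \le \theta\lambda\cdot\max_j p_j \le \theta\lambda_2$, hence $\lambda_4/\lambda^2 \le \theta\cdot\lambda_3/\lambda^{3/2}\cdot(\lambda_2/(\lambda_3\lambda^{1/2}))$, and since $\lambda_2/(\lambda_3\sqrt\lambda)\ge \dots$; the cleanest truly is $\lambda_4\le \lambda_3\,p_*\le\lambda_3$ combined with $\lambda_3\le\lambda_2$, giving after the $r\mapsto r/\lambda$ scaling a factor $\lambda_4/\lambda^2 \le \sqrt\theta\cdot(\lambda_3/\lambda^{3/2})$ because $\lambda_3^{1/2}/\lambda_2^{1/2}\cdot\dots$. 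I would present this as: \emph{using $\lambda_4\le\lambda_2\lambda_3/\lambda_1\cdot\lambda_1/\lambda_2 = \lambda_3$ and then $\lambda_3^2/\lambda^3 \le \theta\lambda_3^2/\lambda^{3}\cdot\theta^{-1}$}, i.e. simply substitute $\tfrac18\lambda_4 r^4 \le \tfrac18\theta\lambda_3 r^4/\lambda^{1/2}$ after noting $\lambda_4\le\theta\lambda_3$ (valid since $p_j^4 = p_j^2\cdot p_j^2 \le p_j^2\cdot p_* \le p_j^2\theta$ is false in general — one needs $p_*\le\theta$, which need not hold). The honest resolution: the inequality (\ref{ineq-P2}) should be applied with the understanding that $\tfrac18 V_4 \le \tfrac18 V_2\cdot\max|v_k|^2 \le \tfrac18 V_2 V_2 = \tfrac18 V_2^2$ when all $|v_k|\le 1$... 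This accounting is the crux, and once the $V_4$ contribution is correctly bounded by a constant times $\sqrt\theta\,\lambda_3/\lambda^{3/2}$ times the appropriate power of $r$, the rest is the same mechanical Gamma-integral computation as in Theorem~\ref{thm-P1}.
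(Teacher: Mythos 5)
Your overall architecture is exactly the paper's: the same $F(z)$, the same bound $I(r)\le\bigl(c_2\lambda_3 r^3+\tfrac18\lambda_4 r^4\bigr)^2 e^{\lambda_2 r^2}$ obtained from (\ref{ineq-P2}) with $v_j=p_j(z-1)$, and the same routing of the five estimates through (\ref{Ch-Par}) and Proposition~\ref{int-ineq}, followed by Minkowski's inequality and Gamma integrals. However, the step you yourself identify as the crux --- converting the $\tfrac18\lambda_4 r^4$ term into the $\sqrt{\theta}$-coefficient appearing in the statement --- is left genuinely unresolved. Every candidate inequality you try is false, circular, or insufficient: $\lambda_4\le\theta\lambda_2$ fails (take $p_1=1$, $p_2=\varepsilon$); $p_j^2\le\theta$ need not hold, as you note; and $\lambda_4\le\lambda_3$ (which \emph{is} true, since $p_j\le1$, contrary to your first assertion) only yields $\lambda_4/\lambda^2\le(\lambda_3/\lambda^{3/2})\cdot\lambda^{-1/2}$, and $\lambda^{-1/2}\le\sqrt{\theta}$ would require $\lambda_2\ge1$, which need not hold. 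The bound via $\tfrac18V_4\le\tfrac18V_2^2$ also fails because $|v_k|=p_k r$ is not bounded by $1$ over the whole range of integration.

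The missing one-liner, which is what the paper uses, is
\[
    \lambda_4=\sum_{1\le j\le n} p_j^3\cdot p_j
    \le p_*\,\lambda_3\le\sqrt{\lambda_2}\,\lambda_3,
\]
the last step because $p_*^2\le\sum_j p_j^2=\lambda_2$. After the substitution $r\mapsto\sqrt{r/\lambda}$ this gives precisely $\tfrac18\lambda_4(r/\lambda)^{2}\le\tfrac{\lambda_3}{\lambda^{3/2}}\cdot\tfrac{\sqrt{\theta}}{8}\,r^{2}$, matching the $\tfrac{c_2\lambda_3}{\lambda^{3/2}}r^{3/2}$ term, and the remainder of your computation ($\sqrt{\Gamma(4)}=\sqrt6$, $\sqrt{\Gamma(5)}/8=\sqrt{3/8}=\sqrt3/(2\sqrt2)$, the shifts of the exponent for the Wasserstein and point-metric cases, and the $\sqrt{Z(m)}$ factors from (\ref{int-ineq-4}) and (\ref{int-ineq-2})) goes through verbatim as in Theorem~\ref{thm-P1}. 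So the gap is a single inequality, but it is the one load-bearing step of the proof, and none of the substitutes you propose is valid.
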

\begin{proof}
All estimates follow similarly as the proof of Theorem~\ref{thm-P0}
but with
\[
    F(z)=\prod_{1\le j\le n} (1+p_j(z-1))
    -e^{\lambda(z-1)-\lambda_2(z-1)^2/2}.
\]
For the first two estimates of the theorem, we apply the inequality
(\ref{ineq-P2}), which gives
\[
    I(r)\le\left(c_2\lambda_3r^3+\frac{1}{8}\lambda_4r^4\right)^2
    e^{\lambda_2r^2}.
\]
By the inequality $\lambda_4\le \lambda_3\sqrt{\lambda_2}$, we
obtain
\[
\begin{split}
    \left(\sum_{m\ge0}\frac{\left(\mathbb{P}(S_n=m)
    -[z^m]P_2(z)\right)^2}{e^{-\lambda}
    \frac{\lambda^m}{m!}}\right)^{1/2}
    &\le \frac{\lambda_3}{\lambda^{3/2}}\left(\int_0^\infty
    \left(c_2r^{3/2}+\frac{\sqrt{\theta}}{8}r^2\right)^2
    e^{-(1-\theta)r^2}\right)^{1/2}\\
    &\le\frac{\lambda_3}{\lambda^{3/2}}
    \left(\frac{c_2\sqrt{6}}{(1-\theta)^2}
    +\frac{\sqrt{24\theta}}{8(1-\theta)^{5/2}}\right).
\end{split}
\]
Then we apply Proposition \ref{int-ineq}. The other estimates are
similarly proved.
\end{proof}

\begin{lmm} For any $\theta<1$, we have
\begin{align} \label{id-P2}
    \sum_{m\ge0}\frac{\left(e^{-\lambda}\frac{\lambda^m}{m!}
    -[z^m]P_2(z)\right)^2}{e^{-\lambda}\frac{\lambda^m}{m!}}
    =\frac{1}{\sqrt{1-\theta^2}}-1.
\end{align}
\end{lmm}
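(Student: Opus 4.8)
The plan is to apply the Charlier--Parseval identity \eqref{Parseval} directly to the difference $P_2(z)-e^{\lambda(z-1)}$. First I would write
\[
    P_2(z)-e^{\lambda(z-1)}
    = e^{\lambda(z-1)}\bigl(e^{-\lambda_2(z-1)^2/2}-1\bigr)
    = e^{\lambda(z-1)}\sum_{k\ge1}\frac{(-\lambda_2/2)^k}{k!}(z-1)^{2k},
\]
so that in the notation of Proposition~\ref{parseval} the relevant function is $f(z)=e^{-\lambda_2(z-1)^2/2}-1$, whose Charlier--Jordan coefficients are $a_{2k}=(-\lambda_2/2)^k/k!$ for $k\ge1$ and $a_j=0$ otherwise; in particular $a_0=0$, which is exactly why the numerator on the left of \eqref{id-P2} is $[z^m]\bigl(e^{\lambda(z-1)}f(z)\bigr)$, using that $e^{-\lambda}\lambda^m/m!=[z^m]e^{\lambda(z-1)}$ is precisely the $k=0$ contribution to $[z^m]P_2(z)$. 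Since $|e^{-\lambda_2(z-1)^2/2}|\le e^{\lambda_2|z-1|^2/2}$, the growth condition \eqref{fz-est1} holds with $H=\lambda_2/2$, and the requirement $\lambda>2H$ needed to invoke the identity is exactly the hypothesis $\theta<1$.

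Next I would invoke \eqref{Parseval} (equivalently \eqref{Ch-Par} applied to $P_2(z)-e^{\lambda(z-1)}$) to obtain
\[
    \sum_{m\ge0}\frac{\bigl(e^{-\lambda}\frac{\lambda^m}{m!}-[z^m]P_2(z)\bigr)^2}{e^{-\lambda}\frac{\lambda^m}{m!}}
    = \sum_{j\ge1}|a_j|^2\frac{j!}{\lambda^j}
    = \sum_{k\ge1}\Bigl(\frac{(\lambda_2/2)^k}{k!}\Bigr)^2\frac{(2k)!}{\lambda^{2k}}
    = \sum_{k\ge1}\binom{2k}{k}\Bigl(\frac{\theta^2}{4}\Bigr)^k,
\]
where I used $\lambda_2^{2k}/\lambda^{2k}=\theta^{2k}$ and $(2k)!/(k!)^2=\binom{2k}{k}$. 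Finally I would recall the elementary generating function $\sum_{k\ge0}\binom{2k}{k}x^k=(1-4x)^{-1/2}$, valid for $|x|<1/4$; taking $x=\theta^2/4$ gives $(1-\theta^2)^{-1/2}$, and subtracting the $k=0$ term yields the claimed value $(1-\theta^2)^{-1/2}-1$.

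There is essentially no hard step here; the only place requiring a moment's care is the bookkeeping that identifies the numerator on the left of \eqref{id-P2} with the coefficient of $z^m$ in $e^{\lambda(z-1)}f(z)$ for the $a_0$-free $f$ above (equivalently, the observation that the constant term of $e^{-\lambda_2(z-1)^2/2}$ reproduces exactly the Poisson weights). If one prefers to sidestep this, the same answer follows by expanding the square as $\sum_{m\ge0}([z^m]P_2)^2/(e^{-\lambda}\lambda^m/m!)-2P_2(1)+1$ and applying \eqref{Parseval} to $P_2$ itself (now $f=e^{-\lambda_2(z-1)^2/2}$ with $a_0=1$), giving $(1-\theta^2)^{-1/2}-2+1$; both routes need only $\theta<1$ for convergence.
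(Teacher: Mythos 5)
Your proof is correct and follows essentially the same route as the paper: apply the Charlier--Parseval identity to $P_2(z)-e^{\lambda(z-1)}$, read off $a_{2k}=(-\lambda_2/2)^k/k!$, and sum $\sum_{k\ge1}\binom{2k}{k}(\theta^2/4)^k$ via the central binomial generating function. Your verification of the growth condition with $H=\lambda_2/2$ (so that $\lambda>2H$ is exactly $\theta<1$) is a detail the paper leaves implicit, and your alternative computation via $P_2$ itself is a fine sanity check but not needed.
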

\begin{proof}
Applying (\ref{Ch-Par}) and (\ref{Parseval}) to the function
\[
    F(z)=e^{\lambda(z-1)}-P_2(z)
    =e^{\lambda(z-1)}\left(\sum_{k\ge1}
    \left(\frac{\lambda_2}{2}\right)^k
    \frac{(z-1)^{2k}}{k!}\right),
\]
we obtain
\[
\begin{split}
    \sum_{m\ge0} \frac{\left(e^{-\lambda}\frac{\lambda^m}{m!}
    -[z^m]P_2(z)\right)^2}{e^{-\lambda}\frac{\lambda^m}{m!}}
    &=\sum_{k\ge1}\left(\frac{\theta}{2}\right)^{2k}
    \frac{(2k)!}{(k!)^2}
    =\frac{1}{\sqrt{1-\theta^2}}-1.
\end{split}
\]
\end{proof}

\begin{cor} For $\theta<1$,
\begin{align} \label{d-chi2-2}
    \left|\left(\sum_{m\ge0} \frac{\left(\mathbb{P}(S_n=m)
    -e^{-\lambda}\frac{\lambda^m}{m!}\right)^2}
    {e^{-\lambda}\frac{\lambda^m}{m!}}\right)^{1/2}-
    \left(\frac{1}{\sqrt{1-\theta^2}}-1\right)^{1/2}\right|
    \le \frac{\lambda_3}{\lambda^{3/2}}
    \left(\frac{c_2\sqrt{6}}{(1-\theta)^2}
    +\frac{\sqrt{24\theta}}{8(1-\theta)^{5/2}}\right).
\end{align}
\end{cor}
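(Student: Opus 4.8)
The plan is to view the final corollary as a direct consequence of the triangle inequality combined with the two preceding results applied to the same signed-measure approximant $P_2(z)$. Specifically, write $u_m := \mathbb{P}(S_n=m)$, $\pi_m := e^{-\lambda}\lambda^m/m!$, and $w_m := [z^m]P_2(z)$, and regard the quantities
\[
    \left(\sum_{m\ge0}\frac{(u_m-\pi_m)^2}{\pi_m}\right)^{1/2}
    \quad\text{and}\quad
    \left(\sum_{m\ge0}\frac{(\pi_m-w_m)^2}{\pi_m}\right)^{1/2}
\]
as norms in the weighted $\ell^2$-space $L^2(\pi)$. First I would invoke the lemma giving the exact identity \eqref{id-P2}, so that the second norm above equals $(1/\sqrt{1-\theta^2}-1)^{1/2}$ exactly. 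Next I would invoke the first estimate of Theorem~\ref{thm-P2}, which bounds the \emph{third} norm $\bigl(\sum_m (u_m-w_m)^2/\pi_m\bigr)^{1/2}$ by the right-hand side $\tfrac{\lambda_3}{\lambda^{3/2}}\bigl(\tfrac{c_2\sqrt6}{(1-\theta)^2}+\tfrac{\sqrt{24\theta}}{8(1-\theta)^{5/2}}\bigr)$ appearing in \eqref{d-chi2-2}.

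The core step is then the reverse triangle inequality in $L^2(\pi)$: since $u-\pi = (u-w)+(w-\pi)$, we get
\[
    \Bigl|\,\|u-\pi\|_{L^2(\pi)} - \|w-\pi\|_{L^2(\pi)}\,\Bigr|
    \le \|u-w\|_{L^2(\pi)}.
\]
Substituting the exact value of $\|w-\pi\|_{L^2(\pi)}$ from \eqref{id-P2} and the upper bound for $\|u-w\|_{L^2(\pi)}$ from Theorem~\ref{thm-P2} yields \eqref{d-chi2-2} immediately. One should note the minor cosmetic point that \eqref{d-chi2-2} is written with $\tfrac{\sqrt{24\theta}}{8}$ rather than the equivalent $\tfrac{\sqrt{3\theta}}{2\sqrt2}$ that appears in the statement of Theorem~\ref{thm-P2}; these are the same constant ($\sqrt{24}/8 = \sqrt{3}/(2\sqrt2)$), so no extra work is needed.

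I do not expect any real obstacle here: the proof is essentially a one-line application of the triangle inequality once the two ingredients are in place. The only thing to be slightly careful about is making sure the three relevant $L^2(\pi)$-quantities are genuinely finite and that the decomposition $u-\pi=(u-w)+(w-\pi)$ is valid termwise and in norm — but this is guaranteed because $P_2$ satisfies the hypotheses needed for the Charlier--Parseval machinery (the function $f(z)=e^{-\lambda_2(z-1)^2/2}$ is entire and satisfies \eqref{fz-est1} with $H=\lambda_2/2$, and $\theta<1$ ensures $\lambda>2H$), so both $\|u-w\|_{L^2(\pi)}$ and $\|w-\pi\|_{L^2(\pi)}$ are finite, whence so is $\|u-\pi\|_{L^2(\pi)}$. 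With finiteness secured, the reverse triangle inequality is unconditional and the corollary follows.
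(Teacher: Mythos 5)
Your proposal is correct and is essentially identical to the paper's own proof: both apply the reverse triangle (Minkowski) inequality in the weighted $\ell^2$-space to the decomposition $u-\pi=(u-w)+(w-\pi)$, then substitute the exact value of $\|w-\pi\|$ from the identity \eqref{id-P2} and the bound on $\|u-w\|$ from the first estimate of Theorem~\ref{thm-P2}. Your observation that $\sqrt{24}/8=\sqrt{3}/(2\sqrt{2})$ correctly reconciles the constants between the theorem and the corollary.
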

\begin{proof}
By applying the Minkowsky inequality and the first estimate of
Theorem~\ref{thm-P2}, we obtain
\begin{align*}
    &\left|\left(\sum_{m\ge0} \frac{\left(\mathbb{P}(S_n=m)
    -e^{-\lambda}\frac{\lambda^m}{m!}\right)^2}
    {e^{-\lambda}\frac{\lambda^m}{m!}}\right)^{1/2}-
    \left(\sum_{m\ge0}\frac{\left(
    e^{-\lambda}\frac{\lambda^m}{m!}
    -[z^m]P_2(z)\right)^2}{e^{-\lambda}
    \frac{\lambda^m}{m!}}\right)^{1/2}\right|\\
    &\qquad \le \left(\sum_{m\ge0}\frac{\left(\mathbb{P}(S_n=m)
    -[z^m]P_2(z)\right)^2}
    {e^{-\lambda}\frac{\lambda^m}{m!}}\right)^{1/2}\\
    &\qquad \le\frac{\lambda_3}{\lambda^{3/2}}
    \left(\frac{c_2\sqrt{6}}{(1-\theta)^2}
    +\frac{\sqrt{24\theta}}{8(1-\theta)^{5/2}}\right).
\end{align*}
Consequently, by (\ref{id-P2}), we obtain (\ref{d-chi2-2}).
\end{proof}

Note that (\ref{d-chi2-2}) implies that, for all $\theta<1$,
\[
    d_{\chi^2}(\mathscr{L}(S_n),\Po(\lambda))
    =\left(\frac{1}{\sqrt{1-\theta^2}}-1\right)
    \left(1+O\left(\frac{\lambda_3}{\lambda_2
    \sqrt{\lambda}(1-\theta)^5}\right)\right).
\]

On the other hand, by the inequality $d_{\chi^2}\ge 4\dtv^2$ (which
following from (\ref{Parseval}) and (\ref{int-ineq-1})), we obtain
another upper bound for $\dtv$.
\begin{cor} For $\theta<1$,
\[
    \dtv(S_n,\mathcal{P}(\lambda))
    \le\frac{1}{2}\left(\frac{1}{\sqrt{1-\theta^2}}
    -1\right)^{1/2}+\frac{\lambda_3}{\lambda^{3/2}}
    \left(\frac{c_2\sqrt{6}}{2(1-\theta)^2}
    +\frac{\sqrt{24\theta}}{16(1-\theta)^{5/2}}\right).
\]
\end{cor}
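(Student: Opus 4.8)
The plan is to combine the elementary comparison $\dtv\le\tfrac12\sqrt{d_{\chi^2}}$ with the already-established near-identity (\ref{d-chi2-2}) for the $\chi^2$-distance; essentially no new estimation is required. First I would record the inequality $d_{\chi^2}\ge 4\dtv^2$ quoted just before the statement. With $A_m:=\mathbb{P}(S_n=m)-e^{-\lambda}\lambda^m/m!$ we have $F(z)-e^{\lambda(z-1)}=e^{\lambda(z-1)}f(z)$ with $f(z)=\prod_{1\le j\le n}(1+p_j(z-1))e^{-p_j(z-1)}-1$, which is entire and, by Lemma~\ref{P0-ineq}, satisfies $|f(z)|=O(|z-1|^2e^{\lambda_2|z-1|^2/2})$; hence (\ref{fz-est1}) holds for any $H>\lambda_2/2$, and since $\theta<1$ we may pick $\lambda>2H$ and invoke Proposition~\ref{int-ineq}. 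Then (\ref{int-ineq-1}) together with (\ref{Ch-Par}) gives
\[
    \sum_{m\ge0}\Bigl|\mathbb{P}(S_n=m)-e^{-\lambda}\tfrac{\lambda^m}{m!}\Bigr|
    \le\Biggl(\sum_{m\ge0}\frac{\bigl(\mathbb{P}(S_n=m)-e^{-\lambda}\lambda^m/m!\bigr)^2}{e^{-\lambda}\lambda^m/m!}\Biggr)^{1/2},
\]
whose left side is $2\,\dtv(\mathscr{L}(S_n),\Po(\lambda))$ and whose right side is $\sqrt{d_{\chi^2}(\mathscr{L}(S_n),\Po(\lambda))}$; that is, $\dtv\le\tfrac12\sqrt{d_{\chi^2}}$.

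Next I would bound $\sqrt{d_{\chi^2}(\mathscr{L}(S_n),\Po(\lambda))}$ from above, which is precisely what (\ref{d-chi2-2}) does: inserting the signed measure $[z^m]P_2(z)$ as an intermediate point and applying the Minkowski inequality for the $L^2$-norm against the weights $e^{-\lambda}\lambda^m/m!$ yields
\[
    \sqrt{d_{\chi^2}(\mathscr{L}(S_n),\Po(\lambda))}
    \le\Bigl(\tfrac1{\sqrt{1-\theta^2}}-1\Bigr)^{1/2}
    +\frac{\lambda_3}{\lambda^{3/2}}\Bigl(\frac{c_2\sqrt6}{(1-\theta)^2}+\frac{\sqrt{24\theta}}{8(1-\theta)^{5/2}}\Bigr),
\]
where the first term is the exact value of $\|e^{-\lambda}\lambda^m/m!-[z^m]P_2(z)\|$ furnished by (\ref{id-P2}) and the second is the bound on $\|\mathbb{P}(S_n=m)-[z^m]P_2(z)\|$ given by the first estimate of Theorem~\ref{thm-P2}.

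Finally, multiplying the last display by $\tfrac12$ and distributing the factor over the two summands ($\tfrac12 c_2\sqrt6=\tfrac{c_2\sqrt6}{2}$ and $\tfrac12\cdot\tfrac{\sqrt{24\theta}}{8}=\tfrac{\sqrt{24\theta}}{16}$) produces exactly the claimed inequality. There is essentially no obstacle here, since all the analytic work has already been carried out in Theorem~\ref{thm-P2} and in the exact evaluation (\ref{id-P2}); the only points that need attention are verifying that (\ref{int-ineq-1}) and (\ref{Ch-Par}) genuinely apply to $F(z)-e^{\lambda(z-1)}$ in the range $\theta<1$ (done above via Lemma~\ref{P0-ineq}), and checking that the factor $\tfrac12$ is split correctly between the Poisson-to-$P_2$ term and the $P_2$-to-$S_n$ term.
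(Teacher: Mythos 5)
Your proposal is correct and follows exactly the route the paper intends: combine $\dtv\le\tfrac12\sqrt{d_{\chi^2}}$ (from (\ref{Parseval}) and (\ref{int-ineq-1}), i.e.\ Cauchy--Schwarz) with the upper half of the two-sided bound (\ref{d-chi2-2}), which itself comes from Minkowski's inequality applied with $[z^m]P_2(z)$ as intermediate point, the exact evaluation (\ref{id-P2}), and the first estimate of Theorem~\ref{thm-P2}. The arithmetic in splitting the factor $\tfrac12$ over the two summands is also right, so nothing is missing.
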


\section{Comparative discussions}
\label{sec:cd}

We review briefly some known results in the literature and compare
them in this section. For simplicity, we write $d_{*}$ for
$d_{*}(\mathscr{L}(S_n),\Po(\lambda))$ throughout this section,
where $d_{*}$ represents one of the distances we discuss.

Among the five measures of closeness of Poisson approximation
$\{d_{\chi^2}, \dtv, d_W, d_K, d_P\}$, the estimation of the three
$\{d_{\chi^2}, d_K, d_P\}$ is generally simpler in complexity since
they can all be easily bounded above by explicit summation or
integral representations: see (\ref{id-d-chi2}) for $d_{\chi^2}$,
(\ref{dk-ir}) for $d_K$ and (\ref{dp-ir}) for $d_P$.

In addition to the Poisson approximations to $\mathscr{L}(S_n)$ we
consider in this paper, many other different types of approximations
to $\mathscr{L}(S_n)$ were proposed in the literature; these include
Poisson with different mean, compound Poisson, translated Poisson,
large deviations, other perturbations of Poisson, binomial, compound
binomial, etc. They are too numerous to be listed and compared here;
see, for example, Barbour et al.\ \cite{BHJ92}, Roos
\cite{Roos99,Roos07}, Barbour and Chryssaphinou \cite{BC01}, Barbour
and Chen \cite{BC05}, R\"ollin \cite{Rollin07} and the references
therein.

\subsection{The $\chi^2$-distance and the Kullback-Leibner
divergence}

Borisov and Vorozhe\v\i kin \cite{BV08} showed that $d_{\chi^2}\sim
\theta^2/2$ under the assumption that $\theta=o(\lambda^{-1/7})$.
They also derived in the same paper the identity (\ref{id-d-chi2})
in the special case when all $p_j$'s are equal. More refined
estimates were then given. The estimate (\ref{cor-d-chi2}) we
obtained is more general and stronger.

The Kullback-Leibner divergence has been widely studied in the
information-theoretic literature and many results are known. The
connection between $\dtv$ and $\dkl$ for general distributions also
received much attention since they can be used to bridge results in
probability theory and in information theory; see the survey paper
Fedotov et al.\ \cite{FHT03} for more information and references.
One such tool studied is Pinsker's inequality $\dtv\le
\sqrt{\dkl/2}$ (see \cite{FHT03}). Note that in the case of $S_n$,
this inequality implies that $\dtv\le \sqrt{d_{\chi^2}/2}$, while we
have $\dtv\le \sqrt{d_{\chi^2}}/2$ by (\ref{Parseval}) and
(\ref{int-ineq-1}).

Kontoyiannis et al.\ \cite{KHJ05} recently proved, by an
information-theoretic approach, that
\[
    \dkl \le \frac{1}{\lambda}\sum_{1\le j\le n}
    \frac{p_j^3}{1-p_j}.
\]
The right-hand side in the above inequality is, by Cauchy-Schwarz
inequality, always larger than $\theta^2$, provided that at least
one of the $p_j$'s is nonzero, and can be considerably larger than
our estimate (\ref{PB-dkl}) for certain cases. Indeed, take for
example $p_j=1/\sqrt{j+1}$. Then
\[
    \dkl \le \frac{1}{\lambda}\sum_{1\le j\le n}
    \frac{p_j^3}{1-p_j}\asymp \frac{1}{\sqrt{n}},
\]
where the symbol ``$a_n \asymp b_n$" means that $a_n$ is
asymptotically of the same order as $b_n$. Our result (\ref{PB-dkl})
yields in this case the estimate
\[
    \dkl \le \frac{2c_1^2\theta^2}{(1-\theta)^3}
    \asymp \frac{\log^2 n}{n}.
\]

\subsection{The total variation distance}

We mentioned in Introduction some results in Le Cam \cite{LeCam60}
and other refinements in the literature of the form $\dtv\le
c\theta$. We briefly review and compare here other results for
$\dtv$.

\paragraph{First- and second-order estimates.} Kerstan
\cite{Kerstan64}, in addition to proving that $\dtv\le 0.6\theta$
(which was later on corrected to $1.05$ by Barbour and Hall
\cite{BH84}), he also proved the second-order estimate
\[
    \sum_{j\ge0} \left|\mathbb{P}(S_n=j)
    -e^{-\lambda}\frac{\lambda^j}{j!}
    \left(1-\frac{\lambda_2}{2}C_2(\lambda,j)\right)
    \right| \le 1.3\frac{\lambda_3}{\lambda}+3.9\theta^2.
\]
Similar estimates were derived later in Herrmann \cite{Herrmann65},
Chen \cite{Chen75}, Barbour and Hall \cite{BH84}. The order of the
error terms is however not optimal for large $\lambda$; see
Theorem~\ref{thm-P1}.

Many fine estimates were obtained in the series of papers by
Deheuvels, Pfeifer and their co-authors. In particular, Deheuvels
and Pfeifer \cite{DP88} proved $\dtv\le \theta/(1-\sqrt{2\theta})$
for $\theta<1/2$ and the second-order estimate
\[
    \sum_{j\ge0} \left|\mathbb{P}(S_n=j)
    -e^{-\lambda}\frac{\lambda^j}{j!}
    \left(1-\frac{\lambda_2}{2}C_2(\lambda,j)\right)
    \right| \le \frac{(2\theta)^{3/2}}{1-\sqrt{2\theta}},
\]
for $\theta<1/2$, the order of the error terms being tight. For many
other estimates (including higher-order ones), see
\cite{DP88,DPR89}. Their approach is based on a semi-group
formulation, followed by applying the fine estimates of Shorgin
\cite{Shorgin77}, which in turn were obtained by the
complex-analytic approach of Uspensky \cite{Uspensky31}. Following a
similar approach, Witte \cite{Witte90} gives an upper bound of the
form
\[
    \dtv \le \frac{e^{2p_*}\theta}
    {\sqrt{2\pi}(1-2e^{2p_*}\theta)},
\]
for $\theta<\tfrac12 e^{-2p_*}$, as well as other more complicated
ones. Another very different form for $\dtv$ can be found in Weba
\cite{Weba99}, which results from combining several known estimates.

By refining further Deheuvels and Pfeifer's approach, Roos
\cite{Roos99,Roos01} deduced several precise estimates for $\dtv$
and other distances. In particular, he showed that
\[
    \dtv\le \left(\frac3{4e}+\frac{7(3-2\sqrt{\theta})}
    {6(1-\sqrt{\theta})^2}\,\sqrt{\theta}\right)\theta,
\]
when $\theta<1$; see \cite{Roos01} and the references therein. The
proof of this estimate is based on a second-order approximation; see
(\ref{Roos-ineq}).

Note that since $\dtv\le 1$, any result of the form $\dtv\le
\varphi(\theta)\theta$ for $\theta\le \theta_1$, $\theta_1\in(0,1)$,
also leads to an upper bound of the form $\dtv\le c\theta$, where
\[
    c = \sup_{0\le t\le \theta_0} \varphi(t),
\]
$\theta_0 := \min\{\theta_1,\theta_2\}$, $\theta_2\in(0,1)$ solving
the equation $t\varphi(t)=1$.

Higher-order approximations based on Charlier expansion are studied
in Herrmann \cite{Herrmann65}, Barbour \cite{Barbour87}, Deheuvels
and Pfeifer \cite{DP88}, Barbour et al.\ \cite{BHJ92}, Roos
\cite{Roos99,Roos04}.

\paragraph{Approximations by signed measures.} Herrmann
\cite{Herrmann65} proved that, when specializing to the case of
$S_n$,
\[
    \sum_{m\ge0}\left|\mathbb{P}(S_n=m)-
    [z^m]e^{\lambda(z-1)-\lambda_2(z-1)^2/2}\right|
    = O\left(\frac{\lambda_3} {\lambda}\right),
\]
the rate being $\lambda^{1/2}$ away from optimal; see
Theorem~\ref{thm-P2}. Presman \cite{Presman83} considered the
binomial case and derived an optimal error bound. Kruopis
\cite{Kruopis86} extended further Presman's analysis and derived
\begin{align*}
    &\sum_{m\ge0}\left|\mathbb{P}(S_n=m)-
    [z^m]e^{\lambda(z-1)-\lambda_2(z-1)^2/2}\right|\\
    &\qquad \le 10\varpi  \lambda_3 \min\left\{
    1.2\sigma^{-3}+4.2\lambda_2\sigma^{-6},
    2+\sigma^{2}+3.4\lambda_2\right\},
\end{align*}
where $\sigma := \sqrt{\lambda-\lambda_2}$ and
\begin{align} \label{varpi}
     \varpi := \max_{1\le j\le n}
     \sup_{0\le t\le 1} e^{2p_j t(1-p_jt)},
\end{align}
which was in turn refined by Borovkov \cite{Borovkov89}. Hipp
\cite{Hipp86} discussed similar expansions for compound Poisson
distributions and attributed the idea to Kornya \cite{Kornya83}, but
his bounds are weaker for large $\lambda$ in the special case of
$S_n$; see also \v Cekanavi\v cius \cite{Cekanavicius97}. Barbour
and Xia \cite{BX99} proved, as a special case of their general
results, that
\[
    \sum_{m\ge0}\left|\mathbb{P}(S_n=m)-
    [z^m]e^{\lambda(z-1)-\lambda_2(z-1)^2/2}\right|
    \le \frac{4\lambda_3}{\lambda^{3/2}(1-2\theta)
    \sqrt{1-\theta-\max_j p_j(1-p_j)/\lambda}},
\]
when $\theta<1/2$. An extensive study was carried out by \v
Cekanavi\v cius in a series of papers dealing mainly with
Kolmogorov's problem of approximating convolutions by infinitely
divisible distributions; see \v Cekanavi\v cius
\cite{Cekanavicius97,Cekanavicius04} and the references cited there.
Approximation results using signed compound measures under more
general settings than $S_n$ are derived in Borovkov and Pfeifer
\cite{BP96}, Roos \cite{Roos04,Roos07} and \v Cekanavi\v cius
\cite{Cekanavicius04}, Barbour et al.\ \cite{BCX07}.

\paragraph{Other uniform asymptotic approximations.} The estimate
$\dtv\sim \theta/\sqrt{2\pi e}$ holds whenever $\theta\to0$. A
uniform estimate of the form
\[
    \dtv =\theta J(\theta) \left(1+O\left(\lambda^{-1}
    \right)\right),
\]
as $\lambda\to\infty$, was recently derived in \cite{HZ08}, where
\[
    J(\theta) := \frac1\theta\left(\Phi\left(\sqrt{
    \frac1\theta\log\frac1{1-\theta}}\right)-
    \Phi\left(\sqrt{\frac{1-\theta}\theta\log\frac1{1-\theta}
    }\right)\right),
\]
$\Phi$ being the standard normal distribution function. Other more
general and more uniform approximations were also derived in
\cite{HZ08}.

\subsection{The Wasserstein distance}

Deheuvels and Pfeifer \cite{DP88} proved the asymptotic equivalent
$d_W \sim \lambda_2/\sqrt{2\pi \lambda}$, when
$\lambda_2/\sqrt{\lambda}\to\infty$, improving earlier results in
Deheuvels and Pfeifer \cite{DP86b}. They also obtained many other
estimates, including the following second-order one
\[
    \left| d_W - \lambda_2 e^{-\lambda}
    \frac{\lambda^{\lceil \lambda \rceil}}
    {\lceil \lambda \rceil!}\right| \le \frac{2^{5/2}
    \lambda^{1/2} \theta^{3/2}}{1-\sqrt{2\theta}},
\]
for $|\theta|\le 1/2$. Then Witte \cite{Witte90} gave the bound
\[
    d_W\le -\frac{\sqrt{e\lambda}}{2\sqrt{2\pi}}\,\log
    \left(1-2e^{2p_*}\theta\right),
\]
for $\theta<\tfrac12 e^{-2p_*}$. Xia \cite{Xia97} showed that $d_W
\le \lambda_2/\sqrt{\lambda(1-\theta)}$; see also Barbour and Xia
\cite{BX06} for the estimate $d_W\le
8\lambda_2/(3\sqrt{2e\lambda})$. The strongest results including
more precise higher-order approximations were derived by Roos (1999,
2001), where, in particular,
\[
    d_W \le \left(\frac1{\sqrt{2e}}+\frac{8(2-\theta)}
    {5(1-\sqrt{\theta})^2}\,\sqrt{\theta}
    \right) \frac{\lambda_2}{\sqrt{\lambda}}.
\]

For other results in connection with Wasserstein metrics, see
Deheuvels et al.\ \cite{DKPS88}, Hwang \cite{Hwang99}, \v Cekanavi\v
cius and Kruopis \cite{CK00}.

\subsection{The Kolmogorov distance}

It is known, by definition and Newton's inequality (see Comtet
\cite[p.\ 270]{Comtet74} or Pitman \cite{Pitman97}), that $d_K\le
\dtv\le 2d_K$; see Daley and Vere-Jones \cite{DV08}, Ehm
\cite{Ehm91}, Roos \cite{Roos01}. Thus all upper estimates for
$\dtv$ translate directly to those for $d_K$ and vice versa. Also
many approximation results in probability theory for sums of
independent random variables apply to $S_n$. Both types of results
are not listed and discussed here; see for example Arak and Za\u\i
tsev \cite{AZ88}.

Up to now, we only consider non-uniform bounds for $d_K$. However,
effective uniform bounds can be easily derived based on the Fourier
inversion formula
\begin{align}
    d_K &= \sup_m \left| \frac1{2\pi}\int_{-\pi}^{\pi}
    e^{-imt}\frac{\mathbb{E}e^{itS_n}-e^{\lambda(e^{it}-1)}}
    {1-e^{it}} dt \right| \nonumber \\
    &\le \frac1{2\pi} \int_{-\pi}^\pi \frac{e^{\lambda(\cos t-1)}}
    {|1-e^{it}|} \left|\prod_{1\le j\le n} \left(1+p_j(e^{it}-1)
    \right) e^{-p_j(e^{it}-1)}-1\right| dt. \label{dk-ir}
\end{align}
From (\ref{dk-ir}) and (\ref{Ch-ineq1}), we have
\[
    d_K \le \frac{c_1}{\pi} \lambda_2 \int_0^\pi
    \left| 1-e^{it}\right| e^{-\sigma^2 (1-\cos t)} dt,
\]
which, by the simple inequalities $|1-e^{it}|\le |t|$ and $1-\cos
t\ge 2 t^2/\pi^2$ for $t\in[-\pi,\pi]$, leads to
\[
    d_K \le \frac{c_1}{\pi}\lambda_2 \int_0^\infty
    t e^{-2\sigma^2 t^2/\pi^2} dt
    = \frac{c_1\pi \theta}{4(1-\theta)},
\]
where $c_1\pi/4\approx 0.51$. Although this bound is worse than some
known ones such as $d_K\le 0.36\theta$ in Daley and Vere-Jones
\cite{DV08}, its derivation is very simple and self-contained, the
order being also tight. Furthermore, the leading constant $c_1\pi/4$
can be lowered, say to $0.363c_1<0.24$, by a more careful analysis
but we are not pursuing this further here. Note that it is known
that $d_K \sim \theta/(2\sqrt{2\pi e})$, as $\theta=o(1)$, see
Deheuvels and Pfeifer \cite{DP88}, Hwang \cite{Hwang99}, where
$1/(2\sqrt{2\pi e}) \approx 0.121$.

In a little known paper, Makabe \cite{Makabe62} gives a systematic
study of $d_K$ using standard Fourier analysis, improving earlier
results by Kolmogorov \cite{Kolmogorov56}, Le Cam \cite{LeCam60},
Hodges and Le Cam \cite{HL60}. In particular, he first derived a
second-order estimate from which he deduced that $d_K \le 3.7\theta$
and
\[
    d_K \le \frac{\theta}{2}+O\left(\theta^2+p_*\theta
    \right).
\]
For $p_*<1/5$, he also provided a one-page proof of
\[
    d_K \le \frac{5\theta}{4(1-2p_*-5\theta/2)}
    \le\frac{25\theta}{12-50\theta}.
\]

A Le Cam-type inequality of the form $d_K\le 2\lambda_2/\pi$ was
given in Franken \cite{Franken64}, which was later refined to
$d_K\le \lambda_2/2$ in Serfling \cite{Serfling78}; see also Daley
\cite{Daley80}. Franken \cite{Franken64} also proves the estimate
\[
    d_K \le \frac{c}\pi \left(1-e^{-\lambda(1-\theta)}
    \right) \frac{\theta}{1-\theta},
\]
for an explicitly given $c$, as well as higher-order terms for $d_K$
based on Charlier expansions. His bound together with $d_K\le1$
implies $d_K\le 1.9\theta$, improving previous estimates by Le Cam
and Makabe.

Shorgin \cite{Shorgin77} derived an asymptotic expansion for the
distribution of $S_n$; in particular, as a simple application of his
bounds for $|a_j|$ (see (\ref{F-e-exp})) and $|C_k(\lambda,m)|$,
\[
    d_K\le \left(\frac12+\sqrt{\frac\pi 8}\right)
    \frac{\theta}{1-\sqrt{\theta}},
\]
where $1/2+\sqrt{\pi/8}\approx 1.31$. In Hipp \cite{Hipp85}, the
upper bound
\[
    d_K\le \frac{\pi}{4\lambda(1-\theta)}\,\sum_{1\le j\le n}
    \frac{p_j^2}{1-p_j},
\]
was given, so that if $p_*\le1/4$, then
\[
    d_K\le \frac{\pi \theta}{3(1-\theta)}\le
    \frac{1.05\theta}{1-\theta}.
\]
A bound of the form
\[
    d_K \le \frac{2}{\pi}\min\left\{ \frac{\sqrt{e}\theta}
    {2(1-\theta)}, \lambda_2\right\}
\]
was given in Kruopis \cite{Kruopis86}, where he also derived
\[
    \sup_m\left|\mathbb{P}(S_n\le m)
    -[z^m]\frac{P_2(z)}{1-z}\right|
    \le \frac23\varpi\lambda_3 \min\left\{\frac{1}
    {\sqrt{\pi}\lambda^{3/2}(1-\theta)^{3/2}},1\right\},
\]
where $\varpi$ is defined in (\ref{varpi}). Deheuvels and Pfeifer
deduced several estimates for $d_K$; in particular (see
\cite{DP88,DPR89})
\[
    \sup_m\left|\mathbb{P}(S_n\le m)
    -[z^m]\frac{P_1(z)}{1-z}\right|
    \le \frac53\left(\frac{\theta^2}
    {(1-\sqrt{\theta})}+\frac{\lambda_3}
    {\lambda^{3/2}}\right);
\]
Note that this can also be written as
\[
    \left|d_K - \frac{\theta}{2} \,e^{-\lambda}
    \max\left\{\frac{\lambda^{\ell_+}}{\ell_+!}
    (\ell_+-\lambda),\frac{\lambda^{\ell_-}}{\ell_-!}
    (\lambda-\ell_-)\right\}\right| \le
    \frac53\left(\frac{\theta^2}
    {(1-\sqrt{\theta})}+\frac{\lambda_3}
    {\lambda^{3/2}}\right),
\]
where $\ell_\pm := \lfloor \lambda+1/2\pm \sqrt{\lambda+1/4}
\rfloor$.

Witte \cite{Witte90} then derived the estimate
\[
    d_K \le \frac{\sqrt{e}(1+\sqrt{\pi/2})e^{2p_*}}
    {2\sqrt{2\pi}(1-e^{2p_*}\theta)}\,\theta,
\]
for $\theta<e^{-p_*}$; see also Weba \cite{Weba99}. Roos
\cite{Roos99,Roos01} gives, among several other fine estimates,
\[
    d_K \le \left(\frac1{2e} +\frac{6}
    {5(1-\sqrt{\theta})}\,\sqrt{\theta}\right) \theta.
\]

Non-uniform estimates are derived in Teerapabolarn and Neammanee
\cite{TN06} for general dependent summands, which is of the form in
the case of $S_n$
\[
    \left|\mathbb{P}(S_n\le m) - e^{-\lambda}\sum_{0\le j\le m}
    \frac{\lambda^j}{j!}\right| \le
    \left(1-e^{-\lambda}\right)\theta \min\left\{
    1, \frac{e^{\lambda}}{m+1}\right\},
\]
generally weaker than our bounds in Theorems~\ref{thm-P0} and
\ref{thm-P1}.

\subsection{The point probabilities}

As for $d_K$ above, the point metric can also be readily estimated
by using the integral representation
\begin{align} \label{dp-ir}
    d_P \le \frac1{2\pi} \int_{-\pi}^\pi e^{\lambda(\cos t-1)}
    \left|\prod_{1\le j\le n} \left(1+p_j(e^{it}-1)
    \right) e^{-p_j(e^{it}-1)}-1\right| dt,
\end{align}
and (\ref{Ch-ineq1}), and we obtain for example
\[
    d_P\le \frac{c_1\pi^{5/2} \theta}
    {8\sqrt{2\lambda} (1-\theta)^{3/2}}.
\]
Classical local limit theorems for probabilities of moderate or
large deviations can also be used to give effective bounds for the
point metric $d_P:= \max_m |\mathbb{P}(S_n=m) -e^{-\lambda}
\lambda^m/m!|$; they are not discussed here.

Results for $d_P$ were derived in Franken \cite{Franken64} but are
too complicated to be described here. Kruopis \cite{Kruopis86} gives
the estimate
\[
    d_P \le \min\left\{\frac{\sqrt{e}\theta}
    {\sqrt{\pi\lambda}(1-\theta)^{3/2}},
    \lambda_2  \right\},
\]
as well as
\[
    \sup_m\left|\mathbb{P}(S_n=m)-[z^m]P_2(z)\right|
    \le \frac{8\varpi}{3\pi}\lambda_3\min\left\{ \frac{1}
    {\lambda^2(1-\theta)^2},\frac43\right\}.
\]
Barbour and Jensen \cite{BJ89} derived an asymptotic expansion; see
also \cite{Barbour87}.

Asymptotically, as $\theta\to0$,
\[
    d_P \sim \frac{\theta}{2\sqrt{2\pi \lambda}},
\]
see Roos \cite{Roos95}, where he also derived a second-order
estimate for $d_P$, which was later refined in \cite{Roos99,Roos01}.
In particular,
\[
    d_P \le \left(\frac12\left(\frac3{2e}\right)^{3/2}
    +\frac{6-4\sqrt{\theta}}
    {3(1-\sqrt{\theta})^2}\,\sqrt{\theta}\right)
    \frac{\theta}{\sqrt{\lambda}}.
\]

A non-uniform bound was given in Neammanee
\cite{Neammanee03a,Neammanee03b} of the form
\[
    \left| \mathbb{P}(S_n=m) - e^{-\lambda}
    \frac{\lambda^m}{m!}\right| \le
    \min\left\{ m^{-1},\lambda^{-1}\right\}\lambda_2,
\]
whenever $\lambda\le 1$.

\end{document}